\DeclareSymbolFont{cyrletters}{OT2}{wncyr}{m}{n}
\DeclareMathSymbol{\Sha}{\mathalpha}{cyrletters}{"58}
\newcommand{\Z}{\ensuremath{{\mathbb{Z}}}\xspace}
\newcommand{\F}{\ensuremath{{\mathbb{F}}}}
\newcommand{\E}{\ensuremath{{\mathbb{E}}}}
\newcommand{\ra}{\rightarrow}
\newcommand\Hom{\operatorname{Hom}}
\newcommand\Aut{\operatorname{Aut}}
\newcommand\Out{\operatorname{Out}}
\newcommand\im{\operatorname{im}}
\newcommand\Sur{\operatorname{Sur}}
\newcommand\isom{\simeq}
\newcommand\sub{\subset}
\newcommand\GL{\operatorname{GL}}
\newcommand\bq{\begin{equation}}
\newcommand\eq{\end{equation}}
\numberwithin{equation}{section}
\newtheorem{proposition}[equation]{Proposition}
\newtheorem{theorem}[equation]{Theorem}
\newtheorem{corollary}[equation]{Corollary}
\newtheorem{example}[equation]{Example}
\newtheorem{lemma}[equation]{Lemma}
\theoremstyle{remark}
\newtheorem{remark}[equation]{Remark}
\newenvironment{definition}{\vspace{2 ex}{\noindent{\bf Definition. }}}{\vspace{2 ex}}
\newtheorem{nts}{Note to self}
\newcommand{\melanie}[1]{{\color{blue} \sf $\clubsuit\clubsuit\clubsuit$ Melanie: [#1]}}
\newcommand{\yuan}[1]{{\color{green!60!black} \sf $\clubsuit\clubsuit\clubsuit$ Yuan: [#1]}}
\newcommand\hF{\hat{F}}
\newcommand{\Prob}{\operatorname{Prob}}
\newcommand\Cen{\mathbf{C}}
\newcommand\bS{\bar{S}}
\newcommand\bT{\bar{T}}
\newcommand\Inn{\operatorname{Inn}}
\newcommand\CF{\mathcal{CF}}
\title{The free group on $n$ generators modulo $n+u$ random relations as $n$ goes to infinity}
\author{Yuan Liu}
\address{Department of Mathematics\\
University of Wisconsin-Madison \\ 480 Lincoln Drive \\
Madison, WI 53705 USA}  
\email{yliu@math.wisc.edu}
\author{Melanie Matchett Wood}
\address{Department of Mathematics\\
University of Wisconsin-Madison \\ 480 Lincoln Drive \\
Madison, WI 53705 USA\\
and
American Institute of Mathematics\\600 East Brokaw Road\\
San Jose, CA 95112 USA}  
\email{mmwood@math.wisc.edu}
\begin{document}

\begin{abstract}
We show that, as $n$ goes to infinity, the free group on $n$ generators, modulo $n+u$ random relations,  converges to a random group that we give explicitly.  This random group is a non-abelian version of the random abelian groups that feature in the Cohen-Lenstra heuristics.  For each $n$, these random groups belong to the few relator model in the Gromov model of random groups.
\end{abstract}

%Key words: Cohen-Lenstra heuristics, Gromov model, random groups, few relator model, balanced groups

\maketitle
\vspace{-36pt}
\section{Introduction}\label{S:intro}
For an integer $u$ and positive integers $n$, we study the random group given by the free group $F_n$ on $n$ generators modulo $n+u$ random relations.  In particular we find that these random groups have a nice limiting behavior as $n\ra\infty$ and we explicitly describe the limiting random group.

There are two ways to take relations in a ``uniform'' way: 1)complete $F_n$ to the profinite free group $\hF_n$ on $n$ generators and take relations with respect for Haar measure, or 2)take relations from $F_n$ uniformly among words up to length $\ell$ and then let $\ell\ra\infty$.  In Proposition~\ref{P:same}, we show that the random groups obtained from the second method weakly converge, as $\ell\ra\infty$, to the random groups obtained from the first method. 

For a positive integer $n$, let $\hF_n$ be the profinite free group on $n$ generators.  For an integer $u$,  we define the random group $X_{u,n}$ by taking the quotient of $\hF_n$ by (the closed, normal subgroup generated by) $n+u$ independent random generators, taken from Haar measure on $\hF_n$.  We need to define a topology to make precise the convergence of  $X_{u,n}$ as $n\ra\infty$.

Let $S$ be a set of (isomorphism classes of) finite groups.  Let $\bar{S}$ be the smallest set of groups containing $S$ that is closed under taking quotients, subgroups, and finite direct products.  For a profinite group $G$, we write $G^{\bar{S}}$ for its pro-$\bar{S}$ completion.  
\begin{comment}
In the free group in countably many generators, we let $W_S$ be the elements that are in the kernel of every map to an element of $S$; i.e. $W_S$ is the set of words in formal variables and their inverses that vanish for every specialization of the variables to every group in $S$.  For a profinite group $G$, we write $Q_S(G)$ for the quotient of $G$ by (the closed, normal subgroup generated by) all specializations of words in $W_S$ to $G$ \melanie{in fact, this should already be a normal subgroup}.  For a positive integer $N$, we write $Q_N(G)$ for $Q_{S_N}(G)$, where $S_N$ is the set of all groups of order at most $N$.
\end{comment}
We consider the set $\mathcal{P}$ of isomorphism classes of profinite groups $G$ such that $G^{\bar{S}}$ is finite for all finite sets $S$ of finite groups.  All finitely generated profinite groups are in $\mathcal{P}$ %(see Lemma~\ref{L:sfinite}) 
and all groups in 
$\mathcal{P}$ are small in the sense of \cite[Section 16.10]{Fried2008}.
We define a topology on $\mathcal{P}$ in which the basic opens are, for each finite set $S$ of finite groups and finite group $H$, the sets $U_{S,H}:=\{G \mid G^{\bar{S}} \isom H\}$.

\begin{theorem}\label{T:Main}
Let $u$ be an integer.  Then there is a probability measure $\mu_u$ on $\mathcal{P}$ for the $\sigma$-algebra of Borel sets such that
as $n\ra\infty$, the distributions of $X_{u,n}$ weakly converge to $\mu_u$.
\end{theorem}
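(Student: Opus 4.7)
The strategy is the moment method: compute the surjection moments of $X_{u,n}$, show they converge as $n \to \infty$ to a simple limit, and then invoke a moment-to-measure theorem to conclude weak convergence to a unique measure $\mu_u$. The topology on $\mathcal{P}$ is totally disconnected: for each finite set $S$ of finite groups, the sets $U_{S,H}$ (as $H$ ranges over the finitely many isomorphism classes of groups of the form $G^{\bar{S}}$) partition $\mathcal{P}$, so each $U_{S,H}$ is clopen. Weak convergence of $X_{u,n}$ to $\mu_u$ therefore reduces to showing $\Pr(X_{u,n}^{\bar{S}} \isom H) \to \mu_u(U_{S,H})$ for every such $S$ and $H$.

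For each finite group $H$, I compute the first surjection moment. A continuous surjection $X_{u,n} \twoheadrightarrow H$ is the same as a continuous surjection $\phi \colon \hat{F}_n \twoheadrightarrow H$ killing each of the $n+u$ Haar-random relators $r_i$. Because the $r_i$ are independent and each $\phi(r_i)$ is uniform on $H$ (Haar measure pushes forward to the uniform measure on every finite quotient),
\[
\E[\#\Sur(X_{u,n}, H)] \,=\, \#\Sur(\hat{F}_n, H) \cdot |H|^{-(n+u)}.
\]
Here $\#\Sur(\hat{F}_n, H) = \#\Sur(F_n, H)$ equals the number of generating $n$-tuples of $H$; an inclusion-exclusion over maximal subgroups shows this is $|H|^n(1 - O(c_H^n))$ for some $c_H < 1$. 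Hence $\E[\#\Sur(X_{u,n}, H)] \to |H|^{-u}$ as $n \to \infty$. The identical argument applied to a product $H_1 \times \cdots \times H_k$ in place of $H$ gives convergence of all joint surjection moments to $|H_1 \times \cdots \times H_k|^{-u}$.

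To finish, I invoke a moment-to-measure theorem for probability measures on $\mathcal{P}$ of the sort developed for random groups in the Cohen--Lenstra setting: since the limit moments grow only polynomially in $|H|$, there is a unique probability measure $\mu_u$ on $\mathcal{P}$ with $\int \#\Sur(G, H)\, d\mu_u(G) = |H|^{-u}$ for every finite $H$, and moment convergence of a sequence of $\mathcal{P}$-valued random groups implies weak convergence to $\mu_u$. The main obstacle is verifying the hypotheses of this moment theorem in the present non-abelian profinite setting; concretely one must rule out any escape of mass, that is, show that for each finite $S$ the sequence of distributions of $X_{u,n}^{\bar{S}}$ on the (possibly infinite) set of isomorphism classes $H$ with $H = H^{\bar{S}}$ is tight in $n$. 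The uniform boundedness of the joint moments established above is precisely what yields this tightness and lets us identify the limiting subprobability measure as a genuine probability measure on $\mathcal{P}$.
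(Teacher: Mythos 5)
Your moment computation is correct and easy --- indeed the paper uses the bound $\E[\#\Sur(X_{u,n},H)]=|\Sur(\hat F_n,H)|\,|H|^{-(n+u)}\le |H|^{-u}$ at a couple of points --- but the step where you ``invoke a moment-to-measure theorem for probability measures on $\mathcal{P}$'' is precisely the part that does not exist in this setting. The known moment-determinacy results (Heath-Brown, Fouvry--Kl\"uners, Ellenberg--Venkatesh--Westerland, Wood) are for abelian or otherwise restricted classes of groups, and the authors explicitly list ``Are these measures determined by their moments?'' as an \emph{open question} in the introduction. So your plan assumes away the central difficulty: there is no theorem you can cite that converts convergence of $\E[\#\Sur(X_{u,n},H)]$ to $|H|^{-u}$ into the existence of a limiting probability measure, let alone a unique one, on the space of profinite groups considered here.

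The tightness claim at the end is also not justified by what you proved. From the first moment bound one gets $\Prob(X_{u,n}^{\bS}\isom H)\le |H|^{-u}|\Aut(H)|^{-1}$, but tightness requires that $\sum_{H \text{ level } S}|H|^{-u}|\Aut(H)|^{-1}$ be small outside a finite set of $H$, and this sum has no reason to converge in general (for $u\le 0$ the individual terms do not even tend to $0$, yet the theorem is claimed for all integers $u$). The paper's route is entirely different: it computes $\Prob(X_{u,n}^{\bS}\isom H)$ \emph{exactly} as a product over irreducible factors (Theorem~\ref{T:calc}), via the fundamental short exact sequence, multiplicities of irreducible $F$-groups, and M\"obius inversion on a poset of $H$-extensions; it then rules out escape of mass (Theorem~\ref{T:totall}) by an induction on the level $\ell$, using chief factor pairs to show that whenever $\Prob(X_{u,n}^{\bS_\ell}\isom H)$ is nonzero it is at least $c(u,\ell,\widetilde H)\cdot\tfrac12|\Aut(H)|^{-1}|H|^{-u}$ uniformly in $n$, which bootstraps (since the probabilities sum to at most $1$) into a summable dominating function for groups $H$ lying over a fixed $\widetilde H=H^{\bS_{\ell-1}}$; countable additivity and Carath\'eodory then give $\mu_u$, and weak convergence follows from Fatou. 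You would need to either reproduce that analysis or prove a genuinely new moment-to-measure theorem; as written, the proposal has a gap at its load-bearing step.
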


We give these $\mu_u$ explicitly in fact.  See Equation~\eqref{E:limval} for a formula for $\mu_u$ on each basic open, and see Section~\ref{S:examples} for several other interesting examples of the values of these measures.  In fact, we prove in Theorem~\ref{T:calcinf} a stronger form of convergence than weak convergence, which, in particular, tell us the measure of any  finite group.  In particular, we have
\begin{equation}\label{E:trivialgp}
\mu_u(\textrm{trivial group})=\prod_{\substack{G \textrm{ finite simple}\\\textrm{abelian group}}}
\prod_{i=u+1}^{\infty} (1-|G|^{-i})
 \prod_{\substack{G \textrm{ finite simple}\\\textrm{non-abelian group}}} e^{-|\Aut(G)|^{-1}|G|^{-u}},
\end{equation}
which is  $\approx.4357$ when $u=1$.
The abelian group version of this problem has been well-studied, as the limiting groups when $u=0,1$ are the random groups of the Cohen-Lenstra heuristics.  The first factor above, as a product over primes, is very familiar from the random groups of the Cohen-Lenstra heuristics, but here it naturally appears as part of a product over all finite simple groups.

Cohen and Lenstra \cite{Cohen1984} defined certain random abelian groups that they predicted gave the distribution of class groups of random quadratic fields.  Friedman and Washington \cite{Friedman1989} later realized that these random abelian groups arose as the limits of cokernels of random matrices, which is just a rewording of the abelianization of our construction above.  These random abelian groups are universal, in the sense that, as $n\ra\infty$, taking $\Z^n$ modulo almost any collection of $n+u$ independent relations will give these same random abelian groups, even if the relations are taken from strange and lopsided distributions \cite{Wood2015a}.

One motivation for our work is to develop a non-abelian version of the random abelian groups of Cohen and Lenstra, in order to eventually be able to model non-abelian versions of class groups of random number fields.  Boston, Bush, and Hajir \cite{Boston2016a} have defined random pro-$p$ groups that they conjecture model the pro-$p$ generalizations of class groups of random imaginary quadratic fields.  In their definition, they were able to use special properties of $p$-groups to give a definition that avoids the limit as $n\ra\infty$ that we study above (or rather, reduces the question of the limit as $n\ra\infty$ to the abelian case, which was already understood).

There is a large body of work on the Gromov, or density, model of random groups (see \cite{Ollivier2005} for an excellent introduction).  In this model, one takes $F_n$ modulo $r(\ell)$ random relations uniform among words of length $\ell$, and studies the behavior as $\ell\ra\infty$. 
When $r(\ell)$ grows like $(2n-1)^{d\ell}$ this is called the density $d$ model.  
There has been a great amount of work to understand, as $\ell\ra\infty$, what properties hold asymptotically almost surely for these groups (e.g see \cite{Ollivier2005, Ollivier2010} for an overview and \cite{Calegari2015,Kotowski2013,Mackay2016,Ollivier2011} for some more recent examples).  Our $X_{u,n}$ are limits as $\ell\ra\infty$ of density $0$ models of these random groups. 
In fact, slightly different density $0$ models introduced by Gromov \cite{Gromov1987} and Arzhantseva and Ol'shanskii \cite{Arzhantseva1996} were predecessors to the work on the density model for arbitrary $d$ and are also the subject of a large body of work.
 However, the emphasis of our work is different from much of the previous work on these models of random groups.  That work has often emphasized a random group with given generators, and we consider only the isomorphism class of the group and focus on the convergence to a limiting random variable as the number of generators goes to infinity.  Properties that hold asymptotically almost surely as $\ell\ra\infty$ may not hold of the limiting random variable.  For example, in our topology, any Gromov random group with $r(\ell)\ra\infty$ weakly converges to the trivial group, yet at low enough density these groups are asymptotically almost surely not trivial (see Proposition~\ref{P:alltrivial} and Remark~\ref{R:alltrivial}).  Our topology is  aimed at understanding finite quotients of groups, and is rather different than the topology due to Chabauty \cite{Chabauty1950} and Grigorchuk \cite{Grigorchuk1984} on the space of marked groups that emphasizes the geometry of the Cayley graphs but isn't well behaved on isomorphism classes of groups.

The previous work with the closest emphasis to ours is that of Dunfield and Thurston \cite{Dunfield2006}.  They studied $F_n$ modulo $r$ random relations (with both methods described above of taking relations)  in order to contrast those random groups with random $3$-manifold groups.  Their main consideration was the probability that these random groups (for fixed $n$ and $r$) had a quotient map to a fixed finite group.  They do observe \cite[Theorem 3.10]{Dunfield2006} that for a fixed non-abelian finite simple group $G$, the distribution of the number of quotient maps to $G$ has a Poisson limiting behavior as $n\ra \infty$; this is the first glimpse of the nice limiting behavior as $n\ra\infty$ that we study in this paper.

Jarden and Lubotzky \cite{Jarden2006} studied the normal subgroup of $\hF_n$ generated by a fixed number of random elements, in particular proving that when it is infinite index that it is almost always the free profinite group on countably many generators.  Our work here complements theirs, as they have determined the structure of the random normal subgroup and we determine the structure of the quotient by this random normal subgroup.

The bulk of this paper is devoted to showing the existence of the measure $\mu_u$ of Theorem~\ref{T:Main}.  Let $\mu_{u,n}$ be the distribution of our random group $X_{u,n}$.  Since the basic opens in our topology are also closed, it is clear that if $\mu_u$ exists, then for any basic open $U$ we have $\mu_u(U)=\lim_{n\ra\infty} \mu_{u,n} (U).$
The argument for the existence of $\mu_u$ breaks into two major parts.  The first part is to show the limit
$\lim_{n\ra\infty} \mu_{u,n} (U)$ exists.  The second part is to show that these measures on basic opens define a countably additive measure.  After giving some notation and basic definitions in Section~\ref{S:Notations}, we will give the values of $\mu_u$ on basic opens in Section~\ref{S:definemu} for easy reference.   Then in Section~\ref{S:setup} we set up the strategy for proving 
$\lim_{n\ra\infty} \mu_{u,n} (U)$ exists, which is entirely group theoretical.  This argument will take us through Section~\ref{S:basics}.  It is easy to express 
$\mu_{u,n}$ in group theory terms involving $\hat{F}_n$.  However, such expressions do not allow one to take a limit as $n\ra\infty$, and so the main challenge is to extract $\hat{F}_n$ from the description of the probabilities so that they only involve the number $n$ and  group theoretical quantities that do not depend on $n$.  This requires several steps.  In Section~\ref{S:genprob}, we express the probabilities in terms of multiplicities of certain groups appearing in $\hat{F}_n$.  In Section~\ref{S:factors}, we bound what possible groups can have positive multiplicities.  In Section~\ref{S:countqt}, we relate the multiplicities to a count of certain surjections, and  finally in Section~\ref{S:basics} we count these surjections in another way that eliminates $\hat{F}_n$ from our description of the probabilities.

The next challenge is to show the countable additivity of the $\mu_u$ that we have then defined on basic opens. It follows from Fatou's lemma that for a finite set $S$ of finite groups,
$$
\sum_{H \text{ is finite}} \lim_{n\ra\infty} \mu_{u,n} (U_{S, H}) \leq 1. 
$$
However, a priori, this inequality may be strict.  In the limit as $n$ goes to infinity there could be escape of mass.
To show that this does not occur, we require bounds on the $\mu_{u,n} (U_{S, H})$ that are sufficiently uniform in $n$.  The difficultly is that our group theoretical expressions do not easily lend themselves to the kind of bounds useful for an analytic argument.  We obtain the necessary uniformity by considering a notion of \emph{chief factor pairs}, which generalizes the notion of a chief factor of a group to also include the conjugation action on the chief factor.  We are able to bound the size of the outer action of conjugation on chief factors for a given $S$ in Section~\ref{S:factors}, which then, combined with an induction on $S$, gives us the uniformity necessary to show in Section~\ref{S:cntadd} that the above inequality is actually an equality.  That is the heart of the proof of countable additivity, which we show in Section~\ref{S:cntadd}.

Once we have established the existence of the measure $\mu_u$ with the desired measure on basic opens, Theorem~\ref{T:Main} follows immediately in Section~\ref{S:mainproof}.  In Section~\ref{S:arbitraryS}, we give the measures of sets of the form $\{X \in\mathcal{P} \mid X^{\bS}\simeq H^{\bS}\}$ for arbitrary sets $S$ of finite groups, and see that $\mu_u$ and $\lim_{n\ra\infty} \mu_{u,n}$ agree there, giving a stronger convergence than the weak convergence of Theorem~\ref{T:Main}.  There, one inequality is automatic, and we then we argue that we either have the necessary uniformity to get equality, or that the larger probability is $0$, which also gives equality.
The result of Section~\ref{S:arbitraryS} then allows us to compute measures of many different Borel sets, and in Section~\ref{S:examples}, we give many examples including the trivial group, infinite groups, and distributions of the abelianization and pro-nilpotent quotient.  In Section~\ref{S:prob0}, we see that the measures $\mu_u$ give positive measure to any basic open where groups can be generated by $u$ more relations than generators. Finally, in Section~\ref{S:non-profinite}, we compare the profinite model used in this paper to the discrete group model described above. 

This is the beginning of investigation into these random groups, and there are many further questions we would like to understand.  Are these measures universal in the sense of \cite{Wood2015a}, i.e. would we still get $\mu_u$ as $n\ra\infty$ even if we took our relations from a different measure?  Are these measures determined by their moments, which in \cite[Lemma 18]{Heath-Brown1994-1}, \cite[Section 4.2]{Fouvry2006}, \cite[Lemma 8.2]{Ellenberg2016}, \cite[Theorem 8.3]{Wood2017}, and \cite[Theorem 1.4]{Boston2017} has been an important tool to identify analogous random groups?  What is the measure of the set of all infinite groups when $u\geq 0$ (see Example~\ref{Ex:inf}, and note by \cite{Jarden2006} this implies the normal subgroup generated by the relations is free on countably many generators with probability $1$)?  What is the measure of the set of finitely generated groups, and of finitely presented groups?  Do the $\mu_{u,n}$ converge strongly to $\mu_u$?  Besides their inherent interest, many of these questions have implications for the possible connections to number theory described above.

\section{Notation and basic group theoretical definitions}\label{S:Notations}

\subsection{Notation}\label{SS:2.1}
In Section~\ref{S:index}, we give a list of symbols used in the paper in more than one section for ease of reference.

Whenever we take a quotient by relations, we always mean by the closed, normal subgroup generated by those relations.  For elements $x_1,\dots$ of a group $G$, we write $[x_1,\dots]_G$ for the closed normal subgroup of $G$ generated by $x_1,\dots$.

We write $G\isom H$ to mean that $G$ and $H$ are isomorphic.  For profinite groups, we always mean isomorphic as profinite groups.  For two groups $G$ and $H$, we write $G=H$ when there is an obvious map from one of $G$ or $H$ to the other (e.g. when $H$ is defined as a quotient or subgroup of $G$) and that map is an isomorphism.

For a group $G$, we write $G^j$ for the direct product of $j$ copies of $G$. If $H$ is a subgroup of $G$, then we denote the centralizer of $H$ by $\Cen_G(H)$.

When we say a set of finite groups, we always mean a set of isomorphism classes of finite groups.
\subsection{$F$-groups}\label{SS:2.2}

If $F$ is a group, an \emph{$F$-group} is a group $G$ with an action of $F$. 
%not sure if I should use this condition:
%such that the image of $F\ra\Aut(G)$ includes $\operatorname{Inn}(G)$.  
A morphism of $F$-groups is a group homomorphism that respects the $F$-action.    An $F$-subgroup is a subgroup $G$ such that $f(G)=G$ for all $f\in F$, and an $F$-quotient is a group quotient homomorphism that respects the $F$-action. 
%If use extra condition:
%Note that every $F$-subgroup is automatically normal.
 An \emph{irreducible $F$-group} is an $F$-group with no normal 
 $F$-subgroups except the trivial subgroup and the group itself. 
  We write $\Hom_F(G_1,G_2)$ for the $F$-group morphisms from $G_1$ to $G_2$ and $h_F(G):=|\Hom_F(G,G)|$. We write $\Sur_F(G_1, G_2)$ for the $F$-group surjections from $G_1$ to $G_2$, and $\Aut_F(G)$ for the $F$-group automorphisms of $G$.
 For a sequence $x_k$ in an $F$-group $G$, let $[x_1,\dots ]_F$ be the closed normal  $F$-subgroup of $G$  generated by the $x_k$.

\subsection{$H$-extensions} \label{SS:2.3}

For a group $H$, an \emph{$H$-extension} is a group $E$ with a surjective morphism $\pi: E\ra H.$  
If $\pi:E\ra H$ and $\pi':E'\ra H$ are $H$-extensions, a morphism from $(E,\pi)$ to $(E',\pi')$ is a group homomorphism $f:E\ra E'$ such that $\pi=\pi' \circ f$.  If $\pi:E\ra H$ and $\pi':E'\ra H$ are $H$-extensions, we write $\Sur_H(\pi,\pi')$ for the set of surjective morphisms from $(G,\pi)$ to $(G',\pi')$.
For an $H$-extension $E$, we write $\Aut_H(E,\pi)$ for the automorphisms of $(E,\pi)$ as an $H$-extension.  
If $(E,\pi)$ is an $H$-extension, a sub-$H$-extension is a subgroup $E'$ of $E$ with $\pi|_{E'}$, such that $\pi|_{E'}$ is surjective.
Note that when $\ker \pi$ is abelian, it is an $H$-group under conjugation in $E$.

\subsection{Pro-$\bS$ completions and level $S$ groups}\label{SS:proSdef}

Given a set $S$ of finite groups, we let $\bar{S}$ denote the smallest set of groups containing $S$ that is closed under taking quotients, subgroups and finite direct products. (This is called the variety of groups generated by $S$.) Given a profinite group $G$, we write $G^{\bS}$ for its pro-$\bS$ completion, which is defined as 
$$G^{\bS} = \varprojlim_{M} G/M,$$
where the inverse limit is taken over all closed normal subgroups $M$ of $G$ such that $G/M\in \bS$.

\begin{definition}
 For a set $S$ of finite groups, we say that a profinite group $G$ is \emph{level $S$} if $G \in \bS$. Also, for a positive integer $\ell$, let $S_\ell$ be the set consisting of all groups whose order is less than or equal to $\ell$. Then we say $G$ is \emph{level $\ell$} if $G\in \bS_\ell$.  Note that for $G\in\mathcal{P}$ we have that $G$ is level $S$ if and only if $G=G^{\bS}$.
\end{definition}

\section{Definition of $\mu_u$}\label{S:definemu}

For integers $n\geq 1$ and $u>-n$, recall that $X_{u,n}$ is the random group defined by taking quotient of the free profinite group $\hat{F}_n$ on $n$ generators by $n+u$ independent random relations that are taken from the Haar measure on $\hat{F}_n$. For finite set  $S$ of finite groups and finite group $H$, let $U_{S,H}:=\{X\in \mathcal{P} \,|\, X^{\bS}\isom H \}$
(where $\mathcal{P}$ is the set of isomorphism classes of profinite groups $G$ such that the pro-$\bar{S}$ completion $G^{\bar{S}}$ is finite for all finite sets $S$ of finite groups).
 We have a measure $\mu_{u,n}$ on the $\sigma$-algebra of Borel sets of $\mathcal{P}$ such that $\mu_{u,n}(A)=\Prob(X_{u,n}\in A)$.
We will define a measure $\mu_u$, for each integer $u$, at first as a measure on the algebra $\mathcal{A}$ of sets generated by the $U_{S,H}$.  For $A\in \mathcal{A}$, we define
\begin{equation}\label{E:defmu}
\mu_u(A):=\lim_{n\ra\infty} \mu_{u,n}(A).
\end{equation}
We will below establish that 1) this limit exists when $A=U_{S,H}$ (see Theorem~\ref{T:calc}, and Equation~\eqref{E:limval} just below, in which we give the value of the limit), and hence for any $A\in \mathcal{A}$ since the limit is compatible with finite sums and subtraction from $1$; and 2) $\mu_u$ is countably additive on $\mathcal{A}$ (see Theorem~\ref{T:countadd}).  These two results represent the bulk of the work of the paper. Then by Carath\'{e}odory's extension theorem, it follows that $\mu_u$ extends uniquely to a probability measure on $\mathcal{P}$.

\subsection{Value of $\mu_u$ on basic open sets}\label{SS:basicq}

Given a finite group $H$, let $\mathcal{A}_H$ be the set of isomorphism classes of non-trivial finite abelian irreducible $H$-groups.    Let $\mathcal{N}$ be the set of isomorphism classes of finite groups that are isomorphic to $G^j$ for some finite simple non-abelian group $G$ and a positive integer $j$.
Let $S$ be a set of finite groups, and $H$ a finite level $S$ group.  
For $G\in \mathcal{A}_H$, we define the quantity
$$
\lambda(S,H,G):=(h_H(G)-1) 
\sum_{\substack{\textrm{isom. classes of $H$-extensions $(E,\pi)$}\\ \textrm{such that $\ker \pi \simeq G$ as  $H$-groups,} \\
\textrm{and $E$ is level $S$}  }}
\frac{1}{|\Aut_H(E,\pi)|}.
$$
We will see in Remark~\ref{R:lamint} that for $G\in \mathcal{A}_H$, the number $\lambda(S,H,G)$ is an integer power of $h_H(G)$.
If $G\in \mathcal{N}$, we define
$$
\lambda(S,H,G):=\sum_{\substack{\textrm{isom. classes of $H$-extensions $(E,\pi)$}\\ \textrm{such that $\ker \pi \simeq G^j$ as groups, }\\
\textrm{$\ker \pi$ irred. $E$-group,} \\
\textrm{and $E$ is level $S$} }} 
\frac{1}{|\Aut_H(E,\pi)|}.
$$
The definitions are not quite parallel in the abelian and non-abelian cases, but this is unavoidable given the different behavior of abelian and non-abelian simple groups.

It will follow from Theorem~\ref{T:calc} below that for a finite set $S$ of finite groups and a finite level $S$ group $H$, we have
\begin{align}\label{E:limval}
\mu_u(U_{S,H})=&\frac{1}{|\Aut(H)||H|^{u}}\prod_{\substack{G \in \mathcal{A}_H}} 
\prod_{i=0}^{\infty} (1-\lambda(S,H,G)
\frac{h_H(G)^{-i-1}}{ |G|^{{u}}})
 & \prod_{\substack{ G\in \mathcal{N}}}  e^{-|G|^{-u}\lambda(S,H,G)}.
\end{align}
Theorem~\ref{T:calcinf} gives the analogous result for an infinite set $S$.
We will see in Section~\ref{S:factors} that for finite $S$ only finitely many elements of $\mathcal{A}_H$ and $\mathcal{N}$ contribute non-trivially to this product.

\section{Setup and organization of the proofs}\label{S:setup}

The proof of Equation~\eqref{E:limval} will be established from Section~\ref{S:genprob} to Section~\ref{S:basics}, which are dominated by group theoretical methods. Here we outline the proof  for the reader's convenience.

Suppose $n$ is a positive integer, $S$ is a finite set of finite groups, and $H$ is a finite level $S$ group. Then $(\hat{F}_n)^{\bS}$ is a finite group \cite[Cor. 15.72]{Neumann1967} and $(X_{u,n})^{\bS}$ has the same distribution as the quotient of $(\hat{F}_n)^{\bS}$ by $n+u$ independent, uniform random relations $r_1, \cdots r_{n+u}$ from $(\hat{F}_n)^{\bS}$. By the definition of $\mu_u$, we have that 
$$\mu_u(U_{S,H})= \lim_{n\to \infty} \Prob((\hat{F}_n)^{\bS}/[r_1,\cdots, r_{n+u}]_{(\hat{F}_n)^{\bS}}\simeq H).$$

%\melanie{I don't think these should be here.  We can move them to where they were later, and just refer forward to the lemma.}

We consider a normal subgroup $N$ of $(\hat{F}_n)^{\bS}$ with an isomorphism $(\hat{F}_n)^{\bS}/N \simeq H$.  Let $M$ be the intersection of all maximal proper $(\hat{F}_n)^{\bS}$-normal subgroups of $N$. We denote $F=(\hat{F}_n)^{\bS}/M$ and $R=N/M$. Then for independent, uniform random elements $r_1, \cdots, r_{n+u}$ of $(\hat{F}_n)^{\bS}$, we have that $[r_1, \cdots, r_{n+u}]_{(\hat{F}_n)^{\bS}}=N$ if and only if $R$ is the normal subgroup generated by the images of $r_1, \cdots, r_{n+u}$ in $F$. Indeed, the ``only if'' direction is clear; and if $[r_1, \cdots, r_{n+u}]_{(\hat{F}_n)^{\bS}}/M = R$, then $[r_1, \cdots, r_{n+u}]_{(\hat{F}_n)^{\bS}}=N$ since $[r_1, \cdots, r_{n+u}]_{(\hat{F}_n)^{\bS}}$ being contained in a proper maximal $(\hat{F}_n)^{\bS}$-normal subgroup of $N$ would imply that its image is contained in a proper maximal $F$-normal subgroup of $R$. 

Any two surjections from $(\hat{F}_n)^{\bS}$ to $H$ are isomorphic as $H$-extensions \cite[Proposition 2.2]{Lubotzky2001}. Thus, the short exact sequence 
\begin{equation}\label{E:fundSES}
1 \ra R \ra F \ra H \ra 1
\end{equation}
does not depend (up to isomorphisms of $F$ as an $H$-extension) on the choice of the normal subgroup $N$. 

\begin{definition}
  Given a finite set $S$ of finite groups, a positive integer $n$ and a finite level $S$ group $H$, the short exact sequence defined in Equation~\eqref{E:fundSES} is called \emph{the fundamental short exact sequence associated to $S$, $n$ and $H$}.
\end{definition}

By the above arguments, $\Prob((\hat{F}_n)^{\bS}/[r_1, \cdots, r_{n+u}]_{(\hat{F}_n)^{\bS}}\simeq H)$ equals  the number of normal subgroups $N$ of $(\hat{F}_n)^{\bS}$ with $(\hat{F}_n)^{\bS}/N\simeq H$ times the probability that independent, uniform random elements $x_1, \cdots, x_{n+u}\in F$ normally generate $R$. Note that the number of such normal subgroups $N$ is $|\Sur((\hat{F}_n)^{\bS}, H)|/|\Aut(H)|$, and there is a one-to-one correspondence between $\Sur((\hat{F}_n)^{\bS}, H)$ and $\Sur(\hat{F}_n, H)$. It follows that
\begin{equation}\label{E:setup}
\Prob((\hat{F}_n)^{\bS}/[r_1, \cdots, r_{n+u}]_{(\hat{F}_n)^{\bS}}\simeq H) = \frac{|\Sur(\hat{F}_n, H)|}{|\Aut(H)|} \Prob([x_1, \cdots, x_{n+u}]_F = R).
\end{equation}

It therefore suffices to compute $\Prob([x_1, \cdots, x_{n+u}]_F=R)$. 
Note that $R$ is an $F$-group under the conjugation action. It will follow from Lemma~\ref{L:irred} that $R$ is a direct product of irreducible $F$-groups. 
Theorem~\ref{T:probfrommult} will prove the formula for $\Prob([x_1, \cdots, x_{n+u}]_F=R)$  for $F$ and $R$ where $R$ is a direct product of irreducible $F$-groups, in terms of the multiplicities of the various irreducible $F$-group factors of $R$.  In Section~\ref{S:factors}, we will give some criteria for which irreducible $F$-groups can appear in $R$. Then in Section~\ref{S:countqt}, we will relate the multiplicities of irreducible factors in $R$ to the number of normal subgroups of $R$ with specified quotients. In Section~\ref{S:basics}, we will count these normal subgroups of $R$ in another way in order to finally give an explicit formula for $\mu_{u,n}(U_{S,H})$.  This formula will be explicit enough that we can easily take the limit as $n\ra\infty$, giving
Equation~\eqref{E:limval}.

\section{Generating probabilities for products of irreducible $F$-groups} \label{S:genprob}

Throughout this section, we let $n\geq 1$ and $u>-n$ be integers, $F$ a group, and $R$ a finite product of finite irreducible $F$-groups. (We don't require $R$ to be a subgroup of $F$.) The goal of this section is to prove the following theorem which gives the probability that the normal $F$-subgroup generated by $n+u$ random elements of $R$ is the whole group.
 
\begin{theorem}\label{T:probfrommult}
Let $F$ be a group and $G_i$ be finite irreducible $F$-groups for $i=1,\dots, k$ such that for $i\ne j$, we have that $G_i$ and $G_j$ are not isomorphic $F$-groups, and let $m_i$ be non-negative integers. 
   Let $R=\prod_{i=1}^k G_i^{m_i}$.
Then
$$
\Prob( [x_1,\dots, x_{n+u} ]_F=R)=\prod_{\substack{1\leq i\leq k\\ G_i \textrm{ abelian}}} 
\prod_{j=0}^{m_i-1} (1-h_F(G_i)^j |G_i|^{-n-u})
 \prod_{\substack{1\leq i\leq k\\ G_i \textrm{ non-abelian}}}  (1-|G_i|^{-n-u})^{m_i}
$$ 
where the $x_i$ are independent, uniform random elements of $R$.
\end{theorem}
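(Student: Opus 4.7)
The plan is to prove Theorem~\ref{T:probfrommult} in two main stages: first reduce to a single irreducible factor by a Goursat-style argument that uses the pairwise non-isomorphism of the $G_i$, and then compute the single-factor probability separately in the abelian and non-abelian cases. For the reduction, I would show that $[x_1,\dots,x_{n+u}]_F = R$ if and only if, for every $i$, the projections of the $x_l$ to $G_i^{m_i}$ normally $F$-generate $G_i^{m_i}$. The ``only if'' direction is automatic. For the converse, let $N$ be a normal $F$-subgroup of $R$ surjecting onto each $G_i^{m_i}$; writing $R = R' \times G_k^{m_k}$ with $R' = \prod_{i<k} G_i^{m_i}$, set $A = N \cap R'$ and $B = N \cap G_k^{m_k}$ and apply Goursat's lemma to realize $N/(A \times B)$ as the graph of an $F$-isomorphism $R'/A \isom G_k^{m_k}/B$. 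If $R'/A$ were non-trivial, it would admit an irreducible $F$-quotient $Q$ that appears as a composition factor of both $R'$ and $G_k^{m_k}$; by Jordan--H\"older for $F$-groups this forces $Q \isom G_i$ for some $i<k$ and simultaneously $Q \isom G_k$, contradicting the hypothesis. Hence $A = R'$ and $B = G_k^{m_k}$, so $N = R$, and induction on $k$ finishes the reduction. Since the $\pi_i(x_l)$ are jointly independent and uniform in $G_i^{m_i}$, the probability factors as $\prod_i \Prob([\pi_i(x_1),\dots,\pi_i(x_{n+u})]_F = G_i^{m_i})$.

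For an abelian irreducible $F$-group $G$, Schur's lemma together with Wedderburn's theorem makes $K := \End_F(G)$ a finite field with $|K| = h_F(G)$, and $G$ becomes a $K$-vector space. Using the identification $\Hom_F(G^m, G) \isom K^m$, the assignment sending $(x_1,\dots,x_{n+u}) \in (G^m)^{n+u}$ to the $K$-linear map $\Psi \colon K^m \to G^{n+u}$, $\Psi(\phi_1,\dots,\phi_m) = (\sum_i \phi_i(x_{l,i}))_{l=1}^{n+u}$, is a bijection onto $\Hom_K(K^m, G^{n+u})$ (both sides have size $|G|^{m(n+u)}$), so a uniform tuple corresponds to a uniform $K$-linear map. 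The condition that the $x_l$ normally $F$-generate $G^m$ is equivalent to no non-zero $\phi \in \Hom_F(G^m,G)$ killing all $x_l$, i.e.\ to $\Psi$ being injective. Choosing $\Psi(e_1),\dots,\Psi(e_m)$ in turn and requiring each to avoid the $K$-span of its predecessors, which has size $h_F(G)^j$ inside a space of size $|G|^{n+u}$, yields probability $\prod_{j=0}^{m-1}(1 - h_F(G)^j |G|^{-(n+u)})$.

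For a non-abelian irreducible $F$-group $G$, the center $Z(G)$ is a normal $F$-subgroup, hence trivial, and I would next show that the normal $F$-subgroups of $G^m$ are precisely the coordinate products $\prod_{j \in J} G^{(j)}$ for $J \subseteq \{1,\dots,m\}$. Given such an $N$, each projection $\pi_j(N)$ is $\{1\}$ or $G$; and when $\pi_j(N) = G$, a standard centerless commutator trick, where one conjugates an element of $N$ by a suitable element of $G^{(j)} \leq G^m$ so as to produce a non-trivial commutator supported on the $j$-th coordinate, forces $N \cap G^{(j)} = G$, after which one inducts. It follows that the $x_l$ normally $F$-generate $G^m$ iff for every $j$ their $j$-th projections are not all $1$; since any non-trivial element of the irreducible $F$-group $G$ already normally $F$-generates it, and the events are independent across $j$, this probability equals $(1 - |G|^{-(n+u)})^m$. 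Combining the two single-factor computations with the first-stage reduction delivers the product formula in the theorem. The main obstacle I anticipate is the Goursat step, since one must verify carefully that intermediate $F$-quotients cannot be matched across distinct irreducible factors; once that is secured, the rest of the argument is essentially linear algebra over $K$ in the abelian case and elementary subgroup analysis in the non-abelian case.
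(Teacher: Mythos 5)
Your overall architecture matches the paper's: reduce to one irreducible factor at a time, then treat abelian and non-abelian factors separately, arriving at exactly the stated product. But two of your three ingredients take genuinely different routes. For the reduction, the paper avoids Goursat and Jordan--H\"older entirely: it first shows (Lemmas~\ref{L:subprod} and \ref{L:preSchur}) that any normal $F$-subgroup of $\prod_i G_i^{m_i}$ is isomorphic, via projection, to a sub-product, and then that it literally splits as $\prod_i N_i$ with $N_i\le G_i^{m_i}$ (Lemma~\ref{L:subsplit}); your Goursat-plus-composition-factor argument reaches the same conclusion and is fine, though you should note that the relevant Jordan--H\"older statement is for groups with operators (operator set $F$ together with ambient conjugation), so that the factors of $\prod_{i<k}G_i^{m_i}$ really are the $G_i$ as $F$-groups. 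For the abelian count, your argument is the more conceptual one: identifying $\End_F(G)$ as a finite field $K$ of size $h_F(G)$ and computing the probability that a uniform $K$-linear map $K^m\to G^{n+u}$ is injective. The paper instead conditions coordinate-by-coordinate and counts the $|\Hom_F(G^k,G)|=h_F(G)^k$ ``bad'' tuples directly (Corollary~\ref{C:abprob}); the two computations are dual to one another and your version makes the appearance of $h_F(G)^j$ more transparent, at the cost of invoking Wedderburn.

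There is one step in your non-abelian case that does not work as written. You claim the commutator trick ``forces $N\cap G^{(j)}=G$.'' The trick only produces a \emph{non-trivial} element of $N\cap G^{(j)}$: it shows that the $j$-th component of $N\cap G^{(j)}$ is a normal subgroup of $G$ containing $[a,h]$ for all $h$, hence non-trivial because $Z(G)=1$. Since a non-abelian irreducible $F$-group $G$ is in general only characteristically simple (isomorphic to $\Gamma^r$ for a non-abelian simple $\Gamma$, possibly with $r>1$), a non-trivial normal subgroup of $G$ need not be all of $G$, so centerlessness alone does not close the argument. The repair is short: observe that $G^{(j)}$ is $F$-invariant for the coordinatewise action, so $N\cap G^{(j)}$ is a normal $F$-subgroup of the irreducible $F$-group $G^{(j)}$ and is therefore $1$ or $G^{(j)}$; alternatively, run the commutator argument on the simple factors $\Gamma$ of $G^m\simeq\Gamma^{rm}$ and then use that each $G$-block projection of $N$ is $1$ or $G$. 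The paper's Lemma~\ref{L:critgen} sidesteps the structure claim altogether by showing directly that if $N$ surjects onto each factor but $\pi_m$ factors through the projection to the first $m-1$ factors, then every element of $G$ is central, contradicting non-abelianness. With that one line added, your proof is complete and correct.
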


\begin{remark}\label{R:Gpow}
Given a finite abelian irreducible $F$-group $G$, if we let $\mathfrak{m}$ be maximal such that $G^\mathfrak{m}$ can be generated by one element as an $F$-group, then we have
$h_F(G)^\mathfrak{m}=|G|$.  This follows from Theorem~\ref{T:probfrommult} because if we take $m_i=\mathfrak{m}$, the probability that one element generates $G^{m_i}$ is positive, but if we take $m_i=\mathfrak{m}+1$ the probability is $0$. 
\end{remark}

We will build up to Theorem~\ref{T:probfrommult} through several lemmas.
First, we determine the structure of normal $F$-subgroups of products of  irreducible $F$-groups. 

\begin{lemma}\label{L:subprod}
If $G_i$ are irreducible $F$-groups and $N$ is an $F$-subgroup of $\prod_{i=1}^m G_i$ that projects to $1$ or $G_i$ in each factor, then there exists a subset $J\sub\{1,\dots,m\}$ such that the projection of $N$ to $\prod_{i\in J} G_i$ is an isomorphism.
\end{lemma}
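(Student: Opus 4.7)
The plan is to proceed by induction on $m$ after a standard reduction, and to use the irreducibility hypothesis by intersecting $N$ with a single factor.

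First I would reduce to the case where $N$ projects onto every $G_i$. If $N$ projects to $1$ in some factor $G_{i_0}$, then $N$ is contained in the subproduct $\prod_{i\ne i_0} G_i$, and the hypotheses on $N$ with respect to this smaller product are exactly the same; so by induction on $m$ it suffices to handle the case that every projection $N\to G_i$ is surjective, and then show that $J = \{1,\dots,m\}$ works, i.e.\ that $N$ equals the full product. The base cases $m=0,1$ are immediate.

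The heart of the argument is to identify $K := N \cap G_m$ (where $G_m$ is identified with the $m$-th factor inside $\prod_i G_i$) as a normal $F$-subgroup of $G_m$. It is $F$-stable because both $N$ and the factor $G_m$ are $F$-stable subgroups of the product, and $F$ acts coordinate-wise. For normality in $G_m$, given $g\in G_m$ I would lift $g$ to an element $\tilde g = (g_1,\dots,g_{m-1},g)\in N$ using the surjectivity of $N\to G_m$; then for $k=(1,\dots,1,k_m)\in K$ the coordinate-wise computation gives $\tilde g k \tilde g^{-1}=(1,\dots,1,gk_mg^{-1})$, which lies in $G_m$ and (being a conjugate of $k\in N$ by $\tilde g\in N$) also lies in $N$, hence in $K$. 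Thus $gKg^{-1}\subseteq K$, as required.

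Since $G_m$ is an irreducible $F$-group, irreducibility forces $K = 1$ or $K = G_m$. In the case $K=G_m$, the whole factor $G_m$ sits inside $N$, so $N$ is the preimage under the projection $\pi\colon G\to\prod_{i<m}G_i$ of the image $N':=\pi(N)$; hence $N = N' \times G_m$ as subgroups of $G$. The projection $N'$ is an $F$-subgroup of $\prod_{i<m}G_i$ that still surjects onto each factor, so the inductive hypothesis gives a subset $J'\subseteq\{1,\dots,m-1\}$ for which $N'\to\prod_{i\in J'}G_i$ is an isomorphism, and then $J=J'\cup\{m\}$ works for $N$. In the case $K=1$, the projection $N\to N'\subseteq\prod_{i<m}G_i$ is injective and hence an isomorphism onto its image $N'$, which again is an $F$-subgroup surjecting onto each $G_i$ for $i<m$; the inductive hypothesis applied to $N'$ gives a set $J'$ with $N'\to\prod_{i\in J'}G_i$ an isomorphism, and composing yields the desired isomorphism for $N$ with $J=J'$.

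The only non-routine point is the normality step, where one must avoid the trap that $N$ is not assumed normal in $\prod_i G_i$; the surjectivity of $N$ onto the factor being intersected is precisely what rescues the argument, so the reduction in the first paragraph is doing real work. Everything else is bookkeeping around Goursat-style decompositions combined with the $F$-irreducibility of each $G_i$.
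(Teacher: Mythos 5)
Your proof is correct and follows essentially the same route as the paper's: induction on $m$, identification of $N\cap G_m$ (equivalently, the kernel of the projection of $N$ to the first $m-1$ factors) as a normal $F$-subgroup of $G_m$, and the resulting dichotomy $N\cong \pi(N)$ or $N\cong \pi(N)\times G_m$ feeding the inductive step. One caveat: your sentence claiming that after reducing to surjective projections it suffices to show $J=\{1,\dots,m\}$ works (i.e.\ that $N$ is the full product) is false --- the diagonal in $G\times G$ is an $F$-subgroup surjecting onto both factors --- but your actual case analysis never relies on that claim (indeed your $K=1$ case produces $J\neq\{1,\dots,m\}$), so the argument as executed stands.
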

\begin{proof}
We prove this by induction on $m$.  Let $\pi_m$ be the projection map from $\prod_{i=1}^m G_i$ to $G_m$, and $\pi$ the projection map from $N$ to $\prod_{i=1}^{m-1} G_i$.  Since $\pi_m(N)$ is $1$ or $G_m$, and $\pi_m(\ker \pi)$ is a normal $F$-subgroup of
$\pi_m(N)$, we have $\pi_m(\ker \pi)$ is $1$ or $G_m$.  If $\pi_m(\ker \pi)=1$, then since $\ker \pi\cap \ker \pi_m=1$, we have $\ker \pi=1$ and $N$ is isomorphic to $\pi(N)$. 
If $\pi_m(\ker \pi)=G_m$, then $N$ is isomorphic to $\pi(N) \times G_m$.
In either case, we apply the inductive hypothesis to $\pi(N)$ and conclude the lemma.  
\end{proof}

\begin{lemma}\label{L:preSchur}
Let $G_1$ and $G_2$ be irreducible $F$-groups.  Then any homomorphism of $F$-groups $\phi: G_1\ra G_2$ with normal image is either trivial or an isomorphism.
\end{lemma}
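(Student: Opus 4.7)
The plan is to mimic the classical Schur's lemma argument, adapted to the $F$-group setting. The key observation is that for a morphism $\phi : G_1 \to G_2$ of $F$-groups, both $\ker \phi$ and $\operatorname{im} \phi$ are automatically $F$-subgroups (because $\phi$ respects the $F$-action), so irreducibility forces each of them to be one of the two extreme subgroups.

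More concretely, I would proceed as follows. First I would note that $\ker \phi$ is a normal $F$-subgroup of $G_1$: it is closed under the $F$-action because if $x \in \ker \phi$ and $f \in F$ then $\phi(f \cdot x) = f \cdot \phi(x) = f \cdot 1 = 1$, and it is normal in $G_1$ as the kernel of a group homomorphism. Since $G_1$ is an irreducible $F$-group, this forces $\ker \phi = 1$ or $\ker \phi = G_1$. Next I would observe that $\operatorname{im} \phi$ is an $F$-subgroup of $G_2$ (again by $F$-equivariance of $\phi$), and by the hypothesis of the lemma it is also normal in $G_2$. Hence $\operatorname{im} \phi$ is a normal $F$-subgroup of $G_2$, and irreducibility of $G_2$ forces $\operatorname{im} \phi = 1$ or $\operatorname{im} \phi = G_2$.

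To finish, I would simply combine the two dichotomies. If $\phi$ is not trivial, then $\operatorname{im} \phi \neq 1$, so $\operatorname{im} \phi = G_2$, i.e.\ $\phi$ is surjective; and $\ker \phi \neq G_1$, so $\ker \phi = 1$, i.e.\ $\phi$ is injective. Therefore $\phi$ is an isomorphism of $F$-groups.

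There is no real obstacle here; the only point that deserves a line of verification is that $\ker \phi$ and $\operatorname{im} \phi$ are genuinely $F$-stable, which is immediate from the $F$-equivariance of $\phi$. The normality hypothesis on $\operatorname{im} \phi$ is what makes the image a legitimate test subgroup against the irreducibility of $G_2$; without it, one could only conclude that $\operatorname{im} \phi$ is an $F$-subgroup of $G_2$, which is not the same condition.
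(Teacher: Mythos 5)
Your proof is correct and follows essentially the same route as the paper: the kernel is a normal $F$-subgroup of $G_1$ and the image (normal by hypothesis, $F$-stable by equivariance) is a normal $F$-subgroup of $G_2$, so irreducibility forces the dichotomy in each case. The paper's proof is just a one-sentence compression of exactly this argument.
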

\begin{proof}
If it is not trivial, then $\ker(\phi)$ is a normal $F$-subgroup and so
must be trivial, and $\im(\phi)$ is a normal $F$-subgroup and
must be $G_2$, so it is a bijection.
\end{proof}

\begin{lemma}\label{L:subsplit}
Let $G_i$ be irreducible $F$-groups for $i=1,\dots, k$ such that for $i\ne j$, we have that $G_i$ and $G_j$ are not isomorphic as $F$-groups. 
Let $N$ be a normal $F$-subgroup of  $\prod_{i=1}^k G_i^{m_i}$, then $N=\prod_{i=1}^k N_i,$
where $N_i$ is a normal $F$-subgroup of $G_i^{m_i}$.
\end{lemma}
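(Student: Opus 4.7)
Plan. My approach is to apply Lemma~\ref{L:subprod} to the full product $\prod_{(i,j)} G_{ij}$ (where $G_{ij}:=G_i$ for $1 \le i \le k$ and $1 \le j \le m_i$), viewing $N$ as an $F$-subgroup thereof, and then to show that the resulting ``graph'' structure of $N$ decouples along the index $i$.

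First I would verify the hypothesis of Lemma~\ref{L:subprod}: each projection $\pi_{(i,j)}(N) \subseteq G_i$ is an $F$-subgroup (automatic from $F$-equivariance of the projection) and is normal in $G_i$, since conjugation by elements of the single factor $G_{(i,j)}$ preserves $N$ and acts only on the $(i,j)$-coordinate. By irreducibility of $G_i$ each such projection is therefore $1$ or $G_i$, so Lemma~\ref{L:subprod} applies and yields a set $J$ of coordinates such that $\pi_J|_N$ is an $F$-isomorphism from $N$ onto $\prod_{(i,j)\in J} G_{ij}$. Equivalently, $N$ is the graph of an $F$-homomorphism whose components are $\phi_{(i',j')}: \prod_{(i,j)\in J} G_{ij} \to G_{i'}$, one for each $(i',j') \notin J$.

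The crux of the argument, and the main obstacle, is to show that each $\phi_{(i',j')}$ depends only on the ``$i'$-block'' of its input, i.e.\ that the restriction $\psi^{(i,j)} := \phi_{(i',j')}|_{G_{ij}}$ is trivial whenever $i \ne i'$. I would first use normality of $N$ under conjugation by an element $h$ of the output factor $G_{(i',j')}$ itself: since $(i',j') \notin J$, this conjugation preserves the $J$-projection of any $N$-element, so uniqueness from the $F$-isomorphism forces the $(i',j')$-coordinate to also be preserved, placing the image of $\phi_{(i',j')}$ inside $Z(G_{i'})$. When $G_{i'}$ is non-abelian irreducible, $Z(G_{i'})$ is a characteristic, hence $F$-stable, normal subgroup of $G_{i'}$, so by irreducibility plus non-abelianness it is trivial, and $\phi_{(i',j')}$ vanishes identically. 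When $G_{i'}$ is abelian, for each $i \ne i'$ the image of $\psi^{(i,j)}$ is automatically normal in $G_{i'}$, and Lemma~\ref{L:preSchur} forces $\psi^{(i,j)}$ to be trivial or an $F$-isomorphism; the isomorphism case is impossible either by the hypothesis $G_i \not\simeq G_{i'}$ as $F$-groups when $G_i$ is abelian, or by perfectness of $G_i$ (its commutator subgroup would otherwise be a proper nontrivial characteristic $F$-subgroup of the irreducible $F$-group $G_i$) when $G_i$ is non-abelian, since any homomorphism from a perfect group to an abelian one is trivial.

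Once every off-block restriction vanishes, the $(i',j')$-coordinate of any element of $N$ is determined by the $i'$-block of its $J$-projection alone. Hence $N$ splits as a product indexed by $i$: one obtains $N = \prod_i (N \cap G_i^{m_i})$, and setting $N_i := N \cap G_i^{m_i}$ gives a normal $F$-subgroup of $G_i^{m_i}$ as the intersection of two normal $F$-subgroups of the ambient product. The hypothesis that the $G_i$ are pairwise non-isomorphic \emph{as $F$-groups}, rather than merely as abstract groups, is used essentially in the Schur-type step above; the diagonal of $G \times G$ for an irreducible $F$-group $G$ shows that the conclusion fails without it.
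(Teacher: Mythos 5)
Your proof is correct and follows essentially the same route as the paper's: apply Lemma~\ref{L:subprod} to realize $N$ as (the graph of a map on) a sub-product of coordinates, then use a Schur-type argument to kill every cross-block projection, so that $N$ decouples into $\prod_i (N\cap G_i^{m_i})$. The only real divergence is your centre argument for non-abelian targets (conjugating by the output factor $G_{(i',j')}$ to force the off-$J$ coordinates into $Z(G_{i'})$), where the paper uniformly invokes Lemma~\ref{L:preSchur}; your variant neatly supplies the normality of the image that Lemma~\ref{L:preSchur} implicitly requires.
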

\begin{proof}
Since $N$ is a normal $F$-subgroup of $\prod_{i=1}^k G_i^{m_i}$, its projection to each factor $G_i$ is normal $F$-subgroup of $G_i$, hence it's either 1 or $G_i$. By Lemma~\ref{L:subprod}, we can write $N$ abstractly as $\prod_{i=1}^k G_i^{n_i}$ and  define $N_i$ to be the subgroup of $N$ such that it is the image of the factor $G_i^{n_i}$ under the chosen isomorphism between $\prod_{i=1}^k G_i^{n_i}$ and $N$.
From Lemma~\ref{L:preSchur}, we see that for $i\neq j$ the projection $N_i\to G_j^{m_j}$ is trivial, and it follows that $N_i$ is the subgroup of elements of $N$ that are trivial in the projections to $G_j^{m_j}$ for all $j\ne i$.  Finally, if $n\in N_i$, then we can see that any $\prod_{i=1}^k G_i^{m_i}$ conjugate of $n$ is trivial in the projections to $G_j^{m_j}$ for all $j\ne i$ and in is $N$.  Hence $N_i$ is a normal $F$-subgroup of $G_i^{m_i}$.
%Since $N$ is a normal $F$-subgroup of $\prod_{i=1}^k G_i^{m_i}$, its projection to each factor $G_i$ is normal $F$-subgroup of $G_i$, 
%hence it's either 1 or $G_i$. By Lemma~\ref{L:subprod}, we can write $N$ abstractly as $\prod_{i=1}^k G_i^{n_i}$ and $N_i=G_i^{n_i}$. 
%Then by Lemma~\ref{L:preSchur}, any $F$-automorphism of $N$ maps $N_i$ to itself, so $N_i$ maps to itself under conjugation by $
%\prod_{i=1}^k G_i^{m_i}$ and hence is normal in $\prod_{i=1}^k G_i^{m_i}$. Also from Lemma~\ref{L:preSchur}, we see that for $i\neq j$ 
%the projection $N_i\to G_j^{m_j}$ is trivial and it follows that $N_i$ is an $F$-subgroup of $G_i^{m_i}$.
\end{proof}

The followings are two corollaries of Lemma~\ref{L:subsplit}.
\begin{corollary}\label{C:subwhole}
Let $G_i$ be irreducible $F$-groups for $i=1,\dots, k$ such that for $i\ne j$, we have that $G_i$ and $G_j$ are not isomorphic as $F$-groups. 
Let $N$ be a normal $F$-subgroup of  $\prod_{i=1}^k G_i^{m_i}$.  Then $N=\prod_{i=1}^k G_i^{m_i}$ if and only if 
$\pi_i(N)=G_i^{m_i}$ for each projection $\pi_i : N \ra G_i^{m_i}$.
\end{corollary}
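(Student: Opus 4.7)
The plan is to deduce this directly from Lemma~\ref{L:subsplit}, which already does essentially all the heavy lifting. The forward direction is trivial: if $N=\prod_{i=1}^k G_i^{m_i}$, then obviously $\pi_i(N)=G_i^{m_i}$ for every $i$. So the content is in the reverse direction.

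For the reverse direction, I would start by applying Lemma~\ref{L:subsplit} to write $N=\prod_{i=1}^k N_i$, where each $N_i$ is a normal $F$-subgroup of $G_i^{m_i}$, viewed as sitting inside $\prod_j G_j^{m_j}$ via the inclusion that is trivial on all other factors. The key observation is then that under the projection $\pi_i:\prod_j G_j^{m_j}\to G_i^{m_i}$, the factor $N_j$ with $j\neq i$ maps to the identity (by construction of the decomposition in the proof of Lemma~\ref{L:subsplit}), while $N_i$ maps isomorphically onto itself. Hence $\pi_i(N)=N_i$.

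Combining this identification with the hypothesis $\pi_i(N)=G_i^{m_i}$, I conclude $N_i=G_i^{m_i}$ for every $i$, and therefore $N=\prod_{i=1}^k G_i^{m_i}$, as desired. No step here is really an obstacle; the only subtlety is to make sure that the decomposition from Lemma~\ref{L:subsplit} is interpreted so that each $N_i$ does lie in the $i$-th factor (and not in some diagonally embedded copy), which is exactly what the proof of that lemma provides.
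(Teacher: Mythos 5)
Your proof is correct and matches the paper's intent exactly: the paper states this as an immediate corollary of Lemma~\ref{L:subsplit} without further argument, and your derivation (identifying $\pi_i(N)$ with the factor $N_i$ from that lemma's internal decomposition) is precisely the intended one. Your remark about needing $N_i$ to sit literally inside the $i$-th factor, which the proof of Lemma~\ref{L:subsplit} guarantees by defining $N_i$ as the elements of $N$ trivial in all other projections, is the right point to check.
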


\begin{corollary}\label{C:probsplit}
Let $G_i$ be finite irreducible $F$-groups for $i=1,\dots, k$ such that for $i\ne j$, we have that $G_i$ and $G_j$ are not isomorphic as $F$-groups, and let $m_i$ be non-negative integers.
   Let $R=\prod_{i=1}^k G_i^{m_i}$.
Then
$$
\Prob( [x_1,\dots, x_{n+u} ]_F=R)=\prod_{i=1}^m \Prob( [y_{i,1},\dots, y_{i,n+u} ]_F=G_i^{m_i}),
$$ 
where the $x_k$ are independent, uniform random elements of $R$, and the $y_{i,k}$
are independent, uniform random elements of $G_i^{m_i}.$
\end{corollary}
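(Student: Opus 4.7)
The plan is to use Corollary~\ref{C:subwhole} to rewrite the event $\{[x_1,\dots,x_{n+u}]_F = R\}$ as a conjunction of $k$ events, each depending on only a single factor $G_i^{m_i}$, and then to invoke independence of coordinates under the product (uniform) measure on $R=\prod_{i=1}^k G_i^{m_i}$.

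First, I would decompose coordinates along the projections $\pi_i: R \twoheadrightarrow G_i^{m_i}$, writing $x_k = (x_{1,k},\dots,x_{k,k})$ with $x_{i,k} := \pi_i(x_k)$. Since the uniform measure on a finite direct product is the product of the uniform measures on the factors, the family $\{x_{i,k}\}_{i,k}$ is jointly independent, and for each fixed $i$ the variables $x_{i,1},\dots,x_{i,n+u}$ are independent uniform elements of $G_i^{m_i}$. This identifies the distribution of $(x_{i,\cdot})$ with the distribution of $(y_{i,\cdot})$ appearing on the right-hand side of the claim.

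Next, set $N := [x_1,\dots,x_{n+u}]_F$. Because $\pi_i$ is a surjective $F$-equivariant homomorphism, the image of a closed normal $F$-subgroup is again a closed normal $F$-subgroup, and $\pi_i$ carries the normal $F$-subgroup generated by a set to the normal $F$-subgroup generated by its image; hence
\[
\pi_i(N) \;=\; [x_{i,1},\dots,x_{i,n+u}]_F \quad\text{in } G_i^{m_i}.
\]
By Corollary~\ref{C:subwhole}, the event $\{N = R\}$ coincides with the intersection over $i$ of the events $\{\pi_i(N) = G_i^{m_i}\}$, i.e., $\{[x_{i,1},\dots,x_{i,n+u}]_F = G_i^{m_i}\}$.

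Finally, since the $x_{i,k}$ for different values of $i$ are independent, these $k$ events are mutually independent, so their joint probability factors as the product of the individual probabilities, yielding the displayed formula. There is no real obstacle here: the corollary is essentially a bookkeeping consequence of Lemma~\ref{L:subsplit} (through Corollary~\ref{C:subwhole}) together with independence of coordinates, and the only point deserving a line of justification is the compatibility of $\pi_i$ with the operation $[\,\cdot\,]_F$ of taking the closed normal $F$-subgroup generated by a tuple.
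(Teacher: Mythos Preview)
Your argument is correct and matches the paper's intended approach: the corollary is stated there as an immediate consequence of Lemma~\ref{L:subsplit} (via Corollary~\ref{C:subwhole}), and you have spelled out exactly that derivation, together with the independence of coordinates under the uniform measure on the product. The one point you flag---that the surjective $F$-equivariant projection $\pi_i$ carries $[x_1,\dots,x_{n+u}]_F$ to $[\pi_i(x_1),\dots,\pi_i(x_{n+u})]_F$---is indeed the only step requiring a word of justification, and it holds for the reason you give. (A minor notational remark: in your decomposition $x_k=(x_{1,k},\dots,x_{k,k})$ the symbol $k$ is doing double duty as both the number of factors and the running index of the random elements; it would be cleaner to write $x_j=(x_{1,j},\dots,x_{k,j})$ for $j=1,\dots,n+u$.)
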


The next lemma will help us determine when $ [y_{i,1},\dots, y_{i,n+u} ]_F=G_i^{m_i}$.
\begin{lemma}\label{L:critgen}
Let $G$ be an irreducible $F$-group.  If $G$ is non-abelian, then a normal $F$-subgroup $N$ of $G^m$
is all of $G^m$ if and only if it is non-trivial in each of the $m$ projections to $G$.
If $G$ is abelian, then a normal $F$-subgroup $N$ of $G^m$
is all of $G^m$ if and only if the projection onto the product of the first $m-1$ factors is surjective and the projection of $N$
onto the $m$th factor does not factor through the projection onto the product of the first $m-1$ factors.
\end{lemma}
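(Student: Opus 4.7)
The plan is to treat each direction of each case separately. Both ``only if'' directions are essentially immediate, provided $G$ is nontrivial: if $N=G^m$ then each coordinate projection is surjective, handling the non-abelian side; and in the abelian case, if $p_m$ factored through the projection onto the first $m-1$ factors on all of $G^m$, then fixing the first $m-1$ coordinates and varying the last would force $G$ to be trivial. I will therefore focus on the ``if'' directions, which invoke irreducibility of $G$ in rather different ways in the two cases.

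For the non-abelian ``if'' direction, first observe that each projection $\pi_i(N)$ is a normal $F$-subgroup of $G$ and hence, by irreducibility, is either $1$ or $G$; the hypothesis rules out the former, so $\pi_i(N)=G$ for all $i$. Lemma~\ref{L:subprod} then produces a subset $J\subseteq\{1,\dots,m\}$ such that the projection $\pi_J\colon N\to G^{|J|}$ is an isomorphism, and it remains to prove $J=\{1,\dots,m\}$. If some $j\notin J$, then conjugating any $n=(g_1,\dots,g_m)\in N$ by the element of $G^m$ equal to $h$ in coordinate $j$ and $1$ elsewhere changes only the $j$th coordinate (to $hg_jh^{-1}$) while fixing $\pi_J(n)$; injectivity of $\pi_J|_N$ forces this conjugate to equal $n$, so $g_j$ commutes with every $h\in G$, i.e.\ $g_j\in Z(G)$. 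Since $F$ acts by automorphisms, $Z(G)$ is an $F$-invariant normal subgroup of $G$, and irreducibility together with nonabelianness pins $Z(G)=1$, contradicting $\pi_j(N)=G$.

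For the abelian ``if'' direction, let $\pi'\colon N\to G^{m-1}$ denote the projection onto the first $m-1$ factors and set $K=\ker\pi'$, which sits naturally inside the $m$th factor $G$. Then $K$ is $F$-invariant because $\pi'$ is $F$-equivariant, and $K$ is normal in $G$ because $G$ is abelian, so by irreducibility $K$ is either $1$ or $G$. If $K=1$ then $\pi'$ is injective and, combined with the surjectivity hypothesis, an isomorphism $N\simeq G^{m-1}$; but then $p_m|_N=p_m\circ(\pi')^{-1}\circ\pi'$ factors through $\pi'$, contradicting the hypothesis. Hence $K=G$, meaning $N$ contains $\{1\}^{m-1}\times G$; combined with surjectivity of $\pi'$, this yields $N=G^m$.

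The principal subtlety is the centrality step in the non-abelian case: one must recognize $Z(G)$ as an $F$-invariant normal subgroup, so that irreducibility plus nonabelianness forces $Z(G)=1$, which is what ultimately rules out $J\subsetneq\{1,\dots,m\}$. Everything else is a clean combination of Lemma~\ref{L:subprod} with the defining property of irreducibility.
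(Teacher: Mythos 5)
Your proof is correct. The abelian case is essentially identical to the paper's: both consider the kernel of the projection onto the first $m-1$ factors, identify it with a normal $F$-subgroup of the last copy of $G$ (normality being automatic since $G$ is abelian), and split into the two cases $1$ or $G$, with the case $1$ contradicting the factorization hypothesis. In the non-abelian case you take a genuinely different, though closely related, route: the paper inducts on $m$ to get surjectivity onto the first $m-1$ factors and then analyzes $\pi_m(\ker\pi)$, deriving in the bad case that conjugation fixes every element of $G$; you instead invoke Lemma~\ref{L:subprod} to get a subset $J$ with $\pi_J|_N$ an isomorphism and show directly that any coordinate $j\notin J$ would land in $Z(G)$, which is a characteristic (hence $F$-invariant) normal subgroup and so trivial for non-abelian irreducible $G$. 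The decisive mechanism — normality of $N$ in $G^m$ plus injectivity of a partial projection forces centrality of the remaining coordinates — is the same in both proofs; your version outsources the induction to Lemma~\ref{L:subprod} and makes the role of $Z(G)$ explicit, which is arguably cleaner, at the cost of invoking a lemma the paper's own proof does not need here.
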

\begin{proof}
The only if direction is clear.
  We let $\pi$ be the projection of $N$ onto the first $m-1$ factors of $G^m$ and  $\pi_m$ the projection onto the last factor.
For the other direction, for non-abelian $G$ we induct and so we have by the inductive hypothesis $\pi(N)=G^{m-1}$.  
For $G$ abelian we have $\pi(N)=G^{m-1}$ as a hypothesis.  
We consider $\pi_m (\ker \pi)$, which must be $1$ or $G$.  If $\pi_m (\ker \pi)$ is $G$, then we see $N=G^m$, as it includes element with every possible first $m-1$ coordinates, and then an element with trivial first $m-1$ coordinates and every possible $m$th coordinate.  
Now we show that we cannot have   $\pi_m (\ker \pi)=1.$
  Suppose for the sake of contradiction that $\pi_m (\ker \pi)=1.$  Then
     since $\ker \pi_m\cap \ker \pi =1$, we have $\ker \pi=1,$ and $\pi$ is an isomorphism on $N$, and in particular $\pi_m$ factors through $\pi$.  So given our hypotheses, this can only happen when $G$ is non-abelian. 
      We write elements $(a,b) \in G^{m-1} \times G$.  Since $\pi_m(N)$ is non-trivial, it must be $G$ by the irreducibility of $G$. For every $b\in G$,  we have some $a\in G^{m-1}$ such that $(a,b)\in N$.
     However, since $N$ is normal, that means $(a,gbg^{-1})\in N$ for every $g\in G$.  Since $\pi_m$ factors through $\pi$, we have that $b=gbg^{-1}$ for every $b,g\in G$, which is a contradiction, since above we saw we can only be in this case if $G$ is non-abelian. 
\end{proof}

Lemma~\ref{L:critgen} lets us compute the probabilities appearing in the right-hand side of Corollary~\ref{C:probsplit} in the following two corollaries.
\begin{corollary}\label{C:nonabprob}
If $G$ is a finite non-abelian irreducible $F$-group, and $y_{k}$ for $k=1,\dots,n+u$
are independent, uniform random elements of $G^{m}$, then
$$
\Prob( [y_{1},\dots, y_{n+u} ]_F=G^{m})=(1-|G|^{-n-u})^m.
$$
\end{corollary}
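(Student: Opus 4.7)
The plan is to combine Lemma~\ref{L:critgen} with the independence of coordinates of uniform random elements of $G^m$. By Lemma~\ref{L:critgen}, since $G$ is non-abelian, the normal $F$-subgroup $[y_1,\dots,y_{n+u}]_F$ equals $G^m$ if and only if, for each $i\in\{1,\dots,m\}$, the image of $[y_1,\dots,y_{n+u}]_F$ under the $i$th projection $\pi_i: G^m\to G$ is nontrivial. The image $\pi_i([y_1,\dots,y_{n+u}]_F)$ is the closed normal $F$-subgroup of $G$ generated by $\pi_i(y_1),\dots,\pi_i(y_{n+u})$.

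Next I would use the irreducibility of $G$ as an $F$-group: any nontrivial element of $G$ generates, as a normal $F$-subgroup, a nontrivial normal $F$-subgroup, which must be all of $G$. Consequently $\pi_i([y_1,\dots,y_{n+u}]_F)$ is trivial if and only if $\pi_i(y_k)=1$ for every $k=1,\dots,n+u$. Since $y_k$ is uniform on $G^m$, the element $\pi_i(y_k)$ is uniform on $G$ and independent of $\pi_i(y_{k'})$ for $k'\neq k$, so
$$\Prob\bigl(\pi_i([y_1,\dots,y_{n+u}]_F)=1\bigr)=|G|^{-(n+u)}.$$

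Finally I would invoke independence across the $m$ coordinate projections: for a uniform element of $G^m$, the $m$ coordinates are independent and uniform in $G$, so the events $\{\pi_i([y_1,\dots,y_{n+u}]_F)\neq 1\}$ for $i=1,\dots,m$ are mutually independent. Multiplying yields
$$\Prob([y_1,\dots,y_{n+u}]_F=G^m)=\prod_{i=1}^m \bigl(1-|G|^{-(n+u)}\bigr)=(1-|G|^{-n-u})^m.$$
There is no real obstacle here; the only subtlety is the use of irreducibility to reduce the triviality-of-normal-closure condition to the coordinate-wise triviality of the generators, after which the computation is immediate from independence.
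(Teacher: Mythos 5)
Your proof is correct and follows exactly the route the paper intends: the corollary is stated as an immediate consequence of Lemma~\ref{L:critgen}, and your argument — reducing surjectivity to nontriviality of each of the $m$ coordinate projections, noting that the projected normal closure is trivial iff all the projected generators are trivial, and then using independence of the coordinates of uniform elements of $G^m$ — is precisely the omitted verification. The only minor remark is that irreducibility is not actually needed for the triviality step (the normal closure of a set is trivial iff every element of the set is trivial), though invoking it does no harm.
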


\begin{corollary}\label{C:abprob}
If $G$ is a finite abelian irreducible $F$-group, and $y_{k}$ for $k=1,\dots,n+u$
are independent, uniform random elements of $G^{m}$, then
$$
\Prob( [y_{1},\dots, y_{n+u} ]_F=G^{m})
=\prod_{k=0}^{m-1} (1-h_F(G)^k| G|^{-n-u}).
$$
\end{corollary}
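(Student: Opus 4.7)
My plan is induction on $m$, with the abelian clause of Lemma~\ref{L:critgen} as the engine. The base case $m=1$ is immediate: by irreducibility of $G$, the event $[y_1,\ldots,y_{n+u}]_F = G$ fails exactly when every $y_k$ is trivial, giving probability $1 - |G|^{-n-u}$, which matches the $k=0$ factor of the product.

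For the inductive step, I would write $y_k = (y_k', z_k) \in G^{m-1} \times G$, set $N = [y_1,\ldots,y_{n+u}]_F$, and let $\pi \colon G^m \to G^{m-1}$ and $\pi_m \colon G^m \to G$ be the two projections. Lemma~\ref{L:critgen} says $N = G^m$ iff (i) $\pi(N) = G^{m-1}$, equivalently $[y_1',\ldots,y_{n+u}']_F = G^{m-1}$, and (ii) $\pi_m|_N$ does not factor through $\pi|_N$. Event (i) depends only on the $y_k'$'s, which are themselves independent and uniform in $G^{m-1}$, so by the inductive hypothesis $\Prob(\mathrm{(i)}) = \prod_{k=0}^{m-2}(1 - h_F(G)^k|G|^{-n-u})$.

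The crux is computing the conditional probability of (ii) given (i). Since $\pi|_N$ is a surjective morphism of $F$-groups, any factorization $\pi_m|_N = \phi\circ\pi|_N$ is by a well-defined $\phi \in \Hom_F(G^{m-1}, G)$. Because $G^m$ is abelian, $N$ coincides with the $\Z[F]$-submodule generated by the $y_k$, and so the existence of such a $\phi$ is equivalent to the system $z_k = \phi(y_k')$ holding for every $k$. Given (i), the $y_k'$'s generate $G^{m-1}$ as a $\Z[F]$-module, which both forces that distinct $\phi$'s yield disjoint events (each $\phi$ is determined by its values on the $y_k'$'s) and shows that $\Hom_F(G^{m-1}, G) \cong \End_F(G)^{m-1}$ has size $h_F(G)^{m-1}$. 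Each individual event $\{z_k = \phi(y_k')\text{ for all }k\}$ has probability $|G|^{-n-u}$ by the independence and uniformity of the $z_k$'s, so summing over disjoint $\phi$'s gives $\Prob(\neg\mathrm{(ii)}\mid\mathrm{(i)}) = h_F(G)^{m-1}|G|^{-n-u}$, and therefore $\Prob(\mathrm{(ii)}\mid\mathrm{(i)}) = 1 - h_F(G)^{m-1}|G|^{-n-u}$.

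Multiplying the inductive hypothesis by this conditional factor yields the desired $\prod_{k=0}^{m-1}(1 - h_F(G)^k|G|^{-n-u})$. I expect the only delicate point to be verifying the disjointness of the events indexed by $\phi$, since this rests entirely on the $\Z[F]$-generation of $G^{m-1}$ by the $y_k'$ under (i); once this is secured, the rest is direct from Lemma~\ref{L:critgen} and the abelian simplification that the normal $F$-subgroup generated equals the $\Z[F]$-submodule generated. As an automatic sanity check, the inequality $\sum_\phi \Prob(A_\phi \cap \mathrm{(i)}) \leq \Prob(\mathrm{(i)})$ forces $1 - h_F(G)^{m-1}|G|^{-n-u} \geq 0$ whenever $\Prob(\mathrm{(i)})>0$, so no non-probabilities appear in the inductive formula.
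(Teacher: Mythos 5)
Your proof is correct and is essentially the paper's argument in inductive rather than telescoping form: the paper writes the probability as $\prod_{k}\Prob(\Pi_{k+1}(N)=G^{k+1}\mid \Pi_k(N)=G^k)$ and, conditioning on the first $k$ coordinates, counts the $h_F(G)^k$ ``bad'' tuples arising from maps in $\Hom_F(G^k,G)$, using exactly your observation that distinct maps give distinct (hence disjoint) bad events because the images generate $G^k$. The ingredients (Lemma~\ref{L:critgen} and the identification $|\Hom_F(G^{k},G)|=h_F(G)^{k}$ from Lemma~\ref{L:morto1}) and the delicate disjointness point are the same in both.
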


\begin{proof}
Let $\pi_k$ be the projection of $G^m$ onto the $k$th factor, and $\Pi_k$ the projection of $G^m$ to the first $k$ factors.
We have
\begin{align*}
&\Prob( [y_{1},\dots, y_{n+u} ]_F=G^{m})\\
=&\prod_{k=0}^{m-1} \Prob ( \Pi_{k+1}([y_{1},\dots, y_{n+u} ]_F)=G^{k+1} \,|\, \Pi_{k}([y_{1},\dots, y_{n+u} ]_F)=G^{k} ).
\end{align*}
We condition on the values of $\Pi_{k}(y_i)$, and we still have, with this conditioning, that the $\pi_{k+1}(y_i)$ are uniform, independent random in $G$.  By Lemma~\ref{L:critgen}, given  $\Pi_{k}([y_{1},\dots, y_{n+u} ]_F)=G^{k}$, we will have
$\Pi_{k+1}([y_{1},\dots, y_{n+u} ]_F)=G^{k+1},$ exactly if the map
$\pi_{k+1}|_{[y_{1},\dots, y_{n+u} ]_F}$ does not factor through 
$\Pi_{k}|_{[y_{1},\dots, y_{n+u} ]_F}$.
We have a total of $|G|^{n+u}$ choices for the $(n+u)$-tuple $(\pi_{k+1}(y_1), \cdots, \pi_{k+1}(y_{n+u}))$.  Call 
choice for $(\pi_{k+1}(y_1),\dots, \pi_{k+1}(y_{n+u}))$ \emph{bad}
if $\pi_{k+1}|_{[y_{1},\dots, y_{n+u} ]_F}$ factors through $\Pi_{k}|_{[y_{1},\dots, y_{n+u} ]_F}$.
Since $\Pi_{k}([y_{1},\dots, y_{n+u} ]_F)=G^{k}$, there are $|\Hom_F(G^k,G)|$ choices for
maps from $G^k$ to $G$, each of which gives a bad choice for  $(\pi_{k+1}(y_1),\dots, \pi_{k+1}(y_{n+u}))$ (and all bad choices arise this way). 
For two maps in $\Hom_F(G^k,G) $ to give the same bad choice, they would have to agree on $\Pi_k(y_i)$ for all $i$, and since 
$\Pi_{k}([y_{1},\dots, y_{n+u} ]_F)=G^{k}$, this would imply the two maps in $\Hom_F(G^k,G) $ would be the same.
Thus there are $|\Hom_F(G^k,G)|$ bad choices in $|G|^{n+u}$ for the   $\pi_{k+1}(y_i)$, and as
$|\Hom_F(G^k,G)|=h_F(G)^k$, the corollary follows.
\end{proof}

Theorem~\ref{T:probfrommult} now
 follows from Corollaries~\ref{C:probsplit}, \ref{C:nonabprob}, and \ref{C:abprob}. Also, we can now prove the following lemma which is key for our general approach in Section \ref{S:setup}.
 
\begin{lemma}\label{L:irred}
Let $G$ be a finite group, and  let $N$ be a normal subgroup of $G$.
Let $M$ be the intersection of all maximal proper, $G$-normal subgroups of $N$.
Then $N/M$ is a $G/M$-group under the action of conjugation.
We have that $N/M$ is isomorphic, as an $G/M$-group, to a direct product of irreducible $G/M$-groups. Moreover, among these irreducible $G/M$-groups, the abelian ones all have the action of $G/M$ factor through $G/N$, so are also irreducible $G/N$-groups.
\end{lemma}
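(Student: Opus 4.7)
The plan is to first check that $N/M$ genuinely carries a $G/M$-action, then decompose it using Lemma~\ref{L:subprod} applied to an appropriate diagonal embedding, and finally verify the abelian-factor claim by a short commutator computation.

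First I would unpack the definition: each maximal proper $G$-normal subgroup $M_i$ of $N$ is normal in $G$ (by hypothesis), so $M = \bigcap_i M_i$ is normal in $G$, and conjugation gives $N/M$ a $G$-action. To check this descends to a $G/M$-action it suffices to show $[M, N] \subseteq M$, and this follows directly from $M$ being $G$-normal: for $m \in M$ and $n \in N$, the element $n m^{-1} n^{-1}$ lies in $M$, hence so does $[m,n] = m \cdot (n m^{-1} n^{-1})$. I would then observe that each $N/M_i$ is irreducible as a $G/M$-group, since any normal $G/M$-subgroup of $N/M_i$ pulls back to a $G$-normal subgroup of $N$ sitting between $M_i$ and $N$, and by maximality of $M_i$ the only options are $M_i$ and $N$.

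The main step is to consider the diagonal $G/M$-equivariant map
\[
N/M \hookrightarrow \prod_i N/M_i,
\]
which is injective precisely by the definition of $M$ as the intersection. The projection to each factor $N/M_i$ is surjective, so the hypothesis of Lemma~\ref{L:subprod} is satisfied (each projection is all of the corresponding irreducible factor, not the trivial subgroup). Applying that lemma with $F = G/M$ gives a subset $J$ of the index set such that $N/M$ projects isomorphically onto $\prod_{j \in J} N/M_j$, exhibiting $N/M$ as a direct product of irreducible $G/M$-groups. I expect this invocation of Lemma~\ref{L:subprod} to be the main (and really only) substantive content of the argument; the preceding setup just supplies its hypotheses.

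For the final assertion, suppose $N/M_j$ is abelian. Then for any $n, n' \in N$, the image $\bar n$ of $n$ in $N/M_j$ commutes with $\bar{n'}$, so $n' n {n'}^{-1} n^{-1} \in M_j$, showing $N$ acts trivially on $N/M_j$ by conjugation. Hence the $G/M$-action on $N/M_j$ factors through $G/N$. Irreducibility as a $G/N$-group is then automatic, because a subgroup of $N/M_j$ is $G/N$-stable iff it is $G$-stable iff it is $G/M$-stable, and we already know $N/M_j$ is $G/M$-irreducible. This completes the outline.
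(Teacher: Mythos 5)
Your proposal is correct and follows essentially the same route as the paper: embed $N/M$ into the product of its irreducible quotients by the maximal proper $G$-normal subgroups of $N$ (injective by the definition of $M$), apply Lemma~\ref{L:subprod} to split it as a direct product, and note that $N$ acts trivially on any abelian factor so the action descends to $G/N$. The extra verifications you include (that the action descends to $G/M$ and that each $N/M_i$ is irreducible) are correct and are left implicit in the paper.
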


\begin{proof}
We consider $N$ as a $G$-group under conjugation.
A subgroup of $N$ is a normal subgroup of $G$ if and only if it is a $G$-subgroup of $N$.  
Taking the quotient modulo $M$ gives us a containment respecting bijection between the $G$-subgroups of $N$ containing $M$ and the $G/M$-subgroups of $N/M$.  Since all maximal proper $G$-subgroups of $N$ contain $M$, the quotient map gives us a bijection between 
the maximal proper $G$-subgroups of $N$ and the maximal proper $G/M$-subgroups of $N/M$, and in particular the quotient $M/M=1$ is the quotient of all the maximal proper $G/M$-subgroups of $N/M$.
Let $M_i$ be the maximal proper $G/M$-subgroups of $N/M$.
 Each $(N/M)/M_i$ is an irreducible $G/M$-group.  We have that $N/M$ is a subgroup of $\prod_i (N/M)/M_i$ that surjects into each factor, $N/M$ is isomorphic to a direct product of irreducible $G/M$-groups by Lemma~\ref{L:subprod}.
On an abelian irreducible $G/M$-group factor, conjugation by any element in $N/M$ gives the trivial group action, so we have the last statement of the lemma.
\end{proof}

%\melanie{Need to say the following somewhere?: A simpler lemma we are using above is if $B$ is a simple $F$-group, and $A$ is an $F$-
%group, and $\Gamma\sub A\times B$ is an $F$-subgroup surjecting onto $A$, then if $\pi_2: \Gamma \ra B$ does not factor through $\pi_1: 
%\Gamma \ra A$ then $\Gamma=A\times B$.  (Proof:$\pi_2(\ker \pi_1)$ is $1$ or $B$.  In the first case, we have factoring through.  In the 
%second case, we have the desired conclusion.)}

\section{Determining factors appearing in $R$}\label{S:factors}

Throughout this section, we assume $S$ is a set of finite groups, $n$ is a positive integer, and $H$ is a finite level $S$ group. If $S$ is finite, then we let
$$1\ra R \ra F\ra H \ra 1$$
be the fundamental short exact sequence associated to $S$, $n$ and $H$ (see Section~\ref{S:setup}). In this section, we will bound   which irreducible $F$-groups are possible factors in $R$. 
A finite irreducible $F$-group is characteristically simple (that is, it contains no proper nontrivial characteristic subgroups) and thus, as a group, a direct product $\Gamma^m$ of isomorphic simple groups.
First, when the group $H$ is fixed, Lemma~\ref{L:HfactR} will bound the possible power $m$ for factors in $R$.    For fixed $S$, Corollary~\ref{C:nonewsimple} will then bound the possible simple group $\Gamma$ for factors in $R$.  We take a slightly longer than necessary  route to Corollary~\ref{C:nonewsimple} because along the way we will develop the technology to prove Corollary~\ref{C:CfpFin}, 
which will later be critical in Section~\ref{S:cntadd} for our proof of countable additivity of $\mu_u$.

\begin{lemma} \label{L:HfactR}
Let $(E, \pi)$ be an $H$-extension such that $G=\ker \pi$ is a finite irreducible $E$-group. Then $G$ is isomorphic to $\Gamma^m$ for some finite simple group $\Gamma$ and $m\leq |H|$.
\end{lemma}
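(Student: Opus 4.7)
My plan is first to identify $G$ up to isomorphism using characteristic simplicity, and then to bound the exponent $m$ by studying the $E$-action on $G$ in two separate cases.

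\emph{Step 1: $G$ is characteristically simple.} Since $G = \ker\pi$ is normal in $E$, conjugation by $E$ induces automorphisms of $G$. Thus any characteristic subgroup $K$ of $G$ is stable under conjugation by $E$, and being a subgroup of $G$ it is also normal in $G$ (indeed, $G$-conjugation is a special case of $E$-conjugation). So $K$ is a normal $E$-subgroup of $G$, and irreducibility of $G$ forces $K = 1$ or $K = G$. A finite characteristically simple group is a direct power of a finite simple group, so $G \isom \Gamma^m$ for some finite simple $\Gamma$.

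\emph{Step 2: The non-abelian case.} Suppose $\Gamma$ is non-abelian, and write $G = \Gamma_1 \times \cdots \times \Gamma_m$ with each $\Gamma_i \isom \Gamma$. The $\Gamma_i$ are exactly the minimal normal subgroups of $G$, hence are permuted as a set by every automorphism of $G$; in particular $E$ permutes them by conjugation. Since $\Gamma_i$ and $\Gamma_j$ commute for $i \neq j$, inner automorphisms of $G$ fix each $\Gamma_i$ setwise, so the permutation action factors through $E/G \isom H$. The union of any $E$-orbit of factors generates an $E$-invariant normal subgroup of $G$, which by irreducibility must be all of $G$, so $H$ acts transitively on the $m$ factors. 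By orbit-stabilizer this gives $m \leq |H|$.

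\emph{Step 3: The abelian case.} If $\Gamma$ is abelian then $\Gamma \isom \Z/p\Z$ for a prime $p$, so $G \isom (\Z/p\Z)^m$. Because $G$ is abelian, conjugation by $G$ on itself is trivial, and the $E$-action factors through $H$, making $G$ an $\F_p[H]$-module. Irreducibility as an $E$-group translates directly into irreducibility as an $\F_p[H]$-module. In particular any nonzero vector generates, so $G$ is a cyclic $\F_p[H]$-module and is thus a quotient of the regular module $\F_p[H]$. Hence $m = \dim_{\F_p} G \leq \dim_{\F_p} \F_p[H] = |H|$.

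Combining the two cases gives $m \leq |H|$, completing the proof. The argument is quite routine; the only mild subtlety is the bifurcation into the abelian and non-abelian cases, in both of which one uses the same underlying principle that $G$ acts trivially on the combinatorial data being permuted (the set of simple factors, resp.\ $G$ itself), so that the $E$-action factors through $H$.
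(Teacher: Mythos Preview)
Your proof is correct and follows essentially the same approach as the paper: both split into the abelian and non-abelian cases, in the abelian case use that $G$ is an irreducible $\F_p[H]$-module generated by a single orbit, and in the non-abelian case use that $E$ permutes the $m$ simple factors with the action factoring through $H$ and irreducibility forcing transitivity. The only difference is that where you invoke ``the $\Gamma_i$ are exactly the minimal normal subgroups of $G$'' as a known fact, the paper spells this out via a short commutator argument showing that any normal copy of $\Gamma$ in $\Gamma^m$ must coincide with one of the factors.
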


\begin{proof}
Let $G\isom \Gamma^m$, where $\Gamma$ is a finite simple group. If $\Gamma=\Z/p\Z$, then $G \subset \Cen_E(G)$ and  the map $H\ra \Aut(G)=\GL_m(\Z/p\Z)$ defined by conjugation action is an irreducible representation of $H$. Since for any non-zero vector $v \in \F_p^m$, the vectors $hv$, for $h\in H$, span a subrepresentation of $H$, we have the dimension $m$ is at most $|H|$.
 
 If $\Gamma$ is non-abelian, then consider an embedding $ \iota: \Gamma \hookrightarrow \Gamma^m$ such that the image is a normal subgroup. There is an element $a=(a_1, \cdots, a_m) \in \iota(\Gamma)$ such that $a_i$ is not the identity element for some $i$.   
 Let $b\in \Gamma^m$ have $j$th coordinate $1$ for $j\neq i$ and $i$th coordinate $\gamma\in \Gamma$.  Then since $\iota(\Gamma)$ is normal, we have that the commutator $[a,b]\in \iota(\Gamma)$.  The element $[a,b]$ is trivial in all but the $i$th coordinate, where it is $[a_i,\gamma]$.  So the intersection of $\iota(\Gamma)$ and the $i$th factor (which is a normal subgroup of $\Gamma$) contains 
 $[a_i,\gamma]$ for some non-trivial $a_i\in \Gamma$ and all $\gamma\in \Gamma$.  Since $\Gamma$ is a non-abelian simple group, this means 
 the intersection of $\iota(\Gamma)$ and the $i$th factor is non-trivial, and hence all of the $i$th factor.  So   $\iota(\Gamma)$ is exactly the $i$th factor of $\Gamma^m$.  We have thus showed that a normal subgroup of $\Gamma^m$ that is isomorphic to $\Gamma$ must be one of the $m$ factors. So we have a well-defined map $\Aut(\Gamma^m) \to S_m$ (the symmetric group on $m$ elements), and note that $\Inn(\Gamma^m)$ is in the kernel of this map. If $\Gamma^m$ is an irreducible $E$-group, the action of $H$ on the factors must be transitive, which proves $m\leq |H|$.
\end{proof}

\begin{comment}
Since $R$ is a direct product of irreducible $F$-groups, these irreducible $F$-groups must be in the form stated in Lemma~\ref{L:HfactR}.  Next, we want to discuss how the set $S$ decides which factors appear in $R$. We will relate factors in $R$ to the chief factor pairs achieved from $S$ (see Lemma~\ref{L:R-cfp}), and then show in Corollary~\ref{C:new} that the number of factors that can appear in $R$ when $S$ is finite has a bound that is independent of $n$.
%\melanie{need to say independent of $n$ here, and make a more precise statement}
 Counting chief factor pairs will also play an important role in the proof of the countably additivity of $\mu_u$ in Section~\ref{S:cntadd}.
 \end{comment}

Recall that a chief series of a finite group $G$ is a chain of normal subgroups
\begin{equation} \label{eq:chs}
  1=G_0 \lhd G_1 \lhd \cdots \lhd G_r =G
\end{equation}
such that for each $0\leq i \leq r-1$, $G_i$ is normal in $G$ and the quotient group $G_{i+1}/G_i$ is a minimal normal subgroup of $G/G_i$. If $M$ is a minimal normal subgroup of $G$, then define $\rho_M$ to be the homomorphism 
\begin{eqnarray*}
  \rho_M : G & \to & \Aut (M)\\
   g & \mapsto & (x \mapsto g x g^{-1})_{x\in M}.
\end{eqnarray*}
The kernel of $\rho_M$ is the centralizer $\Cen_G(M)$ of $M$ in $G$. So $\rho_M$ gives an isomorphism from $G/\Cen_G(M)$ to the subgroup $\rho_M(G)$ of $\Aut(M)$. In fact, since $M$ is a minimal normal subgroup of $G$, it is a direct product of isomorphic simple groups. If $M$ is a direct product of isomorphic abelian simple groups, i.e. an elementary abelian $p$-group, then $\rho_M(M)=\Inn(M)=1$; otherwise, $\rho_M(M)=\Inn(M)\simeq M$. Thus, $\Inn(M)$ is always a normal subgroup of $\rho_M(G)$ and $\rho_M(G) /\Inn(M) \simeq G/(M\cdot \Cen_G(M))$.

\begin{definition}
A \emph{chief factor pair} is a pair of finite groups $(M,A)$ such that $M$ is an irreducible $A$-group and the $A$-action on $M$ is faithful (hence $A$ is naturally a subgroup of $\Aut(M)$). In particular, the chief series (\ref{eq:chs}) gives a sequence of chief factor pairs $(G_{i+1}/G_i, \rho_{G_{i+1}/G_i}(G/G_i))$, and we call them \emph{chief factor pairs of the series (\ref{eq:chs})}.
\end{definition}

\begin{comment}
If two groups $M_1$ and $M_2$ are isomorphic, then any isomorphism $\alpha: M_1 \to M_2$ induces an isomorphism between the automorphism groups 
\begin{eqnarray}
  \alpha^*: \Aut(M_1) &\to& \Aut(M_2) \label{eq:alphastar}\\
  f & \mapsto& \alpha f \alpha^{-1}.\nonumber
\end{eqnarray}
It's easy to check that $\alpha^*$ maps $\Inn(M_1)$ to $\Inn(M_2)$, so $\alpha^*$ also induces an isomorphism between $\Out(M_1)$ and $\Out(M_2)$. \melanie{Can we cut this paragraph? Do we talk about $\alpha^*$ on Out?}
\end{comment}

\begin{definition}
Two chief factor pairs $(M_1, A_1)$ and $(M_2, A_2)$ are \emph{isomorphic} if there exists an isomorphism $\alpha: M_1 \to M_2$ such that the induced isomorphism $\alpha^*: \Aut(M_1)\to \Aut(M_2)$ maps $A_1$ to $A_2$.
\end{definition}

The following is an analog of the Jordan-H\"older Theorem.

\begin{lemma}\label{L:J-H}
Let $G$ be a finite group. Suppose there are two chief series of $G$:
\begin{eqnarray}
 && 1= G_0 \lhd G_1 \lhd \cdots \lhd G_r =G \label{eq:cs1} \\
 &\text{and}& 1= I_0 \lhd I_1 \lhd \cdots \lhd I_s=G. \label{eq:cs2}
\end{eqnarray}
Then
\begin{enumerate}%[label=(\roman*)]
  \item $r=s$;
  \item the list of isomorphism classes of chief factor pairs $\Big\{\Big(G_{i+1}/G_i , \rho_{G_{i+1}/G_i}(G/G_i) \Big)_{i=0}^{r-1}\Big\}$ is a rearrangement of the list $\Big\{\Big(I_{i+1}/I_i , \rho_{I_{i+1}/I_i}(G/I_i) \Big)_{i=0}^{s-1}\Big\}$.
\end{enumerate}
\end{lemma}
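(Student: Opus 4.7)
The plan is to adapt the classical Zassenhaus--Schreier proof of the Jordan--Hölder theorem, tracking the extra datum of the conjugation action on each factor. I would induct on $|G|$; the base case $|G|=1$ is trivial. For the inductive step, distinguish two cases on $G_1$ and $I_1$. If $G_1=I_1$, the two series descend to chief series of $G/G_1$ after dropping the first step, and for $i\geq 1$ the chief factor pair $(G_{i+1}/G_i,\rho_{G_{i+1}/G_i}(G/G_i))$ of $G$ is naturally identified with the corresponding pair computed inside $G/G_1$, so the inductive hypothesis applies directly.

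If $G_1\neq I_1$, minimality of each as a normal subgroup of $G$ forces $G_1\cap I_1=1$, and then $[G_1,I_1]\subseteq G_1\cap I_1=1$, so $G_1I_1=G_1\times I_1$ is normal in $G$ and the two subgroups centralize each other. Pick any chief series $G_1I_1\lhd K_3\lhd\cdots\lhd K_r=G$ of $G$ refining $G_1I_1\lhd G$, and form two interleaved chief series sharing this common tail:
$$\Sigma:\; 1\lhd G_1\lhd G_1I_1\lhd K_3\lhd\cdots\lhd G,\qquad \Sigma':\; 1\lhd I_1\lhd G_1I_1\lhd K_3\lhd\cdots\lhd G.$$
Applying the $G_1=I_1$ case to the pair $\bigl(\text{(\ref{eq:cs1})},\Sigma\bigr)$ and to $\bigl(\text{(\ref{eq:cs2})},\Sigma'\bigr)$, it suffices to compare the multisets of chief factor pairs of $\Sigma$ and $\Sigma'$, which differ only in their first two entries.

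The heart of the argument is the isomorphism of chief factor pairs $(G_1,\rho_{G_1}(G))\isom(G_1I_1/I_1,\rho_{G_1I_1/I_1}(G/I_1))$, together with the symmetric statement swapping $G_1$ and $I_1$. The natural map $\alpha:G_1\to G_1I_1/I_1$, $x\mapsto xI_1$, is a group isomorphism since $G_1\cap I_1=1$. Because $I_1$ centralizes $G_1$, one checks $\alpha(gxg^{-1})=g\alpha(x)g^{-1}$ for all $g\in G$, $x\in G_1$, so $\alpha$ is $G$-equivariant and hence $\alpha^*\rho_{G_1}(G)=\rho_{G_1I_1/I_1}(G)$. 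Moreover $I_1$ acts trivially on $G_1I_1/I_1$ (once more using $[G_1,I_1]=1$), so the $G$-action on $G_1I_1/I_1$ factors through $G/I_1$ and $\rho_{G_1I_1/I_1}(G)=\rho_{G_1I_1/I_1}(G/I_1)$. The main obstacle is precisely this swap step --- verifying that the conjugation images inside the automorphism groups really correspond under $\alpha^*$ and that the action descends to the correct quotient; everything else is standard bookkeeping on top of the classical theorem.
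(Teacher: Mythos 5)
Your proof is correct and follows essentially the same route as the paper: induction on $|G|$, the easy case $G_1=I_1$, and in the case $G_1\neq I_1$ the construction of two auxiliary chief series through $G_1I_1=G_1\times I_1$ with a common tail, reducing everything to the swap isomorphism of the first two chief factor pairs. The only (cosmetic) difference is that you verify the key isomorphism $(G_1,\rho_{G_1}(G))\isom(G_1I_1/I_1,\rho_{G_1I_1/I_1}(G/I_1))$ by checking directly that the natural map $\alpha$ is $G$-equivariant, whereas the paper establishes the same fact via the computation $\pi(\Cen_G(I_1))=\Cen_{G/G_1}(J_2/G_1)$; both are valid.
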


\begin{proof}
We prove this by induction on $|G|$. The case that $|G|=1$ is trivial. Assume the lemma is true for all groups of order less than $k$ and $G$ is a group of order $k$. If $G_1=I_1$ then
\begin{eqnarray*}
  && 1 \lhd G_2/G_1 \lhd \cdots \lhd G_r/G_1 =G/G_1\\
  &\text{and }& 1 \lhd I_2/I_1 \lhd \cdots I_s/I_1 = G/I_1
\end{eqnarray*}
are two chief series of $G/G_1$. So the lemma is proved for $G$ by the induction hypothesis.

Assume $G_1\neq I_1$. Since they are minimal normal subgroups, $G_1\cap I_1 = 1$ and $G_1I_1=G_1\times I_1$. Define $J_2$ to be the product $G_1I_1$. Then $J_2/G_1\simeq I_1$ is a minimal normal subgroup of $G/G_1$ and we can construct a chief series of $G$ passing through $G_1$ and $J_2$
\begin{equation} \label{eq:cs3}
    1\lhd G_1 \lhd J_2 \lhd J_3 \lhd \cdots \lhd J_t=G.
\end{equation}
Comparing chief series (\ref{eq:cs1}) and (\ref{eq:cs3}), it follows by the inductive hypothesis for the group $G/G_1$ that $r=t$ and 
  \begin{eqnarray}
    \left\{ \Big( G_{i+1}/G_i , \rho_{G_{i+1}/G_i} (G/G_i)\right)_{i=0}^{r-1}\Big\} &\sim& \Big\{ \Big( G_1, \rho_{G_1}(G)\Big), \Big(J_2/G_1, \rho_{J_2/G_1} (G/G_1)\Big), \label{eq:list} \\
    & &\Big(J_{i+1}/J_i, \rho_{J_{i+1}/J_i}(G/J_i)\Big)_{i=2}^{t-1} \Big\} \nonumber
  \end{eqnarray}
  where the symbol $\sim$ means ``is a rearrangement of''. Let $\pi$ be the quotient map $G\to G/G_1$. As $G_1\unlhd \Cen_G(I_1)$, if an element in $G$ centralizes $I_1$, then its image under $\pi$ centralizes $\pi(I_1)=J_2/G_1$. It follows that $\pi(\Cen_G(I_1))\subseteq \Cen_{G/G_1}(J_2/G_1)$. Conversely, if $a$ is an element in $G$ such that $\pi(a)\in \Cen_{G/G_1}(J_2/G_1)$, then for every $h \in I_1$, we have $\pi(aha^{-1})=\pi(h)$, which indicates that there exists $g\in G_1$ such that $aha^{-1}=hg$. But $I_1\unlhd G$, so $aha^{-1}\in I_1$. It follows from $I_1\cap G_1=1$ that $g=1$, and hence $a\in \Cen_G(I_1)$, which proves $\pi(\Cen_G(I_1))=\Cen_{G/G_1}(J_2/G_1)$. Thus we have
  \begin{eqnarray*}
    \faktor{G}{\Cen_G(I_1)}&\cong& \faktor{G/G_1}{\Cen_G(I_1)/G_1}\\
    &\cong& \faktor{G/G_1}{\Cen_{G/G_1}(J_2/G_1)}.
  \end{eqnarray*}
  Therefore the chief factor pairs $\Big(I_1, \rho_{I_1}(G)\Big)$ and $\Big(J_2/G_1, \rho_{J_2/G_1}(G/G_1)\Big)$ are isomorphic. So the list (\ref{eq:list}) is 
  \begin{equation} \label{list1}
    \sim \Big\{\Big(G_1, \rho_{G_1}(G)\Big), \Big(I_1, \rho_{I_1}(G) \Big), \Big(J_{i+1}/J_i, \rho_{J_{i+1}/J_i}(G/J_i)\Big)_{i=2}^{t-1}\Big\}.
  \end{equation}
   Similarly, by comparing the following chief series of $G$
  \begin{equation} \label{cs4}
    1\lhd I_1 \lhd J_2 \lhd J_3 \lhd \cdots \lhd J_t=G.
  \end{equation}
  with (\ref{eq:cs2}), we finish the proof of the lemma.
\end{proof}

\begin{definition}
 If $G$ is a finite group, then define $\CF(G)$ to be the set consisting of all isomorphism classes of chief factor pairs of a chief series of $G$ ($\CF(G)$ does not depend on the choice of chief factor series by Lemma~\ref{L:J-H}). If $T$ is a set of finite groups, then $$\CF(T):=\bigcup\limits_{G\in T} \CF(G).$$
\end{definition}

The following lemma shows that every factor in $R$ comes from $\CF(\bS)$.

\begin{lemma}\label{L:R-cfp}
Let $S$ be a finite set of finite groups, and $R$, $F$ and $H$ as defined at the beginning of this section. If $G$ is an irreducible $F$-subgroup of $R$, then $(G, \rho_G(F))\in \CF(\bS)$ and $\rho_G(F)/\Inn(G)$ is isomorphic to a quotient of $H$.
\end{lemma}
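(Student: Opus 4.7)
My plan is to show that any irreducible $F$-subgroup $G$ of $R$ is automatically a minimal non-trivial normal subgroup of $F$, so that $(G,\rho_G(F))$ occurs as the first chief factor pair of some chief series of $F$; then to use the direct product decomposition of $R$ from Lemma~\ref{L:irred} to bound the centralizer quotient $\rho_G(F)/\Inn(G)$ above by a quotient of $H$.

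First I would observe that, since $G$ is an $F$-subgroup of $R\subset F$, conjugation by $F$ stabilizes $G$, so $G$ is normal in $F$. If $G'\subset G$ is any non-trivial subgroup normal in $F$, then $G'$ is in particular an $F$-subgroup of $G$, so by irreducibility $G'=G$; hence $G$ is a minimal normal subgroup of $F$. Extend $1\lhd G$ to a chief series of $F$,
$$1\lhd G=F_1\lhd F_2\lhd\cdots\lhd F_r=F,$$
whose first chief factor pair is exactly $(G,\rho_G(F))$. Because $S$ is finite, $(\hat F_n)^{\bS}$ is finite by the Neumann result quoted at the start of Section~\ref{S:setup}, and a finite pro-$\bS$ group lies in $\bS$; hence $F=(\hat F_n)^{\bS}/M\in\bS$, so $(G,\rho_G(F))\in\CF(F)\subset\CF(\bS)$.

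For the second claim I would invoke the identity $\rho_G(F)/\Inn(G)\cong F/(G\cdot\Cen_F(G))$ recorded in the paragraph preceding Lemma~\ref{L:J-H}. Since $H\cong F/R$, showing this is a quotient of $H$ is equivalent to showing $R\subset G\cdot\Cen_F(G)$. By Lemma~\ref{L:irred}, write $R=\prod_{i=1}^m G_i$ as an internal direct product of irreducible $F$-subgroups and split into two cases. If $G=G_j$ for some $j$, then $\prod_{i\ne j}G_i\subset\Cen_F(G_j)$ by the direct product structure, and $R=G_j\cdot\prod_{i\ne j}G_i\subset G\cdot\Cen_F(G)$. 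Otherwise, $G\ne G_i$ for every $i$; then $G\cap G_i$ is a normal subgroup of $F$ contained in the minimal normal subgroup $G$, so $G\cap G_i=1$ (equality to $G$ would force $G\subset G_i$ and then $G=G_i$ by minimality of $G_i$). Since both $G$ and $G_i$ are normal in $F$, the commutator $[G,G_i]$ lies in $G\cap G_i=1$, so $G_i\subset\Cen_F(G)$ for every $i$, and $R\subset\Cen_F(G)\subset G\cdot\Cen_F(G)$.

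I do not expect any serious obstacle; the argument is a short chain of routine observations. The only subtle point is that an irreducible $F$-subgroup $G$ of $R$ need not literally be one of the direct factors produced by Lemma~\ref{L:irred}, and this is handled cleanly by the standard fact that two distinct minimal normal subgroups of a group centralize each other.
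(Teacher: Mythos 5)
Your proof is correct and follows essentially the same route as the paper's: identify $G$ as a minimal normal subgroup of the level-$S$ group $F$ to place $(G,\rho_G(F))$ in $\CF(\bS)$, then show $R\subseteq G\cdot\Cen_F(G)$ via the direct product decomposition of $R$ so that $\rho_G(F)/\Inn(G)\cong F/(G\cdot\Cen_F(G))$ is a quotient of $H$. The paper compresses this into two sentences; your case analysis for when $G$ is not a literal direct factor is a detail the paper leaves implicit, and it is handled correctly.
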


\begin{proof}
Since $F$ is a finite level $S$ group and $G$ is a minimal normal subgroup of $F$, we have $(G, F/\Cen_F(G)) \in \CF(\bS)$. Further, $R$ is a direct product of irreducible $F$-groups, so $R$ is contained in $\Cen_F(G) \cdot G$ and it follows that $(F/\Cen_F(G))/\Inn(G) = F/(\Cen_F(G)\cdot G)$ is a quotient of $H$.
\end{proof}

In the rest of this section, we will bound the size of chief factor pairs.

\begin{lemma}\label{L:CFbarS}
If $S$ is a set of finite groups that is closed under taking subgroups and quotients, then $\CF(\bS)=\CF(S)$.
\end{lemma}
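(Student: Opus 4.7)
The inclusion $\CF(S)\subseteq \CF(\bS)$ is immediate from the definitions, so the content is the reverse inclusion. My plan is first to give a concrete description of $\bS$: the class of groups expressible as quotients of subgroups of finite direct products of elements of $S$ contains $S$ (via singleton products) and is readily checked to be closed under quotients, subgroups, and finite direct products, so it equals $\bS$. Thus any $G\in\bS$ can be written $G=K/L$ with $K\leq G_1\times\cdots\times G_k$ and each $G_i\in S$. I would then prove $\CF(\bS)\subseteq \CF(S)$ in two reductions: $\CF(G)\subseteq \CF(K)\subseteq \CF(S)$.

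For the first reduction, let $\pi\colon K\to G$ be the quotient map (so $\ker\pi=L$) and let $(M,A)$ be a chief factor pair of $G$, realized as $M=M^{**}/N$ for some $N\lhd M^{**}\lhd G$ with $M^{**}/N$ minimal normal in $G/N$. I would pull $N$ and $M^{**}$ back through $\pi$ and complete to a chief series of $K$ passing through $L$, $\pi^{-1}(N)$, $\pi^{-1}(M^{**})$. The step $\pi^{-1}(N)\lhd \pi^{-1}(M^{**})$ has quotient isomorphic to $M$, which is minimal normal in $K/\pi^{-1}(N)=G/N$; the associated chief factor pair is $(M,(K/\pi^{-1}(N))/\Cen_{K/\pi^{-1}(N)}(M))=(M,A)$, placing $(M,A)$ in $\CF(K)$ via Lemma~\ref{L:J-H}.

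For the second reduction, let $(M,A)$ be a chief factor pair of $K\leq G_1\times\cdots\times G_k$. Since the map $(\pi_1,\ldots,\pi_k)\colon K\hookrightarrow \prod_i G_i$ is injective and $M\neq 1$, some $\pi_i(M)\neq 1$. The subgroup $M\cap \ker\pi_i$ is $K$-invariant in the irreducible $K$-group $M$, so it must be trivial, giving an isomorphism $\pi_i|_M\colon M\to \pi_i(M)$. Writing $G_i'=\pi_i(K)$, which lies in $S$ by closure under subgroups, the $K$-conjugation action on $M$ transports to the $G_i'$-conjugation action on $\pi_i(M)$ under this isomorphism, because $\pi_i(kmk^{-1})=\pi_i(k)\pi_i(m)\pi_i(k)^{-1}$. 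Irreducibility of the $K$-action then makes $\pi_i(M)$ a minimal normal subgroup of $G_i'$, and the images of $K$ and $G_i'$ in $\Aut(M)=\Aut(\pi_i(M))$ coincide, so $G_i'/\Cen_{G_i'}(\pi_i(M))=A$ and hence $(M,A)\in \CF(G_i')\subseteq \CF(S)$.

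The main subtlety I anticipate is the chief-factor-pair identification in the second reduction: I have to verify that the action $K\to \Aut(M)$ factors as $K\to G_i'\to \Aut(\pi_i(M))$ in such a way that the two centralizer quotients become literally the same subgroup of $\Aut(M)$ under the natural identification $\Aut(M)=\Aut(\pi_i(M))$. Everything else is routine bookkeeping with Lemma~\ref{L:J-H}.
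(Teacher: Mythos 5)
Your QSP normal form for $\bS$ (quotients of subgroups of finite direct products of members of $S$) and your first reduction $\CF(K/L)\subseteq\CF(K)$ are both correct. The gap is in the second reduction. A chief factor pair of $K$ is attached to a link $K_j\lhd K_{j+1}$ of a chief series, and for $j\geq 1$ the group $M=K_{j+1}/K_j$ is a section of $K$, not a subgroup, so the expressions $\pi_i(M)$, $M\cap\ker\pi_i$, and $\pi_i|_M$ are not defined. As written, your argument only treats the bottom link, i.e. pairs $(M,A)$ with $M$ a minimal normal subgroup of $K$. You cannot repair this by replacing $K$ with $K/K_j$: that group is only a quotient of a subgroup of a product of members of $S$, and $\mathrm{QSP}(S)$ is in general strictly larger than $\mathrm{SP}(S)$, so you would be back at the start of your two-step reduction rather than inside its second step. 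Note moreover that your first reduction feeds exactly such non-bottom links into the second one (the pulled-back link $\pi^{-1}(N)\lhd\pi^{-1}(M^{**})$ sits above $L$), so the composite argument genuinely requires the general case, not just minimal normal subgroups.

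The gap can be closed in the spirit of your argument, but it needs one more idea: choose the chief series of $K$ to refine the filtration $D_m=K\cap\bigcap_{i\leq m}\ker\pi_i$. Then every link $K_j\lhd K_{j+1}$ lying between $D_m$ and $D_{m-1}$ satisfies $K_{j+1}\cap\ker\pi_m\leq K_j$, whence $K_{j+1}/K_j\cong \pi_m(K_{j+1})/\pi_m(K_j)$ compatibly with the conjugation actions of $K$ and of $G'=\pi_m(K)\in S$, and the pair is a chief factor pair of $G'$; Lemma~\ref{L:J-H} then makes the special choice of series harmless. That extra step is where the real content of the lemma lies. For comparison, the paper proceeds differently: it inducts over the closure operations, and for the hard (subgroup) case it explicitly constructs a subgroup $J'$ of a group $G'\in S$ realizing each pair arising below a minimal normal subgroup. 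The identification of acting groups that you singled out as the main subtlety, and did verify correctly for $M$ minimal normal, is the easy half of that step.
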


\begin{proof}
Since $\bS$ is the closure of $S$ under taking finite direct products, quotients and subgroups, it suffices to show that none of these three actions creates new chief factor pairs not belonging to $\CF(S)$.

First, taking direct products and quotients does not create new chief factor pairs. If $G$ and $J$ are finite groups with chief series $1\lhd G_1 \lhd \cdots \lhd G_r = G$ and $1\lhd J_1 \lhd \cdots \lhd J_s = J$. Then the following chief series of $G\times J$
$$1 \lhd G_1 \times 1 \lhd \cdots G\times 1 \lhd G\times J_1 \lhd \cdots \lhd G\times J$$
implies that $\CF(G\times J)=\CF(G)\cup \CF(J)$. If $N$ is a normal subgroup of $G$, then $\CF(G/N)\subseteq \CF(G)$ since we can always find a chief series of $G$ passing through $G/N$.

Finally, assume $J$ is a subgroup of $G$ for $G\in \bS$ such that $\CF(G)\subseteq \CF(S)$. We want to prove $\CF(J)\subseteq \CF(S)$. Let $1 \lhd G_1 \lhd \cdots \lhd G_r = G$ be a chief series of $G$. We can construct a chief series of $J$ that passes through $G_i \cap J$ for every $i=1, \cdots, r$. The chief factor pairs achieved from the elements between $G_i \cap J$ and $G_{i+1}\cap J$ are achieved from the group $J/(G_i\cap J)\simeq (J\cdot G_i)/ G_i$, which is a subgroup of $G/G_i$. Thus it's enough to consider the positions between $1$ and $G_1\cap J$. Since $(G_1, \rho_{G_1}(G))\in \CF(G) \subseteq \CF(S)$, there is a group $G'\in S$ and a minimal subgroup $G'_1$ of $G'$ such that the chief factors $(G_1, \rho_{G_1}(G))$ and $(G'_1, \rho_{G'_1}(G'))$ are isomorphic, i.e. $\exists$ $\alpha: G_1\overset{\sim}{\to}G'_1$ such that $\alpha^*: \Aut(G_1)\overset{\sim}{\to} \Aut(G'_1)$ maps $\rho_{G_1}(G)$ to $\rho_{G'_1}(G')$. Define $A:=\rho_{G_1}(J)=(J\cdot \Cen_G(G_1))/\Cen_G(G_1)$ that is a subgroup of $\rho_{G_1}(G)$. Note that the action of $A$ on $G_1$ actually stabilizes $G_1\cap J$. Let $J':=\rho_{G'_1} ^{-1}(\alpha^*(A))$ and $J'_1=\alpha(G_1\cap J)$. So $J'$ is a subgroup of $G'$ satisfying the following short exact sequence 
$$1 \to \Cen_{G'}(G'_1) \to J' \to \alpha^*(A)\to 1.$$
and $J'_1$ is a subgroup of $G'_1\cap J'$. 

Since $\Cen_{G'}(G_1')\leq \Cen_{J'}(G'_1\cap J')$, the action of $J'$ via conjugation on $G'_1\cap J'$ factors through $\alpha^*(A)$. Also, since the $\alpha^*(A)$ action on $G'_1$ stabilizes $J'_1$, we have that $J'_1$ is a normal subgroup of $J'$. Because $G_1\cap J$ with the action of $A$ is isomorphic to $J'_1$ with the action of $\alpha^*(A)$, every chief factor pair of $G$ achieved from positions between 1 and $G_1\cap J$ is also a chief factor pair of $J'$ achieved via a series passing through $J'_1$. Finally, $J'$ as a subgroup of $G'$ belongs to $S$, so $\CF(J)\subseteq \CF(S)$ and we prove the lemma.

\end{proof}

\begin{corollary}\label{C:nonewsimple}
If $S$ is a set of finite groups, and $\Gamma \in \bar{S}$ is a simple group, then $\Gamma$ is in the closure of $S$ under taking subgroups and quotients.
\end{corollary}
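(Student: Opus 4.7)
The plan is to reduce immediately to Lemma~\ref{L:CFbarS}. Let $T$ denote the closure of $S$ under taking subgroups and quotients, so that $T$ itself is already closed under these two operations, while $\bar{T}=\bar{S}$ since further closing $T$ under finite direct products recovers the variety $\bar{S}$. What we must show is exactly that $\Gamma\in T$.

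Applying Lemma~\ref{L:CFbarS} to $T$ (which is allowed since $T$ is closed under subgroups and quotients), we get
\[
\CF(\bar{S})=\CF(\bar{T})=\CF(T).
\]
Because $\Gamma$ is simple, its only chief series is $1\lhd \Gamma$, so $\CF(\Gamma)$ consists of the single isomorphism class of the chief factor pair $(\Gamma,\rho_\Gamma(\Gamma))$. Since $\Gamma\in \bar{S}$, this pair must already appear in $\CF(T)$: there exists some $G\in T$ together with a chief series $1=G_0\lhd G_1\lhd\cdots \lhd G_r=G$ and an index $i$ such that the chief factor pair $(G_{i+1}/G_i,\rho_{G_{i+1}/G_i}(G/G_i))$ is isomorphic as a chief factor pair to $(\Gamma,\rho_\Gamma(\Gamma))$. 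By the definition of isomorphism of chief factor pairs, the first components are already isomorphic as groups, i.e.\ $G_{i+1}/G_i\cong \Gamma$.

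Finally, since $T$ is closed under subgroups we have $G_{i+1}\in T$, and since $T$ is closed under quotients we have $G_{i+1}/G_i\in T$. Hence $\Gamma\in T$, as required. The only genuine content of the argument is Lemma~\ref{L:CFbarS}; once that is in hand, the only subtlety is to remember that the first component of the chief factor pair records the chief factor as a group, so exhibiting $\Gamma$ as the first component of a chief factor pair of some $G\in T$ is exactly exhibiting $\Gamma$ as a subquotient of $G$.
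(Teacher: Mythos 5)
Your proposal is correct and is essentially the paper's own argument: the paper likewise notes that $(\Gamma,\Inn(\Gamma))\in\CF(\bS)$ and invokes Lemma~\ref{L:CFbarS} (implicitly applied to the subgroup-and-quotient closure $T$ of $S$, exactly as you make explicit) to realize $\Gamma$ as a chief factor, hence a subquotient, of some group in $T$. Your write-up just fills in the routine details ($\bar T=\bar S$, and extracting $G_{i+1}/G_i\cong\Gamma$ from the chief factor pair) that the paper leaves implicit.
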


\begin{proof}
If $\Gamma\in \bS$ is a simple group, then $(\Gamma, \Inn(\Gamma))\in \CF(\bS)$. By Lemma~\ref{L:CFbarS}, $\Gamma$ is in the closure of $S$ under taking subgroups and quotients.
\end{proof}

\begin{corollary}\label{C:CfpFin}
Let $S$ be a finite set of finite groups. Then $\CF(\bS)$ is a finite set. Moreover, if $\ell$ is the upper bound of the orders of groups in $S$, then for any pair $(M,A)\in \CF(\bS)$, the quotient $A/\Inn(M)$ is of level $\ell-1$.  
\end{corollary}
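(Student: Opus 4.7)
The plan is to reduce to a concrete finite family of groups using Lemma~\ref{L:CFbarS}, and then to run a short order-counting argument.

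First I will replace $S$ by its closure $S'$ under subgroups and quotients. Since $S$ is a finite set of finite groups of order at most $\ell$, and every finite group has only finitely many subgroups and finitely many isomorphism classes of quotients, $S'$ is again finite and consists of groups of order at most $\ell$. Applying Lemma~\ref{L:CFbarS}, I have $\CF(\bS)=\CF(\bar{S'})=\CF(S')$. Finiteness of $\CF(\bS)$ is then immediate: any $G\in S'$ admits only finitely many chief series (each of length at most $\log_2|G|$), so $\CF(G)$ is finite, and $\CF(S')=\bigcup_{G\in S'}\CF(G)$ is a finite union of finite sets.

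Next I will bound $A/\Inn(M)$ for any $(M,A)\in \CF(\bS)=\CF(S')$. Unwinding the definition, there exist $G\in S'$ and a chief series of $G$ with, at some position $i$, $M=G_{i+1}/G_i$ a minimal normal subgroup of $\bar G:=G/G_i$ and $A=\rho_M(\bar G)\cong \bar G/\Cen_{\bar G}(M)$. Since the image of $M$ under $\rho_M$ is $\Inn(M)$, the third isomorphism theorem identifies
$$A/\Inn(M)\;\cong\;\bar G\big/\bigl(M\cdot \Cen_{\bar G}(M)\bigr),$$
which is in particular a quotient of $\bar G/M$. Because $M$ is a nontrivial minimal normal subgroup, $|M|\geq 2$, so
$$|A/\Inn(M)|\;\leq\;|\bar G|/|M|\;\leq\; \ell/2\;\leq\;\ell-1$$
whenever $\ell\geq 2$ (and the case $\ell\leq 1$ is vacuous, since then $\bS$ contains only the trivial group and $\CF(\bS)=\emptyset$). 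Thus $A/\Inn(M)\in S_{\ell-1}\subseteq \bar S_{\ell-1}$, i.e.\ $A/\Inn(M)$ is of level $\ell-1$, as required.

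The main obstacle is essentially already handled upstream: Lemma~\ref{L:CFbarS} (with Lemma~\ref{L:J-H} behind it) is what makes the reduction $\CF(\bS)=\CF(S')$ legitimate, and without it one would have to worry about chief factor pairs produced by arbitrarily large direct products of groups in $S$. Granted that lemma, the remaining argument is routine; the only mild subtlety is correctly identifying $A/\Inn(M)$ with $\bar G/(M\cdot \Cen_{\bar G}(M))$ and then exploiting the bare inequality $|M|\geq 2$ to get the factor of $2$ that upgrades ``level $\ell$'' to ``level $\ell-1$''.
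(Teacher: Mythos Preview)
Your proof is correct and follows essentially the same approach as the paper: reduce via Lemma~\ref{L:CFbarS} to the closure of $S$ under subgroups and quotients, then do an order count inside a single $G\in S'$ to bound $|A/\Inn(M)|$. The only minor difference is cosmetic: the paper splits into the abelian and non-abelian cases for $M$ to obtain $|M|\cdot|A/\Inn(M)|\le |G|\le \ell$, whereas you get the same bound in one stroke by observing that $A/\Inn(M)\cong \bar G/(M\cdot\Cen_{\bar G}(M))$ is a quotient of $\bar G/M$.
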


\begin{proof}
Without lost of generality, let's assume $S$ is closed under taking subgroups and quotients. By Lemma~\ref{L:CFbarS}, $\CF(\bS)=\CF(S)$ is finite, and for any chief factor pair $(M,A)\in \CF(\bS)$, there is a group $G\in S$ such that $(M,A)\in \CF(G)$. If $M$ is abelian, then $|M||A|\leq |G| \leq \ell$; otherwise, $M$ is non-abelian and $|A|=|M||A/\Inn(M)|\leq |G| \leq \ell$. In either case, we have $|A/\Inn(M)|\leq\frac{\ell}{2}\leq \ell-1$.
\end{proof}

\begin{remark}
	The statement in Corollary~\ref{C:CfpFin} remains true if $\ell-1$ is replaced by $\lfloor \ell/2 \rfloor$ but we will not use that stronger statement.
\end{remark}

\section{Counting maximal quotients of irreducible $F$-groups}\label{S:countqt}

In order to apply Theorem~\ref{T:probfrommult} to a group that we know, abstractly,  to be a product of irreducible $F$-groups, we need to know the multiplicities of the various irreducible $F$-groups in the product.  In this section, we relate those multiplicities to a count of surjections.

\begin{theorem}\label{T:countRsub}
Let $G_i$ be finite irreducible $F$-groups for $i=1,\dots, k$ such that $G_i$ and $G_j$ are not isomorphic for $i\ne j$. Then if $G_j$ is abelian
$$
\# \Sur_F\left(\prod_{i=1}^k G_i^{m_i} , G_j \right) =h_F(G_j)^{m_j}-1
$$
and if $G_j$ is non-abelian
$$
\# \Sur_F\left(\prod_{i=1}^k G_i^{m_i} , G_j \right) = m_j|\Aut_F(G_j)|.
$$
\end{theorem}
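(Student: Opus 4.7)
The plan is to first reduce the count to one involving only the single factor $G_j$, and then handle the abelian and non-abelian cases separately using the irreducibility of $G_j$ as an $F$-group.

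First I would show that any $F$-surjection $\phi \colon \prod_{i=1}^k G_i^{m_i} \to G_j$ must annihilate every copy of $G_i$ for $i \neq j$, and hence factor through the projection to $G_j^{m_j}$. The key observation is that each factor $G_i$ (any of its $m_i$ embedded copies) is a normal $F$-subgroup of $\prod_l G_l^{m_l}$, so under the $F$-surjection $\phi$ its image is a normal $F$-subgroup of $G_j$. By irreducibility of $G_j$, this image is either trivial or all of $G_j$; but the latter, combined with Lemma~\ref{L:preSchur} applied to the restriction $G_i \to G_j$, would force $G_i \simeq G_j$ as $F$-groups, contradicting our hypothesis. Thus $\phi$ kills every off-diagonal factor, and the problem reduces to counting $F$-surjections $G_j^{m_j} \twoheadrightarrow G_j$.

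For the abelian case, $\Hom_F(G_j^{m_j}, G_j) \simeq \Hom_F(G_j, G_j)^{m_j}$, which has cardinality $h_F(G_j)^{m_j}$. Any such $F$-homomorphism $\psi$ has image an $F$-subgroup of $G_j$, which is automatically normal since $G_j$ is abelian; by irreducibility the image is $0$ or $G_j$, so $\psi$ is either the zero map or a surjection. Subtracting the unique zero map gives $h_F(G_j)^{m_j} - 1$ surjections.

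For the non-abelian case, I would write an $F$-homomorphism $\psi \colon G_j^{m_j} \to G_j$ as $\psi(a_1,\dots,a_{m_j}) = \psi_1(a_1)\cdots\psi_{m_j}(a_{m_j})$, where $\psi_l$ is the restriction to the $l$-th factor; for $\psi$ to be a well-defined homomorphism, the images $\psi_l(G_j)$ must pairwise commute. Each $\psi_l$ has image a normal $F$-subgroup of $G_j$ (same argument as in Step 1), so by irreducibility $\psi_l$ is either trivial or, by Lemma~\ref{L:preSchur}, an $F$-automorphism of $G_j$. If two of the $\psi_l$ were automorphisms, their images would both be $G_j$ and would commute, forcing $G_j$ to be abelian — a contradiction. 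Hence at most one $\psi_l$ is nontrivial, and for $\psi$ to be surjective exactly one must be an $F$-automorphism while the others are trivial. Choosing the index $l \in \{1,\dots,m_j\}$ and then the $F$-automorphism yields $m_j \cdot |\Aut_F(G_j)|$ surjections, completing the count. The main subtlety in this argument is the commutativity constraint in the non-abelian case, but it is handled cleanly because an irreducible non-abelian $F$-group has trivial center (the center is a normal $F$-subgroup).
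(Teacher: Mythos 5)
Your proof is correct and follows essentially the same route as the paper: the reduction to the $j$-th factor via the non-isomorphism hypothesis and the Schur-type Lemma~\ref{L:preSchur} is exactly the paper's Lemma~\ref{L:justj}, and your case analysis of $\Hom_F(G_j^{m_j},G_j)$ for abelian versus non-abelian $G_j$ is the paper's Lemma~\ref{L:morto1}. The only cosmetic point is that for a not-necessarily-surjective $\psi$ the image of the $l$-th factor need not be normal in $G_j$, so the claim that each nontrivial $\psi_l$ is an automorphism is better justified by the kernel argument ($\ker\psi_l$ is a normal $F$-subgroup of $G_j$, so $\psi_l$ is trivial or injective, hence bijective by finiteness), which is what the paper does.
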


This theorem follows immediately from Lemma~\ref{L:preSchur} and the following lemmas.

\begin{lemma}\label{L:justj}
Let $G_i$ be finite irreducible $F$-groups for $i=1,\dots, k$ such that for $i\ne j$, we have that $G_i$ and $G_j$ are not isomorphic.  The  restriction map
$$
\Sur_F\left(\prod_{i=1}^k G_i^{m_i} , G_j \right) \ra \Hom_F\left( G_j^{m_j} , G_j \right)
$$
is a bijection to $\Sur_F\left( G_j^{m_j} , G_j \right)\sub \Hom_F\left( G_j^{m_j} , G_j \right)$.
\end{lemma}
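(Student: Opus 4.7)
The plan is to show that any $F$-surjection $\phi : \prod_{i=1}^k G_i^{m_i} \to G_j$ must annihilate the subgroup $\prod_{i \neq j} G_i^{m_i}$, so that $\phi$ factors through the projection to $G_j^{m_j}$ and is therefore determined by its restriction to that factor. The essential input is Lemma~\ref{L:preSchur}.

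First, I would observe that each of the $m_i$ direct factor copies of $G_i$ inside $G_i^{m_i}$, viewed as a subgroup of the full product, is a normal $F$-subgroup: normality holds because it is a direct factor of the ambient product, and $F$-stability holds because the $F$-action on $G_i^{m_i}$ is diagonal on the copies of the irreducible $F$-group $G_i$. Hence for any $\phi \in \Sur_F(\prod_{i=1}^k G_i^{m_i}, G_j)$, the image $\phi(G_i)$ is a normal $F$-subgroup of $G_j$, and by Lemma~\ref{L:preSchur} the restriction of $\phi$ to this copy of $G_i$ is either trivial or an $F$-isomorphism onto $G_j$. When $i \neq j$ the latter is impossible, since $G_i$ and $G_j$ are non-isomorphic as $F$-groups, so $\phi$ must kill every such copy, and therefore vanishes on the entire subgroup $\prod_{i\neq j} G_i^{m_i}$.

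Consequently, $\phi$ factors through the natural $F$-surjection $\prod_{i=1}^k G_i^{m_i} \to G_j^{m_j}$. Since $\phi$ is surjective, its restriction $\phi|_{G_j^{m_j}}$ is also surjective, which shows that the restriction map lands in $\Sur_F(G_j^{m_j}, G_j)$. Injectivity is then immediate: two elements of $\Sur_F(\prod_{i=1}^k G_i^{m_i}, G_j)$ that agree on $G_j^{m_j}$ necessarily agree on $\prod_{i\neq j} G_i^{m_i}$, where both are trivial, and hence agree on the whole product.

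For surjectivity of the restriction map onto $\Sur_F(G_j^{m_j}, G_j)$, I would, given any $\psi \in \Sur_F(G_j^{m_j}, G_j)$, simply pre-compose $\psi$ with the canonical projection $\prod_{i=1}^k G_i^{m_i} \to G_j^{m_j}$; this is a composition of $F$-surjections and thus an element of $\Sur_F(\prod_{i=1}^k G_i^{m_i}, G_j)$ whose restriction to $G_j^{m_j}$ is $\psi$. The only real content of the argument lies in applying Lemma~\ref{L:preSchur} in the first step, so I do not anticipate a genuine obstacle.
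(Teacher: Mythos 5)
Your proof is correct and follows essentially the same route as the paper's: apply Lemma~\ref{L:preSchur} to the restriction of a surjection to each direct factor copy of $G_i$ (whose image is normal in $G_j$) to conclude it is trivial for $i\ne j$, giving injectivity, and obtain surjectivity by precomposing with the projection onto $G_j^{m_j}$. Your write-up just spells out the details (normality and $F$-stability of each factor, the factorization through the projection) that the paper leaves implicit.
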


\begin{proof}
Note that in a surjection, each $G_i$ must go to a normal subgroup of $G_j$, and so by Lemma~\ref{L:preSchur} the restriction to every $G_i$ factor for $i\ne j$ is trivial.  So that proves the above restriction map is injective.  
The restriction map is surjective to $\Sur_F\left( G_j^{m_j} , G_j \right)$ since $G_j^{m_j}$ is a quotient of $\prod_{i=1}^k G_i^{m_i}$. 
\end{proof}

\begin{lemma}\label{L:morto1}
Let $G$ be a finite irreducible $F$-group and $m$ a positive integer.  
We have
$$
\Hom_F(G^m, G) \sub \Hom_F(G,G)^m
$$
by restriction to each factor.
If $G$ is abelian, then this inclusion is an equality.  If $G$ is non-abelian, then we have that $\Hom_F(G^m, G) $ is the subset of the
$m$-tuples $\Hom_F(G,G)^m$ where at most $1$ coordinate is a non-trivial morphism in $\Hom_F(G,G)$.
The only homomorphism that is not surjective among those above is the trivial morphism.  
\end{lemma}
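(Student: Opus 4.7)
The plan is to split the proof into the inclusion, the abelian case, the non-abelian case, and then the surjectivity claim, handling them roughly in that order.

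First, restriction to each factor gives a map $\Hom_F(G^m,G) \to \Hom_F(G,G)^m$. This is injective because the factors $G_1,\dots,G_m$ generate $G^m$, so any $\phi$ is determined by its restrictions $\phi_i$. For the abelian case, I would exhibit a right inverse: given $(\phi_1,\dots,\phi_m) \in \Hom_F(G,G)^m$, define $\phi(g_1,\dots,g_m):=\phi_1(g_1)\cdots\phi_m(g_m)$. This is well-defined and a group homomorphism precisely because $G$ is abelian, and it is an $F$-morphism since each $\phi_i$ is.

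The non-abelian case is the main obstacle. The key observation is that any non-trivial $\phi_i:G\to G$ must already be an isomorphism: $\ker\phi_i$ is a normal $F$-subgroup of $G$, hence trivial or all of $G$ by irreducibility; non-triviality rules out the latter, so $\phi_i$ is injective, and then surjective since $G$ is finite. Now suppose for contradiction that two restrictions $\phi_i$ and $\phi_j$ with $i\neq j$ are both non-trivial, so both are surjective, i.e.\ $\phi_i(G)=G=\phi_j(G)$. Because the factors $G_i$ and $G_j$ commute elementwise in $G^m$ and $\phi$ is a homomorphism, the images $\phi_i(G)$ and $\phi_j(G)$ also commute elementwise in $G$. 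But that says every element of $G$ commutes with every element of $G$, i.e.\ $G$ is abelian, contradicting our assumption. Hence at most one $\phi_i$ is non-trivial. Conversely, any tuple in which at most one coordinate is non-trivial does arise from a homomorphism: if only $\phi_i$ is non-trivial, the rule $\phi(g_1,\dots,g_m):=\phi_i(g_i)$ is a well-defined $F$-morphism since the other factors act trivially.

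For the final statement about surjectivity, the same dichotomy argument applies in both cases: whenever $\phi_i$ is non-trivial, its kernel is a normal $F$-subgroup (automatically normal in the abelian case, and shown above in the non-abelian case), which by irreducibility must be trivial; then $\phi_i$ is an $F$-automorphism by finiteness, and in particular surjective. Thus if $\phi$ is non-trivial, some $\phi_i$ is a non-trivial automorphism, so $\phi(G^m)\supseteq \phi_i(G)=G$, i.e.\ $\phi$ is surjective. The only non-surjective morphism in $\Hom_F(G^m,G)$ is therefore the trivial one. The only delicate point throughout is the commuting-images step in the non-abelian case; everything else is formal consequences of irreducibility and finiteness.
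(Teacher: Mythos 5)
Your proposal is correct and follows essentially the same route as the paper: the abelian case via the product formula $\phi(g_1,\dots,g_m)=\prod_i\phi_i(g_i)$, the non-abelian case via the observation that the images $\phi_i(G)$ and $\phi_j(G)$ commute elementwise while irreducibility forces each image to be $1$ or all of $G$, and surjectivity from the fact that a non-trivial $F$-endomorphism of an irreducible $F$-group has trivial kernel and is therefore an automorphism. You are somewhat more explicit than the paper about injectivity of the restriction map and about why a non-trivial $\phi_i$ has image all of $G$, but these are elaborations of the same argument, not a different one.
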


\begin{proof}
If $G$ is abelian, then for $\phi_i\in \Hom_F(G,G)$, we have a morphism $\phi: G^m \ra G$ such that $\phi(a_1,\dots,a_m)=\prod_{i=1}^m \phi_i(a_i)$.
Note for $\phi\in \Hom_F(G^m, G)$, with restrictions $\phi_i$ to the factors, we have that $a\in\phi_i(G)$ and $b\in\phi_j(G)$ commute for $i\neq j$.  Since $\phi_i(G)$ is $1$ or $G$, if $G$ is non-abelian we see that at most one $\phi_i$ can be non-trivial.  Moreover, clearly the $m$-tuples $\Hom_F(G,G)^m$ where at most $1$ coordinate is a non-trivial morphism in $\Hom_F(G,G)$  give elements of $\Hom_F(G^m,G)$.  
For an $F$-morphism $G\ra G$, if it is non-trivial, it must be injective (since its kernel is a normal $F$-subgroup), and thus surjective.
\end{proof}

\section{Determination of $\mu_{u,n}$ on basic open sets}\label{S:basics}

Recall that $X_{u,n}$ is the random group obtained by taking quotient of $\hat{F}_n$ by $n+u$ independent random relations from Haar measure. The goal of this section is to prove Theorem~\ref{T:calc}, in which we will give $\Prob((X_{u,n})^{\bS}\isom H)$ for every finite set $S$ and finite level $S$ group $H$, i.e. determine the measures of the basic open sets in the distributions coming from our random groups. 
Throughout this section, we assume $n\geq 1$, $S$ is a finite set of finite groups, $H$ is a finite level $S$ group and 
$$1\ra R \ra F \ra H \ra 1$$
is the fundamental short exact sequence associated to $S$, $n$ and $H$. 
For any abelian irreducible $H$-group $G$, we define $m(S,n,H,G)$ to be the multiplicity of $G$ in $R$ as an $H$-group under conjugation (see Lemma~\ref{L:irred}). Let $G$ be a non-abelian finite group. Let $G_i$ be the irreducible $F$-group structures one can put on $G$. Then we define $m(S,n,H,G)$ to be the sum (over $i$) of the multiplicity of the $G_i$ in $R$ as an $F$-group under conjugation.
Equation~\eqref{E:setup} and Theorem~\ref{T:probfrommult} allow us to express $\Prob((X_{u,n})^{\bS}\isom H)$ in terms of the multiplicities
$m(S,n,H,G)$.  The work of this section will be to find
 explicit formulas for these $m(S,n,H,G)$ (given in Corollaries~\ref{C:finalmab} and \ref{C:finalmnon}).

\begin{theorem}\label{T:calc}
Let $S$ be a finite set of finite groups
 and $H$ a finite level $S$ group. Let $n\geq 1$ and $u> -n$ be integers.
Then
\begin{eqnarray}\label{E:fixedn}
&&\Prob((X_{u,n})^{\bS}\isom H)\\
&=&\frac{|\Sur(\hat{F}_n,H)|}{|\Aut(H)||H|^{n+u}}\prod_{\substack{G \in \mathcal{A}_H}} 
\prod_{k=0}^{m(S,n,H,G)-1} (1-\frac{h_H(G)^k}{ |G|^{{n+u}}})
 \prod_{\substack{ G\in \mathcal{N}}}  (1-|G|^{-{n-u}})^{m(S,n,H,G)},\nonumber
\end{eqnarray}
and we have
\begin{eqnarray}\label{E:lim}
&&
\lim_{n\ra\infty} \Prob((X_{u,n})^{\bS}\isom H)\\
&=&\frac{1}{|\Aut(H)||H|^{u}}\prod_{\substack{G \in \mathcal{A}_H}} 
\prod_{i=1}^{\infty} (1-\lambda(S,H,G)
\frac{h_H(G)^{-i}}{ |G|^{{u}}})
 \prod_{\substack{ G\in \mathcal{N}}}  e^{-|G|^{-u}\lambda(S,H,G)}.\nonumber
\end{eqnarray}

Further, if $G\in \mathcal{A}_H\cup \mathcal{N}$ is isomorphic as a group to $\Gamma^j$ for some simple group $\Gamma$, and either 1) $\Gamma$ is not in the closure of $S$ under taking subgroups and quotients, or 2) $j>|H|$, then
$m(S,n,H,G)=\lambda(S,H,G)=0$.

\end{theorem}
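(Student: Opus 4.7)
The plan is to stitch together Equation~\eqref{E:setup}, Lemma~\ref{L:irred}, Theorem~\ref{T:probfrommult}, and Theorem~\ref{T:countRsub} to obtain \eqref{E:fixedn}, then compute the asymptotics of $m(S,n,H,G)$ via a two-sided count of certain normal subgroups, pass to the limit termwise to get \eqref{E:lim}, and finally dispatch the ``Further'' vanishing using Lemma~\ref{L:HfactR} and Corollary~\ref{C:nonewsimple}.

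For \eqref{E:fixedn}, I would begin from \eqref{E:setup} and observe that when $x_1,\dots,x_{n+u}$ are uniform in $F$, the event $[x_1,\dots,x_{n+u}]_F=R$ forces each $x_i\in R$ (else the normal closure leaves $R$), contributing $|H|^{-(n+u)}$; conditional on that, the $x_i$ are uniform in $R$. Lemma~\ref{L:irred} writes $R$ as a product of irreducible $F$-groups whose abelian factors have $F$-action factoring through $H$, so $h_F=h_H$ on such factors. Applying Theorem~\ref{T:probfrommult} and collecting non-abelian $F$-irreducible factors by their underlying group (the formula depends only on $|G|$), the combined exponent becomes $m(S,n,H,G)=\sum_i m_i$, producing \eqref{E:fixedn}.

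To control $m(S,n,H,G)$ as $n\to\infty$, I would count the set $\mathcal{K}(G)$ of normal subgroups $K\lhd F$ with $K\sub R$ and $R/K$ isomorphic to $G$ as an $H$-group (when $G\in\mathcal{A}_H$) or as a group with $R/K$ an irreducible $F/K$-group (when $G\in\mathcal{N}$), in two ways. From Theorem~\ref{T:countRsub}, using that surjections with fixed kernel differ by post-composition with $\Aut_F(G)$:
\[
|\mathcal{K}(G)|=\frac{h_H(G)^{m(S,n,H,G)}-1}{h_H(G)-1}\ (G\in\mathcal{A}_H),\qquad |\mathcal{K}(G)|=m(S,n,H,G)\ (G\in\mathcal{N}),
\]
where in the non-abelian case the count collects all $F$-group structures on $G$. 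On the other side, each $K\in\mathcal{K}(G)$ determines an $H$-extension $(F/K,F/K\to H)$, level $S$, whose kernel (with the required irreducibility) is isomorphic to $G$; conversely, for a fixed isomorphism class $(E,\pi)$, the number of $K$ with $F/K\simeq E$ as $H$-extensions equals $|\Sur_H(\hat{F}_n,E)|/|\Aut_H(E,\pi)|$, using that level-$S$ quotients of $F$ agree with level-$S$ quotients of $\hat{F}_n$ and the standard orbit count. Summing over $(E,\pi)$ gives $|\mathcal{K}(G)|=\sum_{(E,\pi)}|\Sur_H(\hat{F}_n,E)|/|\Aut_H(E,\pi)|$.

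For the limit, $|\Sur(\hat{F}_n,H)|/|H|^n\to 1$ (standard for free profinite groups), and fibering $\Sur(\hat{F}_n,E)\to\Sur(\hat{F}_n,H)$ by composition with $\pi$ gives fibres all of size $|\Sur_H(\hat{F}_n,E)|$ (by the Lubotzky--Segal uniqueness cited at \eqref{E:fundSES}), hence $|\Sur_H(\hat{F}_n,E)|\sim(|E|/|H|)^n=|G|^n$. Combining with the two-sided count yields $h_H(G)^{m(S,n,H,G)}/|G|^n\to\lambda(S,H,G)$ in the abelian case and $m(S,n,H,G)/|G|^n\to\lambda(S,H,G)$ in the non-abelian case. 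Substituting into \eqref{E:fixedn} and reindexing the abelian product by $i=m(S,n,H,G)-k$ makes each factor $1-h_H(G)^{m-i}/|G|^{n+u}$ converge to $1-\lambda(S,H,G)\,h_H(G)^{-i}/|G|^u$; the non-abelian factor $(1-|G|^{-n-u})^{m(S,n,H,G)}$ tends to $e^{-\lambda(S,H,G)|G|^{-u}}$ via the standard $(1-x/N)^N\to e^{-x}$. The prefactor converges to $1/(|\Aut(H)||H|^u)$, and Section~\ref{S:factors} together with the ``Further'' claim below ensures that only finitely many $G$ contribute uniformly in $n$, so the products are in fact finite and termwise convergence delivers \eqref{E:lim}.

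For the ``Further'' claim, suppose $G\simeq\Gamma^j$ with $\Gamma$ simple. If $\Gamma$ is not in the closure of $S$ under subgroups and quotients, then by Corollary~\ref{C:nonewsimple}, $\Gamma\notin\bS$; any $(E,\pi)$ appearing in $\lambda(S,H,G)$ has $E\in\bS$, and $\Gamma$, as a quotient of $\ker\pi\cong G\subset E$, would lie in $\bS$---contradiction---so $\lambda(S,H,G)=0$, and the same argument applied to irreducible $F$-factors of $R\sub F\in\bS$ forces $m(S,n,H,G)=0$. If instead $j>|H|$, Lemma~\ref{L:HfactR} rules out $\Gamma^j$ as an irreducible $F$-group or as the irreducible kernel in any $H$-extension of $H$ by $G$, so both quantities vanish. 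The main obstacle lies in the two-sided count: the asymmetry between the abelian and non-abelian definitions of $m$ and $\lambda$ forces separate bookkeeping, and one must thread the Lubotzky--Segal uniqueness through the quotient $F$ of $\hat{F}_n$; once the correct objects are matched, the asymptotics reduce to the two standard limits above.
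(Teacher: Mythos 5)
Your proposal is correct and follows the paper's architecture: Equation~\eqref{E:setup} plus Lemma~\ref{L:irred} plus Theorem~\ref{T:probfrommult} for \eqref{E:fixedn}, a two-sided count of normal subgroups $K\lhd F$ inside $R$ against isomorphism classes of $H$-extensions for the multiplicities (this is exactly what Propositions~\ref{P:countNother} and \ref{P:mulnonab} do), and Lemma~\ref{L:HfactR} with Corollary~\ref{C:nonewsimple} for the vanishing statement. The one place you genuinely diverge is in evaluating $|\Sur_H(\rho,\pi)|$: the paper proves an exact formula by M\"obius inversion on the poset of sub-$H$-extensions (Proposition~\ref{P:countfromF}), whereas you fiber $\Sur(\hat{F}_n,E)\to\Sur(\hat{F}_n,H)$ and use the homogeneity of the fibers (Gasch\"utz/Lubotzky) to get $|\Sur_H(\rho,\pi)|\sim(|E|/|H|)^n$. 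Your route is shorter and entirely sufficient for \eqref{E:lim}; the paper's exact formula buys the uniform bounds $|\Sur(\rho,\pi)|\le|G|^n$ and monotonicity statements that are reused later (Lemmas~\ref{L:P-Const} and \ref{L:mincinS}), which your asymptotic version would not supply. Two small cautions: your justification that surjections $(\hat{F}_n)^{\bS}\to E$ factor through $F$ (``level-$S$ quotients of $F$ agree with level-$S$ quotients of $\hat{F}_n$'') is not right as stated --- the correct reason is that when $\ker\pi$ is irreducible the kernel of such a surjection is a maximal proper $(\hat{F}_n)^{\bS}$-normal subgroup of $N$ and hence contains $M$; and in passing to the limit in the abelian product the number of factors $m(S,n,H,G)$ grows with $n$, so plain termwise convergence is not enough and you need the uniform statement of Lemma~\ref{L:seq} (the tail estimate), as the paper invokes.
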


\begin{remark}\label{R:finprod}
%The non-trivial terms in the product over $\mathcal{A}_H$ and $\mathcal{N}$ are those with positive $m(S,n,H,G)$. 
The products over $\mathcal{A}_H$ and  
$\mathcal{N}$ appearing in Theorem~\ref{T:calc} are actually finite products (except for trivial terms), because of the last statement in the theorem. %and Lemma~\ref{L:nonewsimple}, which implies there are only finitely many simple groups in $\bar{S}$.
%By Lemma~\ref{L:R-cfp}, they all come from the first components of chief factor pairs in $\CF(\bS)$. 
%Therefore, this product is actually a finite product by Lemma~\ref{L:CfpFin}.
\end{remark}

\begin{remark}
We will show in Section~\ref{S:arbitraryS} that statement of Theorem~\ref{T:calc} also works for an arbitrary set $S$ of finite groups.
\end{remark}

First, we need to define the M\"obius function on a poset of $H$-extensions.  Given a finite group $H$, there is a poset $\mathcal{E}_H$ of $H$-extensions (\emph{not} isomorphism classes of $H$-extensions) where $(E,\pi) \leq (E',\pi')$ if $(E,\pi)$ is a sub-$H$-extension of $(E',\pi')$.  (This relation is defined for literal sub-$H$-extensions and not  $H$-extensions just isomorphic to a subextension.)
We let $\nu(D,E)$ be the M\"obius function of this poset (we drop the maps to $H$ in the notation but they are implicit) so that for two $H$-extensions $D$ and $E$ we have
\begin{align*}
\nu(E,E)&=1\\
\nu(D,E)&=-\sum_{\substack{D'\in \mathcal{E}_H \\D< D' \leq E }} \nu(D',E) \quad\quad \textrm{    if $D\ne E$}
\end{align*}
so that in particular
\begin{align*}
\nu(D,E)&=0 \textrm{   if $D$ is not a sub-$H$-extension of $E$}\\
\sum_{\substack{D'\in \mathcal{E}_H \\D\leq D' \leq E }} \nu(D',E)&= 
\begin{cases}
1 &\textrm{if $D=E$}\\
0 & \textrm{otherwise}.
\end{cases}
\end{align*}

Theorem~\ref{T:countRsub} relates our key multiplicities $m(S,n,H,G)$ to the number of $F$-surjections from $R$ to $G$.  An $F$-surjection $R\ra G$ has a kernel $K$, and we have a surjection from our $H$-extension $(\hat{F}_n)^{\bS}\ra H$ to the $H$-extension $F/K \ra H$.  The next proposition will count such surjections of $H$-extensions.

\begin{proposition}\label{P:countfromF}
Let $n\geq1$ be an integer,  $S$ a finite set of finite groups, and $H$ a finite level $S$ group.
Let  $(\hF_n)^{\bS}\stackrel{\rho}{\ra}H$ be an $H$-extension structure on $(\hF_n)^{\bS}$.
Let $E \stackrel{\pi}{\ra} H $ be a finite $H$ extension.  We have
$$
|\Sur(\rho,\pi)|=\begin{cases}
\sum_{\substack{D\in\mathcal{E}_H, D\leq E}} \nu(D,E) \left( \frac{|D|}{|H|} \right)^n &\textrm{   if $E$ is level $S$}
\\
0  &\textrm{   otherwise}.
\end{cases}
$$
\end{proposition}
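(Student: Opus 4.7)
The plan is to reduce to counting homomorphisms (not necessarily surjective) of $H$-extensions via Möbius inversion on the poset of sub-$H$-extensions of $E$, and then to compute those homomorphism counts directly from universal properties.

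First I would dispose of the case where $E$ is not level $S$. Since $(\hF_n)^{\bS}$ lies in $\bS$ and $\bS$ is closed under quotients, any surjection $(\hF_n)^{\bS}\twoheadrightarrow E$ of groups would force $E\in\bS$. Hence $|\Sur(\rho,\pi)|=0$, giving the second case.

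Now assume $E$ is level $S$. Every sub-$H$-extension $D\le E$ is in particular a subgroup of $E$, and $\bS$ is closed under subgroups, so each such $D$ is also level $S$. The key observation is that the image of any morphism of $H$-extensions $\phi:((\hF_n)^{\bS},\rho)\to(E,\pi)$ is automatically a sub-$H$-extension of $E$: the image $\phi((\hF_n)^{\bS})$ is a subgroup of $E$ and projects onto $H$ under $\pi$ because $\pi\circ\phi=\rho$ is itself surjective onto $H$. Partitioning morphisms by their image therefore yields
\[
|\Hom(\rho,\pi)|=\sum_{D\in\mathcal{E}_H,\,D\le E}|\Sur(\rho,\pi|_D)|,
\]
and the same identity holds with $E$ replaced by any $E'\le E$. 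Möbius inversion on the poset of sub-$H$-extensions of $E$ then gives
\[
|\Sur(\rho,\pi)|=\sum_{D\in\mathcal{E}_H,\,D\le E}\nu(D,E)\,|\Hom(\rho,\pi|_D)|.
\]

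The final step is to compute $|\Hom(\rho,\pi|_D)|$ for each level-$S$ sub-$H$-extension $D\le E$. Because $D$ is level $S$, the universal property of the pro-$\bS$ completion identifies continuous homomorphisms $(\hF_n)^{\bS}\to D$ with continuous homomorphisms $\hF_n\to D$, which by freeness are given by arbitrary $n$-tuples in $D^n$. The compatibility condition $\pi|_D\circ\phi=\rho$ requires that the image of the $i$-th free generator of $\hF_n$ lift $\rho(t_i)\in H$; since $\pi|_D:D\to H$ is a surjection of finite groups, there are exactly $|D|/|H|$ such lifts for each $i$, giving $|\Hom(\rho,\pi|_D)|=(|D|/|H|)^n$. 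Substituting yields the claimed formula.

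The main obstacle is not any single calculation but rather bookkeeping: one must verify that the partition by image really is a partition (which uses that $H$-extension morphisms have sub-$H$-extension image), and that the Möbius inversion is applied in the correct direction on the poset $\mathcal{E}_H$ with the $\nu$ defined in the paper. After these are pinned down, the counts of $\Hom$ follow at once from the universal properties of $\hF_n$ and of pro-$\bS$ completion.
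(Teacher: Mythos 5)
Your proposal is correct and follows essentially the same route as the paper: both arguments partition homomorphisms of $H$-extensions by their image (a sub-$H$-extension), compute $|\Hom(\rho,\pi|_D)|=(|D|/|H|)^n$ via the universal properties of $\hF_n$ and the pro-$\bS$ completion together with the coset-lifting count, and then apply M\"obius inversion on $\mathcal{E}_H$. The only difference is cosmetic (you invert first and then evaluate the $\Hom$ counts, while the paper evaluates first), so there is nothing to add.
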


\begin{proof}
If $(\hF_n)^{\bS} \ra E$ is a surjection, then $E$ is level $S$ and hence $|\Sur(\rho,\pi)|=0$ if $E$ is not level $S$. 
If $E$ is level $S$ and $(D,\psi)\leq (E,\pi)$, surjections $(\hF_n)^{\bS} \ra D$ exactly correspond to surjections $\hF_n \ra D$, i.e. choices of image for each generator $x_1,\dots,x_n$ of $\hat{F}_n$ such that their images generate $D$. 
For each generator $x_i$ of $\hF_n$, we have a fixed coset of $\ker(\pi)$ in $D$ it can land in to actually obtain a surjection compatible with the maps to $H$. 
The number of homomorphisms $\hF_n \ra D$ where the generators go to the appropriate cosets is $(|D|/|H|)^n$.  
Let $E'$ be a subgroup of $D$ that could be generated by some $y_1,\dots,y_n$ with each $y_i$ in the required cosets of $\ker(\pi)$.  Since $\rho$ is a surjection, it follows that $\pi(E')=H$.
So we have
$$
\left(\frac{|D|}{|H|}\right)^n=\sum_{\substack{(E',\phi)\in\mathcal{E}_H\\ (E',\phi)\leq (D,\psi)}} |\Sur_H(\rho,\phi)|.
$$
Using M\"obius inversion, we obtain the result. We can sum the above as follows.  Given a finite $H$-extension $(E,\pi)$ of level $S$, we have 
\begin{align*}
\sum_{(D,\psi) \leq (E,\pi)} \nu(D,E) \left(\frac{|D|}{|H|}\right)^n&=\sum_{(D,\psi) \leq (E,\pi)} \nu(D,E) \sum_{\substack{
%(D,\phi)\in\mathcal{E}_H\\
 (E',\phi)\leq (D,\psi)}}   |\Sur_H(\rho,\phi)|\\
&=\sum_{(E',\phi) \leq (E,\pi)}   |\Sur_H(\rho,\phi)| \sum_{ (E',\phi)\leq (D,\psi) \leq (E,\pi)} \nu(D,E) \\
&=  |\Sur_H(\rho,\pi)|,
\end{align*}
as desired.
\end{proof}

%\begin{remark}\label{R:universal}
%Note that in Proposition~\ref{P:countfromF}, the number of surjections does not depend on the choice of map $(\hF_n)^{\bS}{\ra}H$. In %particular, if we let $E$ to be $(\hat{F}_n)^{\bS}$, then Proposition~\ref{P:countfromF} implies that $|\Sur(\rho, \pi)|=|\Sur(\pi,\pi)|%>0$ and hence any two surjections $\pi, \rho: (\hat{F}_n)^{\bS} \to H$ are isomorphic as $H$-extensions.
%\end{remark}

Now we will build on Proposition~\ref{P:countfromF} to find $|\Sur_F(R,G)|$, after which we can then use Theorem~\ref{T:countRsub} to find the $m(S,n,H,G)$.  We will first do the case of abelian $G$, and then non-abelian $G$.

\begin{proposition}[Counting surjections from $R$ to abelian $G$]\label{P:countNother}
Let $H$, $F$, and $R$ be defined as at the beginning of this section.
Let $G$ be an abelian irreducible $F$-group.
Then 
\begin{align*}
|\Sur_F(R,G)|=|\Aut_F(G)|\sum_{\substack{\textrm{isom. classes of $H$-extensions $(E,\pi)$}\\ \textrm{$\ker \pi \simeq G$ as an $H$-group} \\
\textrm{$E$ is level $S$} }}
\frac{\sum_{\substack{D\in\mathcal{E}_H, D\leq E}} \nu(D,E) \left( \frac{|D|}{|H|} \right)^n}{|\Aut_H(E,\pi)|}.
\end{align*}
if the action of $F$ on $G$ factors through $F\ra H$ (i.e. elements of $R$ act trivially on $G$) and $|\Sur_F(R,G)|=0$ otherwise.
\end{proposition}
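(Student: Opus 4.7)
The plan is to set up a bijection between $F$-surjections $R\to G$ and triples consisting of an $H$-extension (up to isomorphism), a surjection of $H$-extensions from $F$, and an identification of the kernel with $G$; then to invoke Proposition~\ref{P:countfromF}.

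First, I would dispose of the vanishing case. If $\phi:R\to G$ is an $F$-surjection, then for any $r,r'\in R$, $F$-equivariance together with the fact that the $F$-action of $r\in R\subseteq F$ on $R$ is conjugation gives $r\cdot\phi(r')=\phi(rr'r^{-1})=\phi(r)\phi(r')\phi(r)^{-1}=\phi(r')$, where the last equality uses that $G$ is abelian. Surjectivity of $\phi$ then forces $R$ to act trivially on $G$, so the $F$-action on $G$ must factor through $H$; when it does not, $\Sur_F(R,G)$ is empty.

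For the main case, the assignment $\phi\mapsto K:=\ker\phi$ partitions $\Sur_F(R,G)$; since $G$ is an irreducible $F$-group, $K$ is a maximal proper $F$-normal subgroup of $R$, and in particular is normal in $F$. The quotient $F/K$ with its induced map to $H$ is an $H$-extension whose kernel is isomorphic to $G$ as an $H$-group, and is level $S$ because $F$ is. Grouping further by the isomorphism class of this $H$-extension, for a fixed representative $(E,\pi)$ the number of $K$ with $F/K\simeq (E,\pi)$ as $H$-extensions equals $|\Sur_H(F,E)|/|\Aut_H(E,\pi)|$, because $\Aut_H(E,\pi)$ acts freely on $\Sur_H(F,E)$ by post-composition (surjections are epimorphisms) and each orbit corresponds to a single such $K$; and for each such $K$ there are $|\Aut_H(G)|=|\Aut_F(G)|$ choices of the $H$-group isomorphism $R/K\to G$ recovering $\phi$.

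Finally, I would identify $|\Sur_H(F,E)|$ with $|\Sur_H((\hF_n)^{\bS},E)|$ and apply Proposition~\ref{P:countfromF}. The equality holds because $\ker\pi\simeq G$ is an irreducible $H$-group on which $R$ acts trivially, hence is also irreducible as an $E$-group; consequently the preimage in $(\hF_n)^{\bS}$ of the kernel of any $H$-surjection $(\hF_n)^{\bS}\to E$ is a maximal proper $(\hF_n)^{\bS}$-normal subgroup of $N$, and therefore contains $M$ by the definition of $M$. Substituting the M\"obius expression from Proposition~\ref{P:countfromF} then yields the claimed formula. The main delicate point is the orbit-stabilizer bookkeeping that produces $|\Aut_H(E,\pi)|$ in the denominator together with the factor $|\Aut_F(G)|$, combined with the verification that irreducibility of $\ker\pi$ as an $E$-group legitimately reduces the surjection count from $F$ to one on $(\hF_n)^{\bS}$.
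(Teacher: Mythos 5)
Your proposal is correct and follows essentially the same route as the paper's proof: the vanishing case via abelianness of $G$, the reduction of $|\Sur_F(R,G)|$ to a count of kernels weighted by $|\Aut_F(G)|$, the orbit--stabilizer step producing $|\Sur_H(F,E)|/|\Aut_H(E,\pi)|$, the observation that such surjections from $(\hF_n)^{\bS}$ factor through $F$ because the relevant kernels are maximal proper $(\hF_n)^{\bS}$-normal subgroups of $N$ containing $M$, and the final appeal to Proposition~\ref{P:countfromF}. The only difference is cosmetic (partitioning surjections by their kernels rather than counting $F$-subgroups first and multiplying by $|\Aut_F(G)|$ at the outset).
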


\begin{proof}
We have that $|\Sur_F(R,G)|$ is $|\Aut_F(G)|$ times
the number of $F$-subgroups $M$ of $R$ such
that $R/M$ under $F$-conjugation is isomorphic to $G$ as an $F$-group.
If $M$ is an $F$-subgroup of $R$ such that $R/M$ is abelian, then the action of $F$ via conjugation on $R/M$ factors through $H$ (because conjugation by elements from $R$ is trivial in $R/M$ as $R/M$ is abelian).
So suppose that the action of $F$ on $G$ factors through $H$.
We have
the number of $F$-subgroups $M$ of $R$ such
that $R/M$ is isomorphic to $G$ as an $F$-group is
\begin{eqnarray*}
&&\sum_{\substack{\textrm{isom. classes of} \\
\textrm{$H$-extensions $(E,\pi)$}}}
\#\left\{ F\textrm{-subgroups $M$ of $R$}\,  \Bigg|\,
\begin{aligned}
 & (F/M\ra H)\isom (E,\pi) \textrm{ as $H$-exts,}\\
 & \textrm{$R/M\isom G$ as $F$-groups } 
 \end{aligned} \right\}\\
&=&\sum_{\substack{\textrm{isom. classes of} \\
\textrm{$H$-extensions $(E,\pi)$}\\
\ker \pi\isom G \textrm{ as groups}
}}
\#\left\{ F\textrm{-subgroups $M$ of $R$}\,  \Bigg|\,
\begin{aligned}
 & (F/M\ra H)\isom (E,\pi) \textrm{ as $H$-exts,}\\
 & \textrm{$R/M\isom G$ as $F$-groups } 
 \end{aligned} \right\}.
\end{eqnarray*}
Given that $R/M$ is abelian (which is guaranteed by the group isomorphism $\ker \pi\isom G$ and the $H$-extension isomorphism $(F/M\ra H)\isom (E,\pi)$), since the action of $F$ on $R/M$ factors through $H$, we have that 
$R/M$ is isomorphic to $G$ as an $F$-group if and only if it is isomorphic to $G$ as an $H$-group.
Given $(F/M\ra H)\isom (E,\pi)$, this is the same as requiring $\ker \pi\isom G$ as an $H$-group.  
Thus the above sum is equal to 

\begin{eqnarray*}
  && \sum_{\substack{\text{isom. classes of $H$-extension }(E,\pi) \\ \ker \pi \simeq G \text{ as an $H$-group}}} \#\left\{ F\text{-subgroups $M$ of $R$}\mid (F/M \ra H) \simeq (E,\pi) \text{ as $H$-exts} \right\}\\
  &=& \sum_{\substack{\text{isom. classes of $H$-extension }(E,\pi) \\ \ker \pi \simeq G \text{ as an $H$-group}}} \frac{ \# \left\{ (M, \phi)\,\Bigg|\,
  \begin{aligned}
    & M \text{ an $F$-subgroup of }R\\
    & \phi: (F/M \ra H) \simeq (E, \pi)
  \end{aligned}
  \right\}}{|\Aut_H(E,\pi)|}.
\end{eqnarray*}
Note that the data $(M,\phi)$ above is exactly the same as the data of a surjection of $H$-extensions from $F\ra H$ to $E\ra H$.

Now let $(E,\pi)$ be an $H$-extension with $\ker \pi$ (via conjugation) an abelian irreducible $H$-group. Consider a surjection of $H$-extensions from $(\hat{F}_n)^{\bS}\ra H$ to $E\ra H$, in which the map  $(\hat{F}_n)^{\bS}\ra E$ has kernel $K$. Let $N$ denote the kernel of $(\hat{F}_n)^{\bS}\to H$. Then $N/K \simeq \ker \pi$ is an irreducible $(\hat{F}_n)^{\bS}$-group, and so $K$ is a maximal proper $(\hat{F}_n)^{\bS}$-subgroup of $N$.  So the map $(\hat{F}_n)^{\bS}\ra E$ factors through $F$.  On the other hand, any surjection of $H$-extensions from $F\ra H$ to $E\ra H$ clearly extends to a surjection of $H$-extensions from $(\hat{F}_n)^{\bS}\ra H$ to $E\ra H$.   Thus the above sum is equal to
  \begin{align*}
\sum_{\substack{\textrm{isom. classes of $H$-extensions $(E,\pi)$}\\ \textrm{$\ker \pi \simeq G$ as an $H$-group}}}
\frac{|\Sur_H((\hat{F}_n)^{\bS}\ra H,\pi)|}{|\Aut_H(E,\pi)|}.
\end{align*}
 The result now follows from applying Proposition~\ref{P:countfromF} above, after dividing out by the number of choices of isomorphism to $(E,\pi)$.
\end{proof}

We now can determine the multiplicities of the abelian irreducible $F$-groups in $R$ by combining Theorem~\ref{T:countRsub} and Proposition~\ref{P:countNother}.
\begin{corollary}[Multiplicities of abelian $G$ in $R$]\label{C:finalmab}
Let $H$, $F$ and $R$ be as above. Let $G$ be an abelian irreducible $H$-group.
%If the action of $F$ on $G$ does not factor through the map $F\ra H$, then $m(S,n,H,G)=0$.
%If the action of $F$ on $G$  does factor through the map $F\ra H$, 
Then
$$
\frac{h_H(G)^{m(S,n,H,G)}-1}{h_H(G)-1} =\sum_{\substack{\textrm{isom. classes of $H$-extensions $(E,\pi)$}\\ \textrm{$\ker \pi \simeq G$ as an $H$-group} \\
\textrm{$E$ is level $S$} }}
\frac{\sum_{\substack{D\in\mathcal{E}_H, D\leq E}} \nu(D,E) \left( \frac{|D|}{|H|} \right)^n}{|\Aut_H(E,\pi)|}.
$$
%In particular, we have that $m(S,n,H,G)$ does not depend on the choice of surjection $(\hF_n)^{\bS} \ra H$.
\begin{comment}
 and
\begin{align*}
\lim_{n\ra \infty} \frac{h_H(G)^{m'(S,n,H,G)}}{|G|^n}% &=\lim_{n\ra \infty} (h(G)-1) 
%\sum_{\substack{\textrm{isom. classes of $H$-extensions $(E,\pi)$}\\ \textrm{$\ker \pi$ isom. $G$ as an $H$-%group} \\
%\textrm{$E$ quasi-level $S$}  }}
%\frac{\sum_{\substack{D \textrm{ subgroup of } E\\\pi(S)=H}} \nu(S,E) \left( \frac{|D|}{|E|} \right)^n}{\#
%\Aut_H(E,\pi)}\\
&= (h_H(G)-1) 
\sum_{\substack{\textrm{isom. classes of $H$-extensions $(E,\pi)$}\\ \textrm{$\ker \pi \simeq G$ as an $H$-group} \\
\textrm{$E$ level $S$}  }}
\frac{1}{|\Aut_H(E,\pi)|}.
\end{align*}
\end{comment}
\end{corollary}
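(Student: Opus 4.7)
The plan is to compute $|\Sur_F(R,G)|$ in two different ways and equate the results. First I will apply Theorem~\ref{T:countRsub} to the decomposition of $R$ as an $F$-group. Since $G$ is an abelian irreducible $H$-group, its $F$-structure (via $F\ra H$) makes it an abelian irreducible $F$-group; moreover $h_F(G)=h_H(G)$ because the $F$-action on $G$ factors through $H$, so $F$-morphisms and $H$-morphisms from $G$ to $G$ coincide. I also need to check that the multiplicity of $G$ in $R$ as an $H$-group agrees with its multiplicity as an $F$-group, so that the integer appearing in Theorem~\ref{T:countRsub} is exactly $m(S,n,H,G)$. This is true because, in the decomposition of $R$ into irreducible $F$-groups, any abelian factor $G'$ has all other factors centralizing it (distinct factors of a direct product commute) and is itself abelian, so $R$ acts trivially by conjugation on $G'$; hence the $F$-action on every abelian irreducible $F$-factor of $R$ factors through $H$, and $F$- and $H$-irreducible structures agree. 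Theorem~\ref{T:countRsub} therefore yields $|\Sur_F(R,G)|=h_H(G)^{m(S,n,H,G)}-1$.

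Second, I will apply Proposition~\ref{P:countNother}: since the $F$-action on $G$ factors through $H$, the proposition is in its nonzero case, and gives
\[
|\Sur_F(R,G)|=|\Aut_F(G)|\cdot \Sigma,
\]
where $\Sigma$ denotes the sum on the right-hand side of the corollary. Equating the two expressions gives $h_H(G)^{m(S,n,H,G)}-1=|\Aut_F(G)|\cdot \Sigma$, and it remains to identify $|\Aut_F(G)|$ with $h_H(G)-1$.

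The key step is the identification $|\Aut_F(G)|=h_H(G)-1$. Because $G$ is abelian irreducible, as a group it is an elementary abelian $p$-group for some prime $p$, and the $H$-action makes $G$ into a simple $\F_p[H]$-module. By Schur's lemma the endomorphism ring $\End_H(G)=\Hom_H(G,G)$ is a division ring, and being finite it is a finite field, so $\Aut_H(G)=\End_H(G)^{\times}$ has order $h_H(G)-1$. Since the $F$-action on $G$ factors through $H$, $\Aut_F(G)=\Aut_H(G)$, which gives the desired equality. Dividing both sides of the equation from the previous paragraph by $h_H(G)-1$ yields the claimed identity. The only nontrivial point in this argument is the Schur-lemma step producing the factor $h_H(G)-1$; the rest is bookkeeping matching abelian irreducible $F$- and $H$-group structures on factors of $R$.
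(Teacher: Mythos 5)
Your proof is correct and is exactly the combination of Theorem~\ref{T:countRsub} and Proposition~\ref{P:countNother} that the paper intends, with the bookkeeping (the $F$- versus $H$-group structures on abelian factors, and $h_F(G)=h_H(G)$) filled in. The one substantive step, $|\Aut_F(G)|=h_H(G)-1$, which you obtain via Schur's lemma, is also immediate from Lemma~\ref{L:preSchur}: every nontrivial $F$-endomorphism of the abelian irreducible $F$-group $G$ has normal image and is therefore an automorphism, so $h_F(G)=|\Aut_F(G)|+1$.
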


Next, we will apply a similar plan to obtain the multiplicities of the non-abelian $G$, but there is an important difference from the abelian case.
When $\ker ( E \stackrel{\pi}{\ra} H)$ is non-abelian, a surjection of $H$-extensions $F\ra E$ still gives an $F$-group structure on $\ker \pi$ by conjugation in $E$, but, unlike in the case when $\ker\pi$ is abelian, that $F$-group structure is not necessarily determined by the isomorphism type of the $H$-extension $(E,\pi).$
So in this case it is most convenient to add together, for each possible \emph{underlying group} $G$ of a non-abelian irreducible $F$-group, all surjections $F\ra E$ over all $G$ extensions $E$ of $H$.

\begin{proposition}[Counting surjections from $R$ to non-abelian $G$]\label{P:mulnonab}
Let $H$, $F$ and $R$ be as above.
%Let $H$ be a quasi-level $S$ group, and $F_{g,S}\ra H$ a surjection.  The notation for $R$ and $N$ is above.
Let $G$ be a finite non-abelian group. 
% It may have have several non-isomorphic structures as an irreducible $F$ group or not at all.  
Let $G_i$ be the pairwise non-isomorphic irreducible $F$-group structures on $G$ for $1\leq i \leq k$ ($k$ may be $0$).
Then
\begin{align*}
\sum_{i=1}^k
 \frac{|\Sur_F(R,G_i)|}{|\Aut_F(G_i)|}=
&\sum_{\substack{\textrm{isom. classes of $H$-extensions $(E,\pi)$}\\ \textrm{$\ker \pi \simeq G$ }
\\ \textrm{$\ker \pi$ irred. $E$-group} \\
\textrm{$E$ is level $S$} }} 
\frac{\sum_{\substack{D\in\mathcal{E}_H, D\leq E}} \nu(D,E) \left( \frac{|D|}{|H|} \right)^n}{|\Aut_H(E,\pi)|}.
\end{align*}
\end{proposition}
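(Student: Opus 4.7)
The proof should proceed by essentially the same strategy as Proposition~\ref{P:countNother}, with the key modification being the treatment of the $F$-group structure on quotients, which in the non-abelian case is no longer determined by the $H$-extension isomorphism class alone. Here is the plan.

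First, observe that for each $i$, the number $|\Sur_F(R,G_i)|$ equals $|\Aut_F(G_i)|$ times the number of $F$-subgroups $M$ of $R$ with $R/M\isom G_i$ as $F$-groups. Since the $G_i$ are pairwise non-isomorphic as $F$-groups, summing over $i$ yields
\[
\sum_{i=1}^k \frac{|\Sur_F(R,G_i)|}{|\Aut_F(G_i)|}
\;=\;
\#\bigl\{F\text{-subgroups }M\sub R \,\big|\, R/M \text{ is a group iso.\ to }G \text{ and an irreducible }F\text{-group}\bigr\},
\]
where we have used the fact that every irreducible $F$-group structure on $G$ is isomorphic to exactly one $G_i$.

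Next I would stratify the $M$'s on the right by the isomorphism class of the $H$-extension $(F/M\ra H)$. Given such $M$, let $E=F/M$ with its natural map $\pi$ to $H$; then $R/M=\ker\pi$, and the $F$-group structure on $R/M$ via conjugation in $F$ coincides with the $E$-group structure on $\ker\pi$ via conjugation in $E$. Thus $R/M$ is an irreducible $F$-group exactly when $\ker\pi$ is an irreducible $E$-group. Moreover, $E$ is automatically level $S$ as a quotient of $F$. Accounting for the $|\Aut_H(E,\pi)|$ automorphisms gives
\[
\sum_{\substack{\text{isom. classes of $H$-extensions }(E,\pi)\\ \ker\pi \isom G\text{ as groups}\\ \ker\pi \text{ irred. }E\text{-group}\\ E\text{ is level }S}} \frac{\#\bigl\{(M,\phi)\,\big|\,M\sub R\text{ an }F\text{-subgroup},\ \phi:(F/M\ra H)\isom (E,\pi)\bigr\}}{|\Aut_H(E,\pi)|},
\]
and each datum $(M,\phi)$ is precisely a surjection of $H$-extensions from $(F\ra H)$ to $(E,\pi)$.

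The final step is to identify these surjections with surjections from $((\hF_n)^{\bS}\ra H)$ to $(E,\pi)$. Let $\rho:(\hF_n)^{\bS}\ra H$ and $N=\ker\rho$. Any surjection of $H$-extensions $(\hF_n)^{\bS}\ra E$ with kernel $K\sub N$ gives an isomorphism $N/K\isom \ker\pi$ of $(\hF_n)^{\bS}$-groups. Because $\ker\pi$ is irreducible as an $E$-group and the $E$-structure pulls back to the $(\hF_n)^{\bS}$-structure, $N/K$ is irreducible as an $(\hF_n)^{\bS}$-group; hence $K$ is a maximal proper $(\hF_n)^{\bS}$-subgroup of $N$, and so $K\supseteq M$ (where $M$ is the intersection of all such maximals defining $F=(\hF_n)^{\bS}/M$). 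Thus every such surjection factors through $F$, and conversely every surjection $F\ra E$ of $H$-extensions lifts. Applying Proposition~\ref{P:countfromF} to evaluate $|\Sur_H(\rho,\pi)|$ then yields the claimed formula.

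The main subtlety -- and the only step that really differs from the abelian case -- is the observation in the final paragraph that irreducibility of $\ker\pi$ as an $E$-group (rather than irreducibility as an $H$-group, which is meaningless here since $H$ need not act) is exactly the condition needed to guarantee that surjections from $(\hF_n)^{\bS}$ factor through $F$. Once this is pinned down, the rest is bookkeeping parallel to Proposition~\ref{P:countNother}.
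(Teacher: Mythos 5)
Your proposal is correct and follows essentially the same route as the paper's proof: reduce to counting $F$-subgroups $M$ of $R$ with $R/M$ an irreducible $F$-group structure on $G$, stratify by the isomorphism class of the $H$-extension $F/M\ra H$, identify the pairs $(M,\phi)$ with surjections of $H$-extensions, lift to $(\hF_n)^{\bS}$ via the maximality of the kernel, and apply Proposition~\ref{P:countfromF}. The only cosmetic difference is that you collapse the sum over the $G_i$ at the outset, whereas the paper carries the sum over $i$ along and interchanges it with the sum over extension classes; the content is identical.
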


\begin{proof}
We note that ${|\Sur_F(R,G_i)|}/{|\Aut_F(G_i)|}$ is the number of $F$-subgroups of $R$ whose corresponding quotient is isomorphic to $G_i$ as an $F$-group.
We have
\begin{eqnarray*}
&&\sum_{i=1}^k
 \frac{|\Sur_F(R,G_i)|}{|\Aut_F(G_i)|}\\
&=&\sum_{i=1}^{k}
\sum_{\substack{\textrm{isom. classes of}\\ \textrm{ $H$-extensions $(E,\pi)$}
 }}
\#\left\{\textrm{$F$-subgroups $M$ of $R$} \,\Bigg|\,
\begin{aligned}
&(F/M\ra H)\isom (E,\pi) \text{ as $H$-exts}\\
& R/M\isom G_i  \text{ as $F$-groups }
\end{aligned}\right\}\\
&=&\sum_{i=1}^{k}
\sum_{\substack{\textrm{isom. classes of}\\\textrm{ $H$-extensions $(E,\pi)$}\\\textrm{$\ker \pi \simeq G$ }
 }}
\#\left\{\textrm{$F$-subgroups $M$ of $R$} \,\Bigg|\,
\begin{aligned}
&(F/M\ra H)\isom (E,\pi) \text{ as $H$-exts}\\
& R/M\isom G_i  \text{ as $F$-groups }
\end{aligned}\right\}\\
%%%
&=&
\sum_{\substack{\textrm{isom. classes of}\\\textrm{ $H$-extensions $(E,\pi)$}\\\textrm{$\ker \pi \simeq G$ }
 }} \sum_{i=1}^{k}
\#\left\{\textrm{$F$-subgroups $M$ of $R$} \,\Bigg|\,
\begin{aligned}
&(F/M\ra H)\isom (E,\pi) \text{ as $H$-exts}\\
& R/M\isom G_i  \text{ as $F$-groups }
\end{aligned}\right\} \\
%%%
&=&
\sum_{\substack{\textrm{isom. classes of}\\\textrm{ $H$-extensions $(E,\pi)$}\\\textrm{$\ker \pi \simeq G$ }
 \\ \textrm{$\ker \pi$ irred. $E$-group} 
 }} \sum_{i=1}^{k}
\#\left\{\textrm{$F$-subgroups $M$ of $R$} \,\Bigg|\,
\begin{aligned}
&(F/M\ra H)\isom (E,\pi) \text{ as $H$-exts}\\
& R/M\isom G_i  \text{ as $F$-groups }
\end{aligned}\right\} \\
%%%
%%%
&=&\sum_{\substack{\textrm{isom. classes of $H$-extensions $(E,\pi)$}\\ \textrm{$\ker \pi \simeq G$ }
\\ \textrm{$\ker \pi$ irred. $E$-group} 
 }}
\#\{\textrm{$F$-subgroups $M$ of $R \,|\,(F/M\ra H)\isom (E,\pi)$ as $H$-exts  }\}\\
%%%
%%%
&=&\sum_{\substack{\textrm{isom. classes of $H$-extensions $(E,\pi)$}\\ \textrm{$\ker \pi \simeq G$ }
\\ \textrm{$\ker \pi$ irred. $E$-group} 
 }}\frac{
\#\left\{(M,\phi)\,\Bigg|\,
\begin{aligned}
&\textrm{$M$ an $F$-subgroup of $R$} \\
&\textrm{$\phi:(F/M\ra H)\isom (E,\pi)$ as $H$-exts}
\end{aligned}
\right\}}{|\Aut_H(E,\pi)|}.
\end{eqnarray*}
The second equality follows because $(F/M\ra H) \isom (E,\pi)$ and $R/M\isom G_i$ imply that $\ker \pi \isom G$.
The fourth equality follows because $(F/M\ra H) \isom (E,\pi)$ and $R/M$ being an irreducible $F$-group imply that $\ker \pi$ is an irreducible $E$-group.  The fifth equality follows because $\ker \pi\isom G$ and $\ker\pi$ being an irreducible $E$-group and $(F/M\ra H) \isom (E,\pi)$ imply that $R/M$ is isomorphic to some $G_i$ as an $F$-group.

 Then we obtain the desired result by applying the same argument as at the end of Proposition~\ref{P:countNother}.
%Note that the data $(M,\phi)$ in the final equation above is exactly the same as the data of a surjection of $H$-extensions from $F\ra H$ to $E\ra H$.
%Now let $(E,\pi)$ be an $H$-extension with $\ker \pi$ (via conjugation) an irreducible $E$-group. Consider a surjection of $H$-extensions from $(\hat{F}_n)^{\bS}\ra H$ to $E\ra H$, in which the map  $(\hat{F}_n)^{\bS}\ra E$ has kernel $K$. Again, we let $N$ denote the kernel of $(\hat{F}_n)^{\bS}\ra H$.  Then $N/K$ is an irreducible $(\hat{F}_n)^{\bS}$-group, and so $K$ is a maximal proper $(\hat{F}_n)^{\bS}$-subgroup of $N$.  So the map $(\hat{F}_n)^{\bS}\ra E$ factors through $F$.  On the other hand, any surjection of $H$-extensions from $F\ra H$ to $E\ra H$ clearly extends to a surjection of $H$-extensions from $(\hat{F}_n)^{\bS}\ra H$ to $E\ra H$. 
%
%Thus, the above sum is equal to
%$$
%\sum_{\substack{\textrm{isom. classes of $H$-extensions $(E,\pi)$}\\ \textrm{$\ker \pi \simeq G$ }
%\\ \textrm{$\ker \pi$ irred. $E$-group} 
% }}
%\frac{|\Sur((\hat{F}_n)^{\bS}\ra H,\pi)|
%}{|\Aut_H(E,\pi)|}.
%$$
%Then by applying Proposition~\ref{P:countfromF}  we obtain the result.
\end{proof}

We now can determine the multiplicities of the non-abelian irreducible $F$-groups in $R$ by combining Theorem~\ref{T:countRsub} and Proposition~\ref{P:mulnonab}.

\begin{corollary}[Multiplicities of non-abelian $G$ in $R$]\label{C:finalmnon}
Let $H$, $F$ and $R$ be as above.
Let $G$ be an non-abelian finite group. 
Then 
$$m(S,n,H,G)=\sum_{\substack{\textrm{isom. classes of $H$-extensions $(E,\pi)$}\\ \textrm{$\ker \pi \simeq G$ }
\\ \textrm{$\ker \pi$ irred. $E$-group} \\
\textrm{$E$ level $S$} }} 
\frac{\sum_{\substack{D\in\mathcal{E}_H, D\leq E}} \nu(D,E) \left( \frac{|D|}{|H|} \right)^n}{|\Aut_H(E,\pi)|}.
$$
%In particular $m(S,n,H,G)$ does not depend on the choice of surjection  $(\hF_n)^{\bS} \ra H$.
\end{corollary}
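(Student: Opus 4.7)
The plan is to assemble the result from two ingredients already established: Theorem~\ref{T:countRsub}, which counts $F$-surjections from $R$ onto a single non-abelian irreducible $F$-group in terms of its multiplicity, and Proposition~\ref{P:mulnonab}, which evaluates the sum of these surjection counts (over all irreducible $F$-group structures on the underlying group $G$) via a sum over isomorphism classes of $H$-extensions. Both results are essentially ready to be applied, so there is no real obstacle—the work is just bookkeeping.

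First, I would unpack the definition. Let $G_1, \dots, G_k$ be the pairwise non-isomorphic irreducible $F$-group structures on $G$ (where $k$ may be $0$, in which case both sides vanish trivially, since then no irreducible $F$-group factor of $R$ has underlying group $G$). By the definition of $m(S,n,H,G)$ given at the start of Section~\ref{S:basics}, we have
\[
m(S,n,H,G) \;=\; \sum_{i=1}^{k} (\text{multiplicity of } G_i \text{ as an irreducible $F$-group factor of } R).
\]
Here $R$ is expressed as a product of irreducible $F$-groups by Lemma~\ref{L:irred}, so these multiplicities are well-defined.

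Second, by the non-abelian case of Theorem~\ref{T:countRsub}, if $m_i$ denotes the multiplicity of $G_i$ in $R$, then
\[
|\Sur_F(R,G_i)| \;=\; m_i \, |\Aut_F(G_i)|,
\]
so $m_i = |\Sur_F(R,G_i)|/|\Aut_F(G_i)|$. Summing over $i$ gives
\[
m(S,n,H,G) \;=\; \sum_{i=1}^{k} \frac{|\Sur_F(R,G_i)|}{|\Aut_F(G_i)|}.
\]

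Finally, I would invoke Proposition~\ref{P:mulnonab}, whose left-hand side is precisely this sum, to replace it with the claimed sum over isomorphism classes of $H$-extensions $(E,\pi)$ with $\ker\pi \isom G$ (as groups), $\ker\pi$ an irreducible $E$-group, and $E$ of level $S$:
\[
\sum_{i=1}^{k} \frac{|\Sur_F(R,G_i)|}{|\Aut_F(G_i)|}
\;=\;
\sum_{\substack{(E,\pi) \text{ as above}}}
\frac{\sum_{D \in \mathcal{E}_H,\, D \leq E} \nu(D,E)\bigl(|D|/|H|\bigr)^n}{|\Aut_H(E,\pi)|}.
\]
Chaining the two displayed equalities yields the formula for $m(S,n,H,G)$. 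The only conceptual point worth noting is that summing over all irreducible $F$-structures $G_i$ on the single underlying group $G$ on the left matches the sum over $H$-extensions $(E,\pi)$ with $\ker\pi$ an irreducible $E$-group on the right—this is exactly the content of Proposition~\ref{P:mulnonab}, reflecting that a single $H$-extension $E$ may give rise, via different surjections $F \twoheadrightarrow E$, to different $F$-group structures on $\ker\pi$. Thus no additional argument is needed beyond quoting the two preceding results.
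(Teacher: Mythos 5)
Your proposal is correct and is exactly the paper's argument: the corollary is obtained by combining the non-abelian case of Theorem~\ref{T:countRsub} (giving $m_i=|\Sur_F(R,G_i)|/|\Aut_F(G_i)|$ for each irreducible $F$-structure $G_i$ on $G$) with Proposition~\ref{P:mulnonab}, which evaluates the sum of these quotients over $i$. The bookkeeping with the definition of $m(S,n,H,G)$ as the sum of the multiplicities of the $G_i$ is also as in the paper.
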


Finally, before we prove Theorem~\ref{T:calc}, we need the following lemma, whose proof is straightforward.

\begin{lemma}\label{L:seq}
Suppose $x_1,x_2,\dots$ is a sequence of real numbers with limit $x$, and $y_1,\dots$ is a sequence of real numbers with limit $\infty$.  Let $a>1$ be a real number.  
Then $f(x)=\prod_{i=1}^\infty (1-xa^{-i})$ is continuous in $x$ and
$$
\lim_{n\ra \infty} \prod_{i=1}^{y_n} (1-x_na^{-i})=\prod_{i=1}^{\infty} (1-xa^{-i}).
$$
\end{lemma}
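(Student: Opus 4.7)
The plan is to reduce everything to the standard fact that absolutely convergent infinite products are continuous, plus a uniform tail estimate. Throughout I assume $y_n$ is interpreted as a positive integer (or replaced by $\lfloor y_n \rfloor$), since the product index is otherwise meaningless, and that is how the lemma will be used.

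First I would show that for any bounded set $K\subset\R$, the partial products $P_N(x)=\prod_{i=1}^N(1-xa^{-i})$ converge uniformly on $K$ as $N\to\infty$. The key inequality is the elementary bound
\begin{equation*}
\left|\prod_{i=1}^{k}(1+z_i)-1\right|\le \prod_{i=1}^{k}(1+|z_i|)-1 \le \exp\!\Bigl(\sum_{i=1}^{k}|z_i|\Bigr)-1,
\end{equation*}
applied to $z_i=-xa^{-i}$. For $|x|\le M$ and any $N<N'$, this gives
\begin{equation*}
\left|\prod_{i=N+1}^{N'}(1-xa^{-i})-1\right|\le \exp\!\Bigl(M\sum_{i=N+1}^{\infty}a^{-i}\Bigr)-1 = \exp\!\Bigl(\tfrac{Ma^{-N}}{a-1}\Bigr)-1,
\end{equation*}
which tends to $0$ as $N\to\infty$ independently of $x\in[-M,M]$ and of $N'>N$. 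Hence the partial products form a uniform Cauchy sequence, so they converge uniformly on $[-M,M]$ to $f(x)$. Since each $P_N$ is a polynomial and hence continuous, $f$ is continuous on $\R$.

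Next I would prove the displayed limit. Fix $\varepsilon>0$ and let $M$ be a bound on $\{|x_n|\}\cup\{|x|\}$ (which exists since $x_n\to x$). Using the tail estimate above, choose $N$ large enough that
\begin{equation*}
\exp\!\Bigl(\tfrac{Ma^{-N}}{a-1}\Bigr)-1 < \varepsilon.
\end{equation*}
For this fixed $N$, by continuity of the polynomial $P_N$ and $x_n\to x$, there exists $n_0$ with $|P_N(x_n)-P_N(x)|<\varepsilon$ and $y_n>N$ for all $n\ge n_0$. For such $n$, write
\begin{equation*}
\prod_{i=1}^{y_n}(1-x_na^{-i}) = P_N(x_n)\cdot \prod_{i=N+1}^{y_n}(1-x_na^{-i}),
\end{equation*}
where the tail factor is within $\varepsilon$ of $1$ by the choice of $N$. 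Combining these two approximations with the uniform bound $|P_N(x_n)|\le (1+M)^N$ shows that $\prod_{i=1}^{y_n}(1-x_na^{-i})$ is within $O(\varepsilon)$ of $f(x)$, where the implicit constant depends only on $M$ and $N$. Letting $\varepsilon\to 0$ yields the claimed limit.

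I do not expect any serious obstacle here; the only mild care needed is to handle the tail uniformly in both the sequence $x_n$ and the cutoff $y_n$ simultaneously, which is exactly what the single inequality $\exp(\sum|z_i|)-1$ accomplishes. The argument is standard from the theory of infinite products, so the proof should be short.
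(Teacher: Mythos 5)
Your approach (uniform tail estimate plus a head/tail split) is the standard argument for absolutely convergent infinite products and is essentially sound; the paper itself omits the proof as ``straightforward,'' so there is no official proof to match, and its internal sketch instead compares the two products via the ratio $\prod_i\bigl(1+\tfrac{(x-x_n)a^{-i}}{1-xa^{-i}}\bigr)$ --- your decomposition is at least as clean. One step as written does not quite close, however: in the final estimate you bound the head by $|P_N(x_n)|\le(1+M)^N$, a constant depending on $N$, while $N$ was chosen as a function of $\varepsilon$ (roughly $N\sim\log(1/\varepsilon)/\log a$). The resulting error $(1+M)^N\varepsilon$ need not tend to $0$ as $\varepsilon\to 0$ (it fails whenever $1+M\ge a$), so ``letting $\varepsilon\to 0$'' is not justified as stated. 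The repair is immediate and uses only the inequality you already invoked: $|P_N(x_n)|\le\prod_{i=1}^{N}(1+Ma^{-i})\le\exp\bigl(\tfrac{M}{a-1}\bigr)$, a bound uniform in $N$, after which the error is $O(\varepsilon)$ with a constant depending only on $M$ and $a$. The same uniform bound is also what is tacitly needed in your first paragraph to conclude that the partial products are uniformly Cauchy, since $|P_{N'}(x)-P_N(x)|=|P_N(x)|\,\bigl|\prod_{i=N+1}^{N'}(1-xa^{-i})-1\bigr|$.
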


\begin{comment}
\begin{proof}
\melanie{Will eventually cut this proof and say straightforward.  Just leaving it in now so we can check it.}
We can assume without loss generality that $x$ is arbitrarily small, since the limit clearly holds for the product of the first $M$ factors
for any $M$, and thus we can reduce the problem to showing the lemma with $x_n$ and $x$ replaced by $x_n/a^M$ and $x/a^M$.

Since for any real number $a>1$, the sum $\sum_{i=1}^\infty |xa^{-i}|$ converges locally uniformly  (as a series of functions of $x$), we have that $f(x)=\prod_{i=1}^\infty (1-xa^{-i})$ is continuous.  So as long as $y_n$ is sufficiently large, we can make $\prod_{i=1}^{y_n} (1-xa^{-i})$ arbitrarily close to $\prod_{i=1}^{\infty} (1-xa^{-i})$ (since $\prod_{i=y_n+1}^{\infty} (1-xa^{-i})=f(xa^{-y_n})$).

We have
\begin{equation}\label{E:prod}
\frac{\prod_{i=1}^{y_n} (1-x_na^{-i})}{\prod_{i=1}^{y_n} (1-xa^{-i})}=\prod_{i=1}^{y_n}(1+ \frac{(x-x_n)a^{-i}}{1-xa^{-i}}),
\end{equation}
and for $|xa^{-1}|\leq 1/2$ and $|(x-x_n)|a^{-1}\leq 1/8$ , we have that
$$
e^{-4|(x-x_n)|/(a-1)} \leq \prod_{i=1}^{y_n} e^{-4|(x-x_n)|a^{-i}}  \leq \prod_{i=1}^{y_n}(1+ \frac{(x-x_n)a^{-i}}{1-xa^{-i}}) \leq \prod_{i=1}^{y_n} e^{2|(x-x_n)|a^{-i}}\leq e^{2|(x-x_n)|/(a-1)}
$$
So by taking $n$ sufficiently large, we can make the product in Equation~\eqref{E:prod} arbitrary close to $1$.  From this the lemma follows.
\end{proof}
\end{comment}

\begin{proof}[Proof of Theorem~\ref{T:calc}]
Equation~\eqref{E:setup} and Theorem~\ref{T:probfrommult} establish Equation~\eqref{E:fixedn} of Theorem~\ref{T:calc}. 
%Let's first establish Equation~\eqref{E:fixedn} of Theorem~\ref{T:calc}. The number of normal subgroups $N$ of 
%$(\hat{F}_n)^{\bS}$ such that $(\hat{F}_n)^{\bS}/N\isom H$ is $|\Sur(\hat{F}_n,H)|/|\Aut(H)|$. For each of these subgroups, the %probability that the independent, uniform random elements  $r_1,\dots,r_{n+u}$ of $(\hat{F}_n)^{\bS}$ generate $N$ as an $F$-normal %subgroup is $|H|^{-(n+u)}$ (for the probability that all $r_i\in N$) times the probability that $n+u$ independent, uniform random %elements  of $N$ generate $N$ as a $F$-normal subgroup.  This latter probability is given by Theorem~\ref{T:probfrommult}, combined with %the multiplicities given by Corollaries~\ref{C:finalmab} and \ref{C:finalmnon}. 
Recall that for $G$ a finite abelian irreducible $H$-group, we have defined
$$
\lambda(S,H,G):=(h_H(G)-1) 
\sum_{\substack{\textrm{isom. classes of $H$-extensions $(E,\pi)$}\\ \textrm{such that $\ker \pi \simeq G$ as  $H$-groups,} \\
\textrm{and $E$ is level $S$}  }}
\frac{1}{|\Aut_H(E,\pi)|}
$$
and Corollary~\ref{C:finalmab} gives that
$$
\frac{h_H(G)^{m(S,n,H,G)}-1}{h_H(G)-1} =\sum_{\substack{\textrm{isom. classes of $H$-extensions $(E,\pi)$}\\ \textrm{$\ker \pi \simeq G$ as an $H$-group} \\
\textrm{$E$ is level $S$} }}
\frac{\sum_{\substack{D\in\mathcal{E}_H, D\leq E}} \nu(D,E) \left( \frac{|D|}{|H|} \right)^n}{|\Aut_H(E,\pi)|}.
$$
So we have
\begin{align*}
\lim_{n\ra \infty} \frac{h_H(G)^{m(S,n,H,G)}}{|G|^n} &=\lim_{n\ra \infty} \frac{h_H(G)^{m(S,n,H,G)}-1}{|G|^n}\\
&=
 \lim_{n\ra \infty} \frac{h_H(G)-1}{|G|^n} \sum_{\substack{\textrm{isom. classes of $H$-extensions $(E,\pi)$}\\ \textrm{$\ker \pi \simeq G$ as an $H$-group}\\
\textrm{$E$ is level $S$} }}
\frac{\sum_{\substack{D\in\mathcal{E}_H, D\leq E}} \nu(D,E) \left( \frac{|D|}{|H|} \right)^n}{|\Aut_H(E,\pi)|}. 
%&=
%\lambda(S,H,G)
\end{align*}
Note for any $D\ne E$ in the above sum, we have $\lim_{n\ra\infty} |D|^{n}|H|^{-n}|G|^{-n}=0$.
Thus we conclude (and similarly in the non-abelian case using Corollary \ref{C:finalmnon}) that $\lambda(S,H,G)$ is related to $m(S,n,H,G)$ as follows:
\begin{align}\label{E:relmandlam}
\lim_{n\ra \infty} \frac{h_H(G)^{m(S,n,H,G)}}{|G|^n}% &=\lim_{n\ra \infty} (h(G)-1) 
%\sum_{\substack{\textrm{isom. classes of $H$-extensions $(E,\pi)$}\\ \textrm{$\ker \pi$ isom. $G$ as an $H$-%group} \\
%\textrm{$E$ quasi-level $S$}  }}
%\frac{\sum_{\substack{D \textrm{ subgroup of } E\\\pi(S)=H}} \nu(S,E) \left( \frac{|D|}{|E|} \right)^n}{\#
%\Aut_H(E,\pi)}\\
&= \lambda(S,H,G) \quad\quad &\textrm{for $G\in \mathcal{A}_H$}\\
\lim_{n\ra\infty} 
\frac{m(S,n,H,G)}{|G|^n} &= \lambda(S,H,G) \quad\quad &\textrm{for $G\in \mathcal{N}$}. \notag
\end{align}
\begin{remark}\label{R:lamint}
Note that since by Remark~\ref{R:Gpow}, for $G\in\mathcal{A}_H$, we have that $|G|$ is a power of $h_H(G)$, it follows that $\lambda(S,H,G)$ is an integral power of 
$h_H(G)$, and that the limit above stabilizes for sufficiently large $n$.
%This is for finite $S$, but for infinite $S$, we will also have that $m$ is a positive integer, and that this limit holds, and the
%same conclusion holds.
\end{remark}
We next establish the final statement of Theorem~\ref{T:calc}.  Since
any irreducible $F$-group factor of $R$ is in $\bar{S}$, Corollary~\ref{C:nonewsimple} shows that it is a power of a simple group in the closure of $S$ under taking subgroups and quotients.  
 Lemma~\ref{L:HfactR} shows that the power is bounded by $|H|,$ showing the final statement of Theorem~\ref{T:calc}. 

To establish Equation~\eqref{E:lim}, it will suffice to take the limit of a factor in Equation~\eqref{E:fixedn}
corresponding to a single $G$ (since there are only finitely many $G$ with non-trivial factors, independent of $n$, by Lemma~\ref{L:R-cfp} and Corollary~\ref{C:CfpFin}). 
The factor in Equation~\eqref{E:fixedn} for a $G\in\mathcal{A}_H$ is
$$
\prod_{k=0}^{m(S,n,H,G)-1} (1-\frac{h_H(G)^k}{ |G|^{{n+u}}})=\prod_{i=1}^{m(S,n,H,G)} (1-\frac{h_H(G)^{m(S,n,H,G)} h_H(G)^{-i}}{ |G|^{{n+u}}}).
$$
If there are no extensions $E$ in the sum in Corollary~\ref{C:finalmab}, then $m(S,n,H,G)$ and $\lambda(S,H,G)$ are $0$. Otherwise $\lambda(S,H,G)>0$, and thus it follows from Equation~\eqref{E:relmandlam} that $m(S,n,H,G)\ra\infty$ as $n\ra\infty$.
So using Lemma~\ref{L:seq} and Equation~\eqref{E:relmandlam}, we obtain the limit in Equation~\eqref{E:lim} for a single factor $G\in\mathcal{A}_H$.  In a similar but simpler fashion, from Equation~\eqref{E:relmandlam}, we obtain the limit in Equation~\eqref{E:lim} for a single factor $G\in\mathcal{N}$.  This completes the proof of Theorem~\ref{T:calc}.
\end{proof}

\section{Countable additivity of $\mu_u$}\label{S:cntadd}

The goal of this section is to prove Theorem~\ref{T:countadd} which states that $\mu_u$ defined in Equation~\eqref{E:defmu} is countably additive on the algebra $\mathcal{A}$.
It then follows from Carath\'{e}odory's extension theorem that $\mu_u$ can be uniquely extended to a measure on the Borel sets of $\mathcal{P}$.
% This will directly imply that $\mu_u$ is a pre-measure on the algebra $\mathcal{A}$. 
The heart the proof of Theorem~\ref{T:countadd} is Theorem~\ref{T:totall}. We will first prove Theorem~\ref{T:totall} in Section~\ref{SS:proof-totall}, and then prove Theorem~\ref{T:countadd} in Section~\ref{SS:countadd}.

\begin{theorem}\label{T:countadd}
Let $u$ be an integer.  Then $\mu_u$ is countably additive on the algebra $\mathcal{A}$ generated by the $U_{S,H}$ for $S$ a finite set of finite groups and $H$ a finite group.
\end{theorem}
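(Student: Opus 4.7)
The plan is to reduce Theorem~\ref{T:countadd} to the \emph{total mass} assertion
\begin{equation}\label{E:totalmassplan}
\sum_{H}\mu_u(U_{S,H})=1\qquad\text{for every finite set $S$ of finite groups,}
\end{equation}
where $H$ ranges over isomorphism classes of finite level-$S$ groups. This will be Theorem~\ref{T:totall} and is the main technical result of the section.

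\textbf{From \eqref{E:totalmassplan} to countable additivity.} Every $A\in\mathcal{A}$ is a finite Boolean combination of basic opens, and by grouping with respect to a sufficiently large finite $T$ (any one containing all $S_i$ used in the expression), $A$ can be written as a disjoint union $A=\bigsqcup_{H\in\mathcal{H}}U_{T,H}$ for some set $\mathcal{H}$ of finite level-$T$ groups (possibly infinite but cut out by finitely many Boolean conditions). The trivial half $\sum_{H\in\mathcal{H}}\mu_u(U_{T,H})\leq\mu_u(A)$ follows by taking partial sums and passing to $\lim_n$. For the reverse inequality, apply Fatou to the complement $A^c=\bigsqcup_{H\notin\mathcal{H}}U_{T,H}$:
$$\sum_{H\notin\mathcal{H}}\mu_u(U_{T,H})\leq\liminf_{n\to\infty}\mu_{u,n}(A^c)=1-\mu_u(A),$$
so by \eqref{E:totalmassplan}, $\sum_{H\in\mathcal{H}}\mu_u(U_{T,H})=1-\sum_{H\notin\mathcal{H}}\mu_u(U_{T,H})\geq\mu_u(A)$. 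This is exactly countable additivity on $\mathcal{A}$, after which Carath\'{e}odory's extension theorem produces the desired Borel probability measure.

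\textbf{Proving \eqref{E:totalmassplan}.} The inequality $\leq 1$ is Fatou applied to the probability measures $\mu_{u,n}$. The reverse inequality is the content of Theorem~\ref{T:totall}: one must rule out escape of mass as $n\to\infty$ by producing bounds on $\mu_{u,n}(U_{S,H})$ that are uniform in $n$ and summable over $H$. The closed-form expression in Equation~\eqref{E:fixedn}, combined with the multiplicity formulas from Corollaries~\ref{C:finalmab} and \ref{C:finalmnon}, is the starting point. The family of $H$ that contributes is controlled by chief factor pairs: by Lemma~\ref{L:R-cfp}, every chief factor pair of such $H$ lies in the finite set $\CF(\bar S)$ (Corollary~\ref{C:CfpFin}), and the outer action on each chief factor is of level $\ell-1$, where $\ell$ bounds the orders of groups in $S$. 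I would prove \eqref{E:totalmassplan} by induction on $|\CF(\bar S)|$ (or more carefully on a well-ordering capturing chief series lengths): peel off a minimal normal subgroup $M\unlhd H$, rewrite $\sum_H$ as an iterated sum first over the quotient $H':=H/M$ (whose chief factor pairs lie in a strictly smaller set $\CF(\bar{S'})$) and then over the $H'$-extensions by $M$, and invoke the inductive hypothesis on the outer sum over $H'$. The outer-action bound of Corollary~\ref{C:CfpFin} makes the inner sum finite and uniformly bounded in $n$, which is what permits interchanging the limit $n\to\infty$ with the outer sum.

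\textbf{Main obstacle.} The chief difficulty is combinatorial bookkeeping: the explicit formula in Theorem~\ref{T:calc} involves products over $\mathcal{A}_H$ and $\mathcal{N}$ with denominators $|\Aut_H(E,\pi)|$, and these must be packaged so that the contribution of extensions above a fixed $H'$ telescopes cleanly with the inductive hypothesis. A secondary concern is that uniformity must be explicit enough that the inner sums admit a bound independent of $n$ and summable over $H'$, so that dominated convergence applies at the outer level. Once that is arranged, summing the explicit limiting values from Equation~\eqref{E:limval} against the total mass $1$ at each finite $n$ forces equality in the Fatou inequality in the limit, giving \eqref{E:totalmassplan}.
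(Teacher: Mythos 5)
Your identification of the total-mass statement $\sum_H \mu_u(U_{S_\ell,H})=1$ as the technical heart of the section is right, and your derivation from it of additivity for a \emph{single-level} decomposition $A=\bigsqcup_{H\in\mathcal{H}}U_{T,H}$ (Fatou applied to the complement, then subtracting from the total mass) is correct; this is essentially Corollary~\ref{C:addB} in the paper. Your sketch of the total-mass statement itself is also in the spirit of the paper's proof, which inducts on the level $\ell$ rather than on chief factor pairs: the key uniformity is a nonzero lower bound $c(u,\ell,\widetilde H)$ on the product part of Equation~\eqref{E:fixedn}, uniform over all level-$\ell$ groups $H$ lying over a fixed $\widetilde H=H^{\bS_{\ell-1}}$, obtained by bounding the number of contributing irreducible factors via Lemmas~\ref{L:ab-con} and \ref{L:nab-con}; dominated convergence then justifies exchanging $\lim_{n\to\infty}$ with the sum over such $H$.

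The genuine gap is the sentence ``This is exactly countable additivity on $\mathcal{A}$.'' It is not. What you have proved is that $\mu_u(A)=\sum_{H\in\mathcal{H}}\mu_u(U_{T,H})$ when $A\in\mathcal{A}$ is partitioned into basic opens \emph{at one fixed finite level} $T$. Countable additivity on the algebra requires $\mu_u(A)=\sum_n\mu_u(A_n)$ for an arbitrary countable disjoint decomposition $A=\bigsqcup_n A_n$ with $A_n\in\mathcal{A}$, and there the $A_n$ are in general defined at levels $\ell_n\to\infty$, so there is no single finite $T$ to which the whole decomposition can be refined. Equivalently (setting $B_N=A\setminus\bigcup_{n\leq N}A_n$), one must show that a decreasing sequence $B_1\supset B_2\supset\cdots$ in $\mathcal{A}$ with $\bigcap_N B_N=\emptyset$ satisfies $\mu_u(B_N)\to0$; this continuity at $\emptyset$ does not follow from the single-level statement. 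The paper closes this gap with a compactness argument: assuming $\mu_u(B_\ell)\geq\epsilon$ for all $\ell$, it uses the single-level additivity to replace each $B_\ell$ by a \emph{finite} union $K_\ell$ of basic opens with $\mu_u(B_\ell\setminus K_\ell)<\epsilon/2^{\ell+1}$, forms $C_\ell=\bigcap_{j\leq\ell}K_j$ (nonempty, of measure at least $\epsilon/2$), and then extracts, K\"onig's-lemma style, a compatible tower of finite groups $H_\ell$ whose inverse limit is a point of $\bigcap_\ell B_\ell$, a contradiction. Some version of this step is indispensable and is missing from your proposal.
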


\begin{theorem}\label{T:totall}
Let $\ell$ be a positive integer. Recall that $S_{\ell}$ is defined to be the set consisting of all groups of order less than or equal to $\ell$. For a non-negative integer $u$, we have
$$\sum_{H \text{ is finite and level }\ell} \mu_u (U_{S_{\ell}, H}) = 1. $$
\end{theorem}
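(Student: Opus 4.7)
The plan is to prove the theorem via dominated convergence, swapping the limit $n\to\infty$ with the sum over $H$. For each fixed $n$, the random group $X_{u,n}^{\bar{S}_\ell}$ is a quotient of the finite group $(\hat{F}_n)^{\bar{S}_\ell}$, so it takes values in finitely many isomorphism classes and
\[
\sum_{H\text{ finite level }\ell}\mu_{u,n}(U_{S_\ell,H})=1.
\]
By Theorem~\ref{T:calc}, $\mu_{u,n}(U_{S_\ell,H})\to\mu_u(U_{S_\ell,H})$ as $n\to\infty$ for each fixed $H$. The theorem therefore reduces to producing a summable function that dominates $\mu_{u,n}(U_{S_\ell,H})$ uniformly in $n$.

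The dominating function is read off from Equation~\eqref{E:fixedn}. Its product factor has a direct probabilistic interpretation coming from Theorem~\ref{T:probfrommult} (the probability that uniform random elements of $R$ normally generate $R$) and is in particular at most $1$; combined with the obvious bound $|\Sur(\hat{F}_n,H)|\leq|H|^n$, this yields
\[
\mu_{u,n}(U_{S_\ell,H})\;\leq\;\frac{1}{|\Aut(H)|\,|H|^u}
\]
uniformly in $n$. The theorem thus reduces to the purely group-theoretic claim that
\[
\sum_{H\text{ finite level }\ell}\frac{1}{|\Aut(H)|\,|H|^u}<\infty\qquad(u\geq 0).
\]

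To establish this summability, I would parameterize level-$\ell$ groups by their chief factor sequences. By Lemma~\ref{L:J-H} the multiset of isomorphism classes of chief factor pairs in a chief series of a finite group is a well-defined invariant, and by Corollary~\ref{C:CfpFin} the set $\CF(\bar{S}_\ell)$ is finite with every $(M,A)\in\CF(\bar{S}_\ell)$ satisfying $|A/\Inn(M)|\leq\ell-1$. For each admissible chief factor multiset, one can bound the number of level-$\ell$ groups $H$ realizing it (by counting the cocycle/splitting data at each extension step, all controlled by the uniformly bounded outer action) and simultaneously lower-bound $|\Aut(H)|$ by a product of contributions coming from each chief factor $M_i$. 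The weighted sum then collapses to a convergent geometric-type series, since each added chief factor contributes a factor of at least $|M_i|^u|\Aut(M_i)|\geq 1$ to the denominator with enough strict inequalities along the way to overcome the combinatorial growth.

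The main obstacle is this last combinatorial/quantitative step: estimating the number of level-$\ell$ groups with a prescribed chief factor multiset and matching it against a sufficient lower bound on $|\Aut(H)|$. The chief-factor-pair framework developed in Section~\ref{S:factors}, and especially the finiteness of $\CF(\bar{S}_\ell)$ together with the bound $|A/\Inn(M)|\leq\ell-1$ in Corollary~\ref{C:CfpFin}, is exactly what provides the uniformity-in-$n$ that the introduction alludes to: once the combinatorics of chief factor pairs is controlled independently of $H$ and $n$, dominated convergence closes the argument immediately.
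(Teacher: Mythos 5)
Your overall strategy --- exchanging the limit in $n$ with the sum over $H$ by dominated convergence, using the bound $\mu_{u,n}(U_{S_\ell,H})\le \frac{1}{|\Aut(H)||H|^u}$ read off from Equation~\eqref{E:fixedn} --- is the same as the paper's, and that domination is correct. The gap is in the summability of your dominating function: the claim that $\sum_{H\text{ finite level }\ell}\frac{1}{|\Aut(H)||H|^u}<\infty$ is false once $\ell$ is moderately large. For instance, if $\ell\ge p^3$ for an odd prime $p$, then $\bar{S}_\ell$ contains the Heisenberg group of order $p^3$, which is the relatively free group of rank $2$ in the variety of nilpotent class-$2$, exponent-$p$ groups; the rank-$r$ relatively free group of that variety embeds into a direct power of the Heisenberg group, so \emph{every} finite class-$2$ exponent-$p$ group is level $\ell$. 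By Higman's lower bound there are at least $p^{\frac{2}{27}k^3(1+o(1))}$ such groups of order $p^k$, while any group of order $p^k$ satisfies $|\Aut(H)|\le p^{k^2}$ (an automorphism is determined by the images of at most $k$ generators). Hence the sum over these $H$ alone is at least $\sum_k p^{\frac{2}{27}k^3-k^2-uk}=\infty$. No refinement of the chief-factor bookkeeping can repair this, since all of these groups have the single chief factor pair $(\Z/p\Z,1)$ repeated $k$ times; the number of groups with a prescribed chief factor multiset genuinely overwhelms $|\Aut(H)||H|^u$.

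The paper never sums over all level-$\ell$ groups. It restricts the dominating function to the $H$ that are \emph{achievable} (i.e.\ $\mu_{u,n}(U_{S_\ell,H})\ne 0$ for some $n$) and obtains summability by a bootstrap rather than by counting groups: Lemma~\ref{L:P-Const}, via the chief-factor-pair bounds of Lemmas~\ref{L:ab-con} and~\ref{L:nab-con}, shows that the product factor $P_{u,n}(U_{S_\ell,H})$ is, \emph{whenever nonzero}, bounded below by a constant $c(u,\ell,\widetilde H)>0$ depending only on $u$, $\ell$, and $\widetilde H=H^{\bar{S}_{\ell-1}}$. Consequently $\mu_{u,n}(U_{S_\ell,H})\ge \tfrac12\,c(u,\ell,\widetilde H)\,\frac{1}{|\Aut(H)||H|^u}$ for large $n$ on achievable $H$, and since the $\mu_{u,n}(U_{S_\ell,H})$ sum to at most $1$, the dominating function restricted to achievable $H$ in the fiber over a fixed $\widetilde H$ has sum at most $2/c(u,\ell,\widetilde H)$. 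Because this constant depends on $\widetilde H$, dominated convergence is applied fiber-by-fiber over $\widetilde H$, and an induction on $\ell$ (summing the resulting identity over all level-$(\ell-1)$ groups $\widetilde H$, using the inductive hypothesis that their measures sum to $1$) completes the proof. In short, the chief-factor machinery is used to produce a uniform \emph{lower} bound on the nonzero probabilities, not an upper bound on the number of groups; to close your argument you would need both the restriction to achievable $H$ and this self-referential summability step (or an entirely new count of achievable groups), together with the induction on $\ell$.
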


\subsection{Proof of Theorem \ref{T:totall}}\label{SS:proof-totall}
Assume $\ell$ is a positive integer, $H$ is a finite level $\ell$ group and let $\tilde{H}=H^{\bS_{\ell-1}}$ .
%and We denote the pro-$\bS_{\ell-1}$ completion of $H$ by $\widetilde{H}$ and $\chi: H \to \widetilde{H}$ be the canonical surjection. \melanie{why not take the canonical one?}
In Lemmas~\ref{L:ab-con} and \ref{L:nab-con}, we will first give upper bounds for the number of irreducible factors $G$ with non-zero $m(S_{\ell}, n, H, G)$ for some $n$ that are isomorphic to a given underlying group $M$.

\begin{lemma}\label{L:ab-con}
 Suppose $M$ is a direct product of isomorphic abelian simple groups. Then 
 $$\#\left\{ G\in \mathcal{A}_H \,\Bigg|\, 
 \begin{aligned}
 & G\simeq M \text{ and }\\
 & m(S_\ell, n, H, G)\neq 0\text{ for some }n
 \end{aligned}
 \right\} \leq \sum_{(M,A)\in \CF(\bS_{\ell})} |\Sur (\widetilde{H}, A) |,$$
 where the notation on the right-hand side above means that the sum is taken over all chief factor pairs in $\CF(\bS_\ell)$ whose first components are isomorphic to $M$ as groups.
\end{lemma}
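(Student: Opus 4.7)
The plan is to associate to each $G$ counted on the left-hand side a chief factor pair $(M,A) \in \CF(\bS_\ell)$, and then to show that at most $|\Sur(\widetilde{H}, A)|$ such $G$'s can be assigned to any fixed pair. First I would observe that if $G \in \mathcal{A}_H$ is abelian, $G \simeq M$ as groups, and $m(S_\ell, n, H, G) \neq 0$ for some $n$, then letting $1 \to R \to F \to H \to 1$ be the associated fundamental short exact sequence, $G$ appears as an irreducible $F$-subgroup of $R$. Because $G$ is abelian, the conjugation action of $F$ on $G$ factors through $H$; let $A \subseteq \Aut(G)$ be its image. By Lemma~\ref{L:R-cfp} the pair $(G,A) = (G, \rho_G(F))$ lies in $\CF(\bS_\ell)$, and transferring along any group isomorphism $G \simeq M$ produces an associated chief factor pair $(M,A) \in \CF(\bS_\ell)$ with first component $M$.

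Second, I would fix an isomorphism class representative $(M,A) \in \CF(\bS_\ell)$ and count the $H$-group isomorphism classes on $M$ whose image in $\Aut(M)$ is $\Aut(M)$-conjugate to $A$. Such an $H$-group structure, after conjugation by a suitable element of $\Aut(M)$, is a surjection $H \twoheadrightarrow A$, and two such surjections give isomorphic $H$-group structures precisely when they differ by the conjugation action of $N_{\Aut(M)}(A)$ on $A$. Hence the number of $H$-group isomorphism classes mapping to $(M,A)$ is at most $|\Sur(H,A)|$. Since $M$ is abelian, $\Inn(M)$ is trivial, so Corollary~\ref{C:CfpFin} tells us that $A = A/\Inn(M)$ is of level $\ell - 1$; consequently every surjection $H \to A$ factors through $\widetilde{H} = H^{\bS_{\ell-1}}$, giving $|\Sur(H,A)| = |\Sur(\widetilde{H}, A)|$.

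Summing this bound over isomorphism classes of chief factor pairs $(M,A) \in \CF(\bS_\ell)$ with first component $M$ then yields the desired inequality. The one place to be careful is the correspondence in the middle paragraph — checking that every $H$-group structure with image in the conjugacy class of $A$ is indeed captured by a surjection $H \to A$, and that isomorphic structures correspond to surjections related by $N_{\Aut(M)}(A)$ — but because only an upper bound is needed, it suffices to note that each iso class is represented by \emph{at least} one surjection, which is immediate once a conjugator in $\Aut(M)$ has been chosen. The rest is purely bookkeeping.
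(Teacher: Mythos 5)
Your argument is correct and follows essentially the same route as the paper: both attach to each relevant $G$ the chief factor pair $(G,\rho_G(F))\in\CF(\bS_\ell)$ via Lemma~\ref{L:R-cfp}, observe that the $H$-group structure is recovered from a surjection $H\to A$, and use Corollary~\ref{C:CfpFin} to replace $\Sur(H,A)$ by $\Sur(\widetilde{H},A)$. The only difference is organizational — the paper packages the count as an injection on pairs $(G,\phi)$ with $\phi:G\simeq M$, while you fiber the count over isomorphism classes of chief factor pairs — and both yield the same bound.
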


\begin{proof}
We give an injection  
 $$\left\{ G\in \mathcal{A}_H, \phi \,\Bigg|\, 
 \begin{aligned}
 & \phi: G\simeq M \text{ and }\\
 & m(S_\ell, n, H, G)\neq 0\text{ for some }n
 \end{aligned}
 \right\} \ra \{(M,A)\in \CF(\bS_{\ell}),\pi  \,|\, \pi\in\Sur (H, A)\}.$$
 Consider $G\in \mathcal{A}_H$ and $\phi:G\simeq M$ such that $m(S_\ell,n,H,G)\neq 0$ for some $n$. Assume 
$$1 \ra R \ra F \ra H \ra 1$$
is the fundamental short exact sequence associated to $S_\ell,$ $n,$ and $H$. Then $G$ appears as a factor in $R$ and $(G, \rho_{G}(F))\in \CF(\bS_{\ell})$. %. by Lemma~\ref{L:R-cfp}.
  Using $\phi:G\simeq M$, we have that the quotient $\rho_{G}(F)$ of $H$ acts on $M$, and so $(M, \rho_{G}(F))\in \CF(\bS_{\ell}).$
We let $\pi$ be the quotient map from $H$ to $\rho_{G}(F)$.  Given $(M,A)\in \CF(\bS_{\ell})$ and $\pi\in\Sur (H, A)$, we can use $\pi$ to give $M$ the structure of an irreducible $H$-group, and let $\phi$ be the identity.  This recovers $G$ and $\phi$, though possibly without $m(S_\ell, n, H, G)\neq 0$.
By Corollary \ref{C:CfpFin}, if $(M,A)\in \CF(\bS_\ell)$, then $A/\Inn(M)\simeq A$ is a group of level $\ell-1$. Then by the definition of pro-$\bS$ completion, $\Sur(H, A)$ is one-to-one corresponding to $\Sur(\widetilde{H}, A)$ and we finish the proof.
\end{proof}

Similarly, for non-abelian irreducible factors, we have the following lemma.

\begin{lemma}\label{L:nab-con}
Suppose $M$ is a direct product of isomorphic non-abelian simple groups. Then 
\begin{eqnarray*}
 && \# \left\{ \text{isom. classes of $H$-extensions } (E,\pi)\, \Bigg|\,
  \begin{aligned}
  & \ker\pi \simeq M \text{ is irred. $E$-group}\\
  & E \text{ is level }\ell
  \end{aligned}
\right\} \\
&\leq& \sum_{(M,A)\in \CF(\bS_\ell)} |\Sur(\widetilde{H},  A/\Inn(M))|.
\end{eqnarray*}
\end{lemma}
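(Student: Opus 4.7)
The plan is to mirror the proof of Lemma~\ref{L:ab-con} by constructing an injection from the set of isomorphism classes of $H$-extensions on the left-hand side into $\bigsqcup_{(M,A)\in\CF(\bS_\ell)}\Sur(\widetilde{H}, A/\Inn(M))$, after fixing a representative for each isomorphism class of chief factor pair $(M,A)\in\CF(\bS_\ell)$ (with first component the group $M$ given in the lemma). Given an $H$-extension $(E,\pi)$ with $\ker\pi\simeq M$ an irreducible $E$-group and $E$ of level $\ell$, I would choose any group isomorphism $\ker\pi\simeq M$ and set $A:=\rho_M(E)\subseteq \Aut(M)$. Then $(M,A)$ is a chief factor pair (because $M$ is an irreducible $E$-group and the action is faithful on $M$), and it lies in $\CF(\bS_\ell)$ since $E$ is level $\ell$. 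The conjugation map $\rho_M \colon E\to A$ composed with $A\to A/\Inn(M)$ annihilates $\ker\pi\cdot \Cen_E(\ker\pi)$, hence descends to a surjection $f\colon H\to A/\Inn(M)$, which by Corollary~\ref{C:CfpFin} factors through $\widetilde{H}$.

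The main obstacle is verifying injectivity of this assignment, which in the non-abelian case genuinely requires something beyond what was used in Lemma~\ref{L:ab-con}: a single chief factor pair together with an outer action could a priori correspond to several non-isomorphic $H$-extensions. The key input I would invoke is that $M$, as a direct product of non-abelian simple groups, satisfies $Z(M)=1$. By the classical theory of extensions with non-abelian kernel, for a fixed outer action $H\to \Out(M)$, the obstruction to the existence of an extension of $H$ by $M$ inducing that outer action lives in $H^3(H, Z(M))$, and the set of such extensions up to $H$-extension isomorphism is a torsor under $H^2(H, Z(M))$; both groups vanish when $Z(M)=1$, so the outer action alone determines the $H$-extension. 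Composing $f$ with the inclusion $A/\Inn(M)\hookrightarrow \Out(M)$ recovers precisely the outer action induced by $(E,\pi)$, so the pair $((M,A), f)$ reconstructs $(E,\pi)$ up to isomorphism, yielding injectivity. Any overcounting stemming from different choices of the identification $\ker\pi\simeq M$ (which are related by the action of $\Aut(M,A)$ on $\Sur(\widetilde{H}, A/\Inn(M))$) only sharpens the direction of the bound, so the stated inequality follows.
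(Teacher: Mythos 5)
Your proposal is correct and builds exactly the same injection as the paper: send $(E,\pi)$ to the chief factor pair $(\ker\pi,\rho_{\ker\pi}(E))\in\CF(\bS_\ell)$ together with the induced surjection $H\to \rho_{\ker\pi}(E)/\Inn(M)$, which factors through $\widetilde{H}$ by Corollary~\ref{C:CfpFin}. The only place you diverge is the verification of injectivity. The paper argues directly: if $(E_1,\pi_1)$ and $(E_2,\pi_2)$ map to the same $(M,A)$ and the same $\phi\in\Sur(H,A/\Inn(M))$, then each $E_i$ is identified with the fiber product of $\phi$ and $A\to A/\Inn(M)$, so they are isomorphic as $H$-extensions. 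You instead invoke the Eilenberg--MacLane classification of extensions with non-abelian kernel: since $Z(M)=1$, the obstruction group $H^3(H,Z(M))$ and the torsor group $H^2(H,Z(M))$ both vanish, so the outer action determines the extension. These are two faces of the same fact --- the fiber product $A\times_{A/\Inn(M)}H$ is precisely the unique extension realizing a given abstract kernel when the kernel is centerless --- so your citation is legitimate, but the paper's version is self-contained and avoids appealing to the cohomological machinery. Your closing remark about the ambiguity in the choice of identification $\ker\pi\simeq M$ is also handled correctly: any fixed choice of representatives yields a well-defined map, and possible collapsing under $\Aut(M,A)$ only strengthens the inequality.
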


\begin{proof}
We give an injection
\begin{eqnarray*}
  && \left\{ \text{isom. classes of $H$-extensions } (E,\pi)\, \Bigg|\,
  \begin{aligned}
  & \ker\pi \simeq M \text{ is irred. $E$-group}\\
  & E \text{ is level }\ell
  \end{aligned}
\right\} \\
&\to& \{(M,A)\in \CF(\bS_{\ell}), \phi \mid \phi \in \Sur(H,A/\Inn(M))\}.
\end{eqnarray*}
Consider an isomorphism class of $H$-extension $(E,\pi)$ such that $\ker\pi\simeq M$ is an irreducible $E$-group and $E$ is level $\ell$. Then $(\ker \pi, \rho_{\ker\pi}(E)) \in \CF(\bS_{\ell})$, and $\rho_{\ker\pi}$ induces a surjection $\phi: H\to \rho_{\ker \pi}(E)/\Inn(M)$ since $\rho_{\ker \pi}$ is an isomorphism when restricted on $\ker\pi$ that maps $\ker\pi$ to $\Inn(M)$. Suppose $(M,A)\in \CF(\bS_{\ell})$ and $\phi\in \Sur(H, A/\Inn(M))$. If two $H$-extensions $(E_1, \pi_1)$ and $(E_2, \pi_2)$ both map to $(M,A)$ and $\phi$, then from the diagram below we see that $E_1$ and $E_2$ are both the fiber product of $\phi$ and $A\to A/\Inn(M)$, so $(E_1,\pi_1)$ and $(E_2, \pi_2)$ are isomorphic as $H$-extensions. Therefore, the map defined at the begin of the proof is an injection. Then the lemma follows as $\Sur(H,A)$ is one-to-one corresponding to $\Sur(\widetilde{H},A)$.

\begin{center}
\begin{tikzcd}
 E_2
 \arrow[bend left]{drr}{\pi_2}
 \arrow[swap, bend right]{ddr}{\rho_{\ker\pi_2}}
 \arrow[dashed]{dr}{\iota}& & \\
 & E_1 \arrow{r}{\pi_1}
 \arrow{d}{\rho_{\ker \pi_1}}& 
 H \arrow{d}{\phi} \\
 & A \arrow{r}{} & 
 A/\Inn(M) 
\end{tikzcd}
\end{center}
\end{proof}

Let $P_{u,n}(U_{S_\ell, H})$ denote the product in Equation~\eqref{E:fixedn}, i.e.
$$P_{u,n}(U_{S_\ell,H}) = \prod_{G\in \mathcal{A}_H} \prod_{k=0}^{m(S_\ell, n, H, G)-1} (1-\frac{h_H(G)^k}{|G|^{n+u}}) \prod_{G\in \mathcal{N}}(1-|G|^{-n-u})^{m(S_\ell, n, H,G)}.$$

\begin{lemma}\label{L:P-Const}
Suppose $\ell>1$ ,$n\geq 1$ and $u> -n$ are integers and $\widetilde{H}$ is a finite level $\ell-1$ group. Then there exists a non-zero constant $c(u,\ell,\widetilde{H})$ depending on $u, \ell$ and $\widetilde{H}$ such that, for every finite level $\ell$ group $H$ with $H^{\bS_{\ell-1}}=\widetilde{H}$,
 either $P_{u,n}(U_{S_\ell,H})\geq c(u,\ell,\widetilde{H})$ or $P_{u,n}(U_{S_\ell,H})=0$ .
\end{lemma}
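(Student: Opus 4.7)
The plan is to exhibit $P_{u,n}(U_{S_\ell,H})$ as a product of boundedly many factors, each of which, when nonzero, is bounded below by a positive constant depending only on $u$, $\ell$, and $\widetilde{H}$.

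First I would apply Lemmas~\ref{L:ab-con} and \ref{L:nab-con}, together with the finiteness of $\CF(\bS_\ell)$ from Corollary~\ref{C:CfpFin}, to bound the number of $G\in\mathcal{A}_H$ (respectively $G\in\mathcal{N}$) with $m(S_\ell,n,H,G)\ne 0$ by constants $N_1, N_2$ depending only on $\widetilde{H}$ and $\ell$. By (the proof of) Corollary~\ref{C:CfpFin}, each such $G$ furthermore satisfies $|G|\leq \ell$. Hence $P_{u,n}(U_{S_\ell,H})$ is effectively a product of at most $N_1+N_2$ nontrivial factors, and it suffices to bound each such factor below.

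For the non-abelian factors $(1-|G|^{-n-u})^{m(S_\ell,n,H,G)}$, note that $|G|\geq 60$ and $n+u\geq 1$ give $|G|^{-n-u}\leq 1/2$, so the inequality $(1-x)^m\geq e^{-2xm}$ reduces the problem to upper bounding $m(S_\ell,n,H,G)\cdot |G|^{-n-u}$. I would read this off from Corollary~\ref{C:finalmnon}: using $|D|\leq |E|=|H||G|$ in the M\"obius sum, Lemma~\ref{L:nab-con} to control the number of contributing $H$-extensions, and uniform bounds on the remaining M\"obius/automorphism data (all depending only on $\widetilde{H}, \ell$), one obtains $m(S_\ell,n,H,G)\leq C_1(\ell,\widetilde{H})\cdot |G|^n$. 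Therefore $m(S_\ell,n,H,G)\cdot |G|^{-n-u}\leq C_1\cdot |G|^{-u}\leq C_1\cdot\max(1,\ell^{-u})$, a constant depending on $u, \ell, \widetilde{H}$.

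The hard part will be the abelian case, since the multiplicity $m=m(S_\ell,n,H,G)$ can sit just below the threshold $\mathfrak{m}(n+u)$ at which the product vanishes, where $\mathfrak{m}$ is the integer of Remark~\ref{R:Gpow} with $|G|=h_H(G)^{\mathfrak{m}}$. The key is to exploit this divisibility directly: the factor rewrites as
$$
\prod_{k=0}^{m-1}\bigl(1-h_H(G)^{k-\mathfrak{m}(n+u)}\bigr),
$$
and after reindexing $j=m-1-k$ and setting $\alpha=h_H(G)^{m-1-\mathfrak{m}(n+u)}$, it becomes $\prod_{j=0}^{m-1}(1-\alpha\, h_H(G)^{-j})$. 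Nonvanishing of the product is exactly the condition $m-1<\mathfrak{m}(n+u)$, which forces $\alpha\leq h_H(G)^{-1}$. Since $G$ is nontrivial, the identity and zero endomorphisms of $G$ are distinct elements of $\Hom_H(G,G)$, so $h_H(G)\geq 2$ and $\alpha\leq 1/2$; thus each abelian factor is bounded below by the universal constant $\prod_{j=0}^{\infty}(1-2^{-j-1})>0$.

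Multiplying the at most $N_1+N_2$ lower bounds yields the desired positive constant $c(u,\ell,\widetilde{H})$. The heart of the argument is the abelian case: the divisibility relation $|G|=h_H(G)^{\mathfrak{m}}$ is what prevents the product from being arbitrarily close to zero whenever it is nonzero. All the other pieces are routine combinations of the structural results of Sections~\ref{S:factors}--\ref{S:countqt} with elementary analytic inequalities.
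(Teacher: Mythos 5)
Your proposal is correct and follows essentially the same route as the paper's proof: bound the number of contributing factors via Lemmas~\ref{L:ab-con} and \ref{L:nab-con} together with Corollary~\ref{C:CfpFin}, handle each abelian factor by exploiting the divisibility $|G|=h_H(G)^{\mathfrak{m}}$ from Remark~\ref{R:Gpow} so that the nonzero terms $h_H(G)^k|G|^{-n-u}$ are distinct negative powers of an integer $\geq 2$ (your reindexed product $\prod_{j\geq 0}(1-2^{-j-1})$ is the paper's $\prod_{i\geq 1}(1-2^{-i})$), and handle the non-abelian factors by bounding the exponent-to-size ratio $m(S_\ell,n,H,G)|G|^{-n-u}$. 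The one place your justification as written would not go through is the claim that $m(S_\ell,n,H,G)\leq C_1(\ell,\widetilde{H})|G|^n$ follows from ``uniform bounds on the remaining M\"obius/automorphism data'': the poset of sub-$H$-extensions $D\leq E$ and the values $\nu(D,E)$ depend on $E$ and hence on $H$, which ranges over infinitely many groups with fixed $H^{\bS_{\ell-1}}=\widetilde{H}$, so no such uniform bound is available. The correct (and easy) fix, which is what the paper does, is to observe that by Proposition~\ref{P:countfromF} the entire M\"obius sum equals $|\Sur(\rho,\pi)|\leq (|E|/|H|)^n=|G|^n$, so that $m(S_\ell,n,H,G)$ is at most $|G|^n$ times the number of contributing $H$-extensions, which Lemma~\ref{L:nab-con} bounds in terms of $\ell$ and $\widetilde{H}$ alone.
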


\begin{proof}
For each $G\in \mathcal{A}_H$, $G$ is a direct product of isomorphic abelian simple groups, i.e. $G$ is a direct product of $\Z/p\Z$ for some prime $p$. By Remark \ref{R:Gpow}, $h_H(G)$ is a power of $p$ and $|G|$ is a power of $h_H(G)$. Note that both of the trivial map (every element maps to 1) and the identity map of $G$ respect the $H$-action, therefore $h_H(G)>1$. So if the product $\prod_{k=0}^{m(S_\ell, n, H, G)-1}(1-\frac{h_H(G)^k}{|G|^{n+u}})$ is nonzero, then it is a product of $1-p^{-i}$ for distinct positive integers $i$, which is greater than $\prod_{k=1}^{\infty}(1-p^{-k}) \geq \prod_{k=1}^{\infty}(1-2^{-k})$. If $P_{u,n}(U_{S_\ell,H})\neq 0$, then
\begin{eqnarray*}
\prod_{G \in \mathcal{A}_H} \prod_{k=0}^{m(S_\ell, n, H, G)-1}(1-\frac{h_H(G)^k}{|G|^{n+u}})
  &=& \prod_{\substack{G\in \mathcal{A}_H \text{ and }\\ m(S_\ell, n, H, G)\neq 0}} \prod_{k=0}^{m(S_\ell, n, H, G)-1}(1-\frac{h_H(G)^k}{|G|^{n+u}})\\
  &\geq& \prod_{\substack{G\in \mathcal{A}_H \text{ and }\\ m(S_\ell, n, H, G)\neq 0 \text{ for some }n}} \prod_{k=1}^{\infty} (1-2^{-k})\\
  &=& \left[\prod_{k=1}^{\infty}(1-2^{-k})\right]^{\#\{G\in\mathcal{A}_H \mid\, m(S_\ell, n, H, G)\neq 0 \text{ for some }n\}}\\
&\geq& \left[\prod_{k=1}^{\infty}(1-2^{-k})\right]^{\sum\limits_{\substack{(M,A)\in \CF(\bS_\ell)\\M \text{ is abelian}}} |\Sur(\widetilde{H}, A)|},
\end{eqnarray*}
where the last inequality follows from Lemma~\ref{L:ab-con}.
Therefore, if $P_{u,n}(U_{S_\ell,H})$ is non-zero, then its abelian part has a lower bound depending only on $\ell$ and $\widetilde{H}$. Similarly, for the non-abelian part, we consider
\begin{eqnarray*}
  \prod_{G \in \mathcal{N}} (1-|G|^{-n-u})^{m(S_\ell,n,H,G)}
  &\geq& \prod_{G\in \mathcal{N}} \left[(1-\frac{1}{2})^2\right]^{\frac{m(S_\ell,n,H,G)}{|G|^{n+u}}}\\
  &=& \left[(1-\frac{1}{2})^2\right]^{\sum\limits_{G\in \mathcal{N}}\frac{m(S_\ell,n,H,G)}{|G|^{n+u}}},
\end{eqnarray*}
where the first inequality follows because $(1-\frac{1}{n})^n$ is an increasing sequence. Then we have
\begin{eqnarray*}
  &&\sum_{G\in \mathcal{N}} \frac{m(S_\ell,n,H,G)}{|G|^{n+u}}\\
  &=& \sum_{G\in \mathcal{N}} |G|^{-u}\left(\sum_{\substack{\text{isom. classes of }H\text{-extensions }(E,\pi) \\ \ker\pi \simeq G\\ \ker \pi  \text{ irred. $E$-group}\\ E \text{ is level }\ell}} \frac{|G|^{-n}\sum_{D\in \mathcal{E}_H, D\leq E} \nu(D,E)\frac{|D|}{|H|}^n}{|\Aut_H(E,\pi)|} \right) \\
  &\leq& \sum_{G \in \mathcal{N}} |G|^{-u} \# \left\{ \text{isom. classes of $H$-extensions } (E,\pi) \,\Bigg|\,
  \begin{aligned}
  & \ker\pi \simeq G \text{ is irred. $E$-group}\\
  & E \text{ is level }\ell
  \end{aligned}
\right\} \\
  &\leq& \sum_{G \in \mathcal{N}} |G|^{-u} \left(\sum_{(G,A)\in \CF(\bS_\ell)} |\Sur(\widetilde{H}, A/\Inn(G))| \right)\\
  &=& \sum_{\substack{(G,A)\in \CF(\bS_\ell)\\ G \text{ non-abelian}}} |G|^{-u} |\Sur(\widetilde{H}, A/\Inn(G))|.
\end{eqnarray*}
The first equality above is Corollary~\ref{C:finalmnon}. The first inequality follows from the fact that $|\Sur(\rho, \pi)|$ in Proposition~\ref{P:countfromF} is less than or equal to $|G|^n$. The second inequality follows by Lemma~\ref{L:nab-con}.
It shows that the non-abelian part also has a lower bound depending on $u, \ell$ and $\widetilde{H}$. By Corollary~\ref{C:CfpFin}, $\CF(\bS_\ell)$ is a finite set, so these lower bounds for abelian part and non-abelian parts are both non-zero. Then we proved the theorem.
\end{proof}

Now, we establish the inductive step that is crucial in the proof of Theorem~\ref{T:totall}.

\begin{lemma} \label{L:l-1tol}
Let $\ell>1$, $n\geq 1$, $u> -n$ be integers, and $\widetilde{H}$ be a finite level $\ell-1$ group. Then
$$\lim_{n\to \infty} \sum_{\substack{H \text{ is finite level }\ell\\ \text{s.t. }\widetilde{H}=H^{\bS_{\ell-1}}}} \mu_{u,n}(U_{S_\ell,H})=\sum_{\substack{H \text{ is finite level }\ell\\ \text{s.t. }\widetilde{H}=H^{\bS_{\ell-1}}}}\mu_u (U_{S_\ell,H}).$$
\end{lemma}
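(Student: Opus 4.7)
The plan is to identify the left hand side with $\mu_u(U_{S_{\ell-1},\widetilde H})$, apply Fatou's lemma for one inequality, and then use Lemma~\ref{L:P-Const} to establish a uniform tail estimate for the reverse.

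Since $X^{\bS_{\ell-1}}\simeq (X^{\bS_\ell})^{\bS_{\ell-1}}$ for every $X\in\mathcal{P}$, the basic open $U_{S_{\ell-1},\widetilde H}$ decomposes as the disjoint union of $U_{S_\ell,H}$ over level-$\ell$ groups $H$ with $H^{\bS_{\ell-1}}=\widetilde H$. Finite additivity of $\mu_{u,n}$ gives
\[
\sum_{\substack{H\text{ finite level }\ell\\ H^{\bS_{\ell-1}}=\widetilde H}}\mu_{u,n}(U_{S_\ell,H})=\mu_{u,n}(U_{S_{\ell-1},\widetilde H}),
\]
which converges to $\mu_u(U_{S_{\ell-1},\widetilde H})$ by Theorem~\ref{T:calc} applied with $S=S_{\ell-1}$, identifying the left hand side. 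Fatou's lemma applied to the counting measure on such $H$ then produces
\[
\sum_H\mu_u(U_{S_\ell,H})=\sum_H\lim_n\mu_{u,n}(U_{S_\ell,H})\le\liminf_n\sum_H\mu_{u,n}(U_{S_\ell,H})=\mu_u(U_{S_{\ell-1},\widetilde H}),
\]
so the right hand side is at most the left hand side.

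For the reverse, it suffices to exhibit, for every $\epsilon>0$, a finite set $T$ of such $H$ with $\sum_{H\notin T}\mu_{u,n}(U_{S_\ell,H})<\epsilon$ for all sufficiently large $n$; this yields $\sum_{H\in T}\mu_u(U_{S_\ell,H})=\lim_n\sum_{H\in T}\mu_{u,n}(U_{S_\ell,H})\ge\mu_u(U_{S_{\ell-1},\widetilde H})-\epsilon$, and $\epsilon\to0$ gives the desired inequality. To obtain this tail bound I would combine Equation~\eqref{E:fixedn} with the trivial estimate $|\Sur(\hat F_n,H)|\le|H|^n$ to deduce the uniform-in-$n$ bound $\mu_{u,n}(U_{S_\ell,H})\le g(H):=1/(|\Aut(H)||H|^u)$, a candidate dominating function. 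Lemma~\ref{L:P-Const} in the limit, together with $|\Sur(\hat F_n,H)|/|H|^n\to 1$, then forces $\mu_u(U_{S_\ell,H})\ge c(u,\ell,\widetilde H)\,g(H)$ whenever $\mu_u(U_{S_\ell,H})>0$ (since the dichotomy $P_{u,n}\in\{0\}\cup[c(u,\ell,\widetilde H),1]$ passes to the limit), so
\[
\sum_{H:\,\mu_u(U_{S_\ell,H})>0} g(H)\le c(u,\ell,\widetilde H)^{-1}\sum_H\mu_u(U_{S_\ell,H})\le c(u,\ell,\widetilde H)^{-1}.
\]
Dominated convergence thus handles the contribution from $\{H:\mu_u(U_{S_\ell,H})>0\}$.

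The main obstacle, and the heart of the proof, is the contribution of \emph{transient} groups $H$ with $\mu_u(U_{S_\ell,H})=0$ but $\mu_{u,n}(U_{S_\ell,H})>0$ for some $n$. For each such $H$, the dichotomy in Lemma~\ref{L:P-Const} combined with the fact that $\lim_n P_{u,n}(U_{S_\ell,H})=0$ forces $P_{u,n}(U_{S_\ell,H})=0$, and hence $\mu_{u,n}(U_{S_\ell,H})=0$, for all sufficiently large $n$; pointwise each such $H$ eventually stops contributing, but the threshold $N_H$ depends on $H$ and uniformity is required. I expect the uniform tail to follow by exploiting the structural bounds from Lemmas~\ref{L:ab-con} and \ref{L:nab-con} on the irreducible factors that can occur in the kernel $R$ of the fundamental short exact sequence---these depend only on $\widetilde H$ (not on $n$ or $H$)---together with the finiteness of each $\CF(\bS_\ell)$ from Corollary~\ref{C:CfpFin} and the finite support of each $\mu_{u,n}$ (as $(\hat F_n)^{\bS_\ell}$ is finite); this should restrict the transient $H$'s to a sufficiently controlled collection that their total contribution can be absorbed by enlarging $T$ by finitely many extensions of $\widetilde H$.
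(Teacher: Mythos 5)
Your overall strategy is the same as the paper's: reduce to a uniform tail bound via the dominating function $g(H)=1/(|\Aut(H)||H|^u)$ and the dichotomy of Lemma~\ref{L:P-Const}. But there is a genuine gap exactly where you flag one, and the route you sketch to close it does not work. You only prove summability of $g$ over $\{H:\mu_u(U_{S_\ell,H})>0\}$, and then hope to dispose of the ``transient'' $H$ (those with $\mu_u(U_{S_\ell,H})=0$ but $\mu_{u,n}(U_{S_\ell,H})>0$ for some $n$) by enlarging $T$ by \emph{finitely many} extensions of $\widetilde H$. That cannot work as stated: there are in general infinitely many finite level-$\ell$ groups $H$ with $H^{\bS_{\ell-1}}=\widetilde H$ (already for $\ell=2$, $\widetilde H=1$ one has all $(\Z/2\Z)^k$), and Lemmas~\ref{L:ab-con}, \ref{L:nab-con} and Corollary~\ref{C:CfpFin} bound the irreducible factors occurring in $R$ for a \emph{fixed} $H$, not the collection of $H$ that some $\mu_{u,n}$ charges. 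So the ``I expect / should restrict'' step is not a proof.

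The fix is to run your Lemma~\ref{L:P-Const} comparison at \emph{finite} $n$ rather than in the limit, which makes $g$ summable over all $H$ that are ever charged and eliminates the transient case entirely. Let $i(H)$ be such that $|\Sur(\hat F_n,H)|/|H|^n\geq 1/2$ for $n>i(H)$. For $n>i(H)$, either $\mu_{u,n}(U_{S_\ell,H})=0$ or $\mu_{u,n}(U_{S_\ell,H})\geq \tfrac12 c(u,\ell,\widetilde H)\,g(H)$. Hence for each fixed $n$,
$$\sum_{\substack{H \text{ charged by some } \mu_{u,\cdot}\\ i(H)<n}} g(H)\ \leq\ \frac{2}{c(u,\ell,\widetilde H)}\sum_{H}\mu_{u,n}(U_{S_\ell,H})\ \leq\ \frac{2}{c(u,\ell,\widetilde H)},$$
and letting $n\to\infty$ shows $\sum g(H)<\infty$ over the full set of $H$ with $\mu_{u,n}(U_{S_\ell,H})\neq 0$ for some $n$. (This uses that once $H$ is charged for some $n$ it is charged for all larger $n$, which follows from the presentation argument in the proof of Proposition~\ref{P:prob0}: adding a generator and a trivializing relation.) Dominated convergence then applies directly; no separate analysis of transient groups is needed. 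Your first two steps (the decomposition of $U_{S_{\ell-1},\widetilde H}$ and the Fatou inequality) are correct but also unnecessary once DCT is available, since DCT gives the interchange of limit and sum in one stroke.
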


\begin{proof}

Assume $H$ is finite and level $\ell$ such that $\widetilde{H}=H^{\bS_{\ell-1}}$. 
Let $i(H)$ be the smallest integer such that $|\Sur(\hat{F}_n, H)|/|H|^n\geq \frac{1}{2}$ for all $n>i(H)$ (note that $i(H)$ is finite since $\lim_{n\to \infty}|\Sur(\hat{F}_n, H)|/|H|^n =1$).
Then either $\mu_{u,n}(U_{S_\ell,H})=0$ or
\begin{eqnarray*}
  \mu_{u,n}(U_{S_\ell,H}) &=& \frac{|\Sur(\hat{F}_n,H)|}{|\Aut(H)||H|^{n+u}} P_{u,n}(U_{S_\ell,H})\\
  &\geq& \frac{1}{2}c(u, \ell, \widetilde{H}) \frac{1}{|\Aut(H)||H|^u}
\end{eqnarray*}
for $n>i(H)$, by Lemma~\ref{L:P-Const}. 
We call $H$ \emph{achievable} if it is finite level $\ell$ and there exists $n$ such that $\mu_{u,n}(U_{S_\ell, H})\neq 0$ (we will give an equivalent definition in Section~\ref{S:prob0}).
The function $\mu_{u,n}(U_{S_\ell,H})$ of $H$ is dominated by the function of $H$ that is $\frac{1}{|\Aut(H)||H|^u}$ when $H$ is achievable and $0$ otherwise.
We will next show that the sum of this dominating function converges, in order to use Lebesgue's Dominated Convergence Theorem.
We have
\begin{eqnarray*}
\sum_{\substack{H \text{ is achievable} \\ \text{ s.t. } \widetilde{H}\simeq H^{\bS_{\ell-1}}}} \frac{1}{|\Aut(H)||H|^u}
  &=& \lim_{n\to\infty}\sum_{\substack{H \text{ is achievable} \\ \text{ s.t. } \widetilde{H}\simeq H^{\bS_{\ell-1}}\\ \text{and } i(H)<n}} \frac{1}{|\Aut(H)||H|^u}\\ 
  &\leq& \lim_{n\to \infty}\frac{2}{c(u,\ell, \widetilde{H})} \sum_{\substack{H \text{ is achievable} \\ \text{ s.t. } \widetilde{H}\simeq H^{\bS_{\ell-1}}\\ \text{and } i(H)<n}} \mu_{u,n}(U_{S_\ell,H})\\
  &\leq& \frac{2}{c(u,\ell, \widetilde{H})},
\end{eqnarray*}
where the first equality expresses the implicit infinite sum as an explicit limit.
Thus by Lebesgue's Dominated Convergence Theorem, 
\begin{eqnarray*}
 \lim_{n\to \infty} \sum_{\substack{H \text{ is finite level }\ell \\ \text{ s.t. } \widetilde{H}\simeq H^{\bS_{\ell-1}}}} \mu_{u,n}(U_{S_\ell,H})
 &=&  \sum_{\substack{H \text{ is finite level }\ell \\ \text{ s.t. } \widetilde{H}\simeq H^{\bS_{\ell-1}}}} \lim_{n\to \infty} \mu_{u,n}(U_{S_\ell,H}),
\end{eqnarray*}
which completes the lemma.
\end{proof}

\begin{proof}[Proof of Theorem \ref{T:totall}]
We proceed by induction on $\ell$. When $\ell=1$, note that the trivial group is the only group that is finite level 1 and it's obvious that $\mu_u(U_{S_1,1})=1$. Assume the theorem is true for $\ell-1$, i.e. $$\sum\limits_{\widetilde{H} \text{ is finite level }\ell-1} \mu_u(U_{S_{\ell-1},\widetilde{H}})=1.$$ 
 We see that for any finite level $\ell-1$ group $\widetilde{H}$
 \begin{eqnarray*}
   \mu_u(U_{S_{\ell-1}, \widetilde{H}}) &=& \lim_{n\to \infty} \mu_{u,n}(U_{S_{\ell-1},\widetilde{H}}) \\
   &=& \lim_{n\to \infty} \sum_{\substack{H \text{ is finite level }\ell \\ \text{s.t. }\widetilde{H}=H^{\bS_{\ell-1}}}} \mu_{u,n}(U_{S_\ell,H})\\
   &=& \sum_{\substack{H \text{ is finite level }\ell \\ \text{s.t. }\widetilde{H}=H^{\bS_{\ell-1}}}} \mu_u(U_{S_\ell,H}),
 \end{eqnarray*}
 where the second equality above follows from the definition of $\mu_{u,n}$ on basic open sets and the last step follows from Lemma~\ref{L:l-1tol}.
 Therefore, we finish the proof by
 \begin{eqnarray*}
   \sum_{H \text{ is finite level }\ell} \mu_u(U_{S_\ell,H}) &=& \sum_{\widetilde{H} \text{ is finite level }\ell-1} \sum_{\substack{H \text{ is finite level }\ell \\ \text{s.t. }\widetilde{H}=H^{\bS_{\ell-1}}}} \mu_u(U_{S_\ell,H})\\
   &=& \sum_{\widetilde{H} \text{ is finite level }\ell-1} \mu_u(U_{S_{\ell-1},\widetilde{H}})\\
   &=& 1.
 \end{eqnarray*}
\end{proof}

\subsection{Proof of Theorem~\ref{T:countadd}}\label{SS:countadd}

We will use the following corollary of Theorem~\ref{T:totall}.
\begin{corollary}\label{C:addB}
Let $\ell$ be a positive integer, and $B=\cup_{j=1}^{\infty} U_{S_\ell,H_j}$ for some finite groups $H_j$ such that
$U_{S_\ell,H_j}\ne U_{S_\ell,H_{j'}}$ for $j\ne j'$ (note that $U_{S_\ell, H_{j}}\neq U_{S_{\ell},H_{j'}}$ implies $U_{S_\ell, H_{j}}\cap U_{S_{\ell},H_{j'}}=\emptyset$).  Suppose that $B\in \mathcal{A}$, the algebra of sets generated by the basic open sets $U_{S,G}$ for a finite set $S$ of finite groups and a finite level $S$ group $G$.
Let $u$ be an integer.
Then $\mu_u(B)=\sum_{j=1}^\infty \mu_u(U_{S_\ell,H_j})$.
\end{corollary}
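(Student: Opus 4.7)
The plan is to reduce countable additivity on this very special union to the finite additivity of $\mu_u$ on $\mathcal{A}$ together with the ``no escape of mass'' statement Theorem~\ref{T:totall}. First I would observe that the family $\{U_{S_\ell,H} : H \text{ finite level } \ell\}$ partitions $\mathcal{P}$ into countably many pairwise disjoint basic clopen sets, because every $X\in\mathcal{P}$ has a unique (isomorphism class of) pro-$\bS_\ell$ completion, and that completion is in $\bS_\ell$, hence finite and level $\ell$. After replacing each $H_j$ by $H_j^{\bS_\ell}$ (which does not change $U_{S_\ell,H_j}$), the hypothesis $U_{S_\ell,H_j}\ne U_{S_\ell,H_{j'}}$ for $j\ne j'$ says the $H_j$ are pairwise non-isomorphic finite level $\ell$ groups, so $B=\bigsqcup_j U_{S_\ell,H_j}$ is a genuinely disjoint union from our partition.

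Next I would use finite additivity. Set $B_k:=\bigsqcup_{j=1}^k U_{S_\ell,H_j}\in \mathcal{A}$. Since $\mu_u$ on $\mathcal{A}$ is the pointwise limit of the probability measures $\mu_{u,n}$, it is non-negative and finitely additive on $\mathcal{A}$: both properties pass through the limit because they involve only finitely many operations. In particular $\mu_u(B_k)=\sum_{j=1}^k\mu_u(U_{S_\ell,H_j})$, and since $B\in\mathcal{A}$ by hypothesis we also have $B\setminus B_k\in\mathcal{A}$ and
\[\mu_u(B)=\mu_u(B_k)+\mu_u(B\setminus B_k).\]
It remains only to show that $\mu_u(B\setminus B_k)\to 0$ as $k\to\infty$.

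To bound the tail, let $C_k:=\mathcal{P}\setminus B_k$; this lies in $\mathcal{A}$, and the same partition shows that $C_k$ is the disjoint union of the $U_{S_\ell,H}$ with $H$ finite level $\ell$ and $H\notin\{H_1,\dots,H_k\}$. By finite additivity and Theorem~\ref{T:totall},
\[\mu_u(C_k)=1-\mu_u(B_k)=\sum_{H\text{ finite level }\ell}\mu_u(U_{S_\ell,H})-\sum_{j=1}^k\mu_u(U_{S_\ell,H_j}),\]
which is the $k$-tail of a convergent series of non-negative terms and therefore tends to $0$. Since $B\setminus B_k\subseteq C_k$ and $\mu_u$ is finitely additive and non-negative on $\mathcal{A}$, monotonicity gives $\mu_u(B\setminus B_k)\le\mu_u(C_k)\to 0$. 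Passing to the limit in the displayed identity for $\mu_u(B)$ yields $\mu_u(B)=\sum_{j=1}^\infty\mu_u(U_{S_\ell,H_j})$.

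The main obstacle is really located upstream: the whole argument is almost a one-line consequence of \emph{some} tail bound, but the only such bound available is exactly Theorem~\ref{T:totall}, whose proof is the substantive part and which is what rules out escape of mass in the limit $n\to\infty$. Once Theorem~\ref{T:totall} is granted, the only things used here are that $\mathcal{A}$ is an algebra closed under the operations above and that $\mu_u$ inherits finite additivity and non-negativity from the probability measures $\mu_{u,n}$.
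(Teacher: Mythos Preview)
Your proof is correct and follows essentially the same approach as the paper's. Both arguments use finite additivity of $\mu_u$ on $\mathcal{A}$ (inherited from the $\mu_{u,n}$) together with Theorem~\ref{T:totall} to control the tail; the paper packages this as a direct squeeze $\sum_{j=1}^M \mu_u(U_{S_\ell,H_j}) \le \mu_u(B) \le 1-\sum_{j=1}^M \mu_u(U_{S_\ell,G_j})$ over the complementary level-$\ell$ groups $G_j$, while you instead split $\mu_u(B)=\mu_u(B_k)+\mu_u(B\setminus B_k)$ and bound the remainder by $\mu_u(\mathcal{P}\setminus B_k)$, but the content is the same.
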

\begin{proof}
Since $\mu_u$ is defined as a limit of measures $\mu_{u,n}$, it is immediate that $\mu_u$ is finitely additive because finite sums can be exchanged with the limit.
Let $G_j$ be the level $\ell$ finite groups not among the $H_j$.  Then for every positive integer $M$, we have
$$
\sum_{j=1}^M \mu_u(U_{S_\ell,H_j}) \leq \mu_u(B) \leq 1-\sum_{j=1}^M \mu_u(U_{S_{\ell},G_j}).
$$
Taking limits as $M\ra\infty$ gives
$$
\sum_{j=1}^\infty \mu_u(U_{S_\ell,H_j}) \leq \mu_u(B) \leq 1-\sum_{j=1}^\infty \mu_u(U_{S_\ell,G_j})=\sum_{j=1}^\infty \mu_u(U_{S_\ell,H_j}),
$$
where the last equality is by Theorem~\ref{T:totall}.
\end{proof}

\begin{proof}[Proof of Theorem~\ref{T:countadd}]
 If we have disjoint sets $A_n \in\mathcal{A}$ with $A=\cup_{n\geq 1 } A_n \in \mathcal{A}$, by taking $B_n=A\setminus \cup_{j=1}^n A_j$, it suffices to show that for $B_1 \supset B_2 \supset \dots$ (with $B_n\in \mathcal{A}$) with $\cap_{n\geq 1} B_n=\emptyset$ we have $\lim_{n\ra\infty} \mu_u(B_n)=0$.

We can assume, without loss of generality, that 
 for each $\ell\geq 1$, we have $B_\ell=\cup_{j} U_{S_\ell,G_{\ell,j}}$ (i.e. $B_\ell$ is defined at level $\ell$).
(Note that when all groups in $S$ have order at most $m$ that $U_{S,H}$ is a union of sets of the form $U_{S_m,G}$ for varying $G$.  We can always insert redundant $B_i$'s if the level required to define the $B_\ell$ increase quickly.) 
We will show by contradiction that $\lim_{\ell \ra \infty} \mu_u(B_\ell)=0$.

Suppose, instead that there is an $\epsilon>0$ such that for all $\ell$, we have $\mu_u(B_\ell)\geq \epsilon$.
It follows from Corollary~\ref{C:addB}  that for each $\ell$ we have a subset $K_\ell\sub B_\ell$ such that $\mu_u(B_\ell\setminus K_\ell)<\epsilon/2^{\ell+1}$ and $K_\ell$ is a \emph{finite} union of $U_{S_\ell,G_{\ell,j}}$. 

Next, let $C_\ell=\cap_{j=1}^\ell K_j$.    Then 
$\mu_u(B_\ell\setminus C_\ell)<\epsilon/2$, since
\begin{align*}
\mu_u(B_\ell\setminus C_\ell)=& \mu_u(B_\ell\setminus K_\ell)+\mu_u(K_\ell\setminus K_\ell \cap K_{\ell-1})+\cdots 
+\mu_u(K_\ell \cap \cdots \cap K_2 \setminus K_\ell \cap \cdots \cap K_1) \\
< &\epsilon/2^{\ell+1} + \mu_u(B_{\ell-1} \setminus K_{\ell-1}) + \cdots   +\mu_u(B_1\setminus K_{1}) \\
< &\epsilon/2^{\ell+1} + \epsilon/2^{\ell} + \cdots   +\epsilon/2^{2}.
\end{align*}
So $\mu_u(C_\ell)\geq \epsilon/2$ for each $\ell$ and in particular it is non-empty.  Note $C_{\ell+1}\sub C_\ell$ for all $\ell$.  Pick $x_\ell\in C_\ell$ for all $\ell$.

Note $C_\ell$ is defined at level $\ell$ and a finite union of the basic open sets $U_{S_\ell,G_{\ell,j}}$.
Pick an $H_1$ so that infinitely many of the $x_\ell$ are in $U_{S_1,H_1}$ (this is possible since all $x_\ell$ are in $C_1$ and there are only finitely many $U_{S_1,H}$ that make up $C_1$), and then disregard the $x_\ell$ that are not in $U_{S_1,H_1}$.  In particular note $U_{S_1,H_1}\subset C_1$. Then pick $H_2$ so that infinitely many of the remaining $x_\ell$ are in $U_{S_2,H_2}$, and disregard the $x_\ell$ that are not. Since all of the remaining $x_\ell$ are in $U_{S_1,H_1}$, we have $U_{S_2,H_2}\sub U_{S_1,H_1}$ and hence $H_1$ is a quotient of $H_2$.  Also note $U_{S_2,H_2}\sub C_2$.  We continue this process and then consider the profinite group $H$ that is the inverse limit of the $H_i$'s.  Since $H\in U_{S_\ell,H_\ell} \sub C_\ell \sub B_\ell$ for all $\ell$, we have a point $H\in \cap_{\ell\geq 1} B_\ell $ which is a contradiction.  
 
\end{proof}

\section{Proof of Theorem~\ref{T:Main}}\label{S:mainproof}
The last section established the existence of the probability measure $\mu_u$ on Borel sets of $\mathcal{P}$.
Now we are able to give the proof of Theorem~\ref{T:Main}, the weak convergence of the $\mu_{u,n}$ to $\mu_u$.

\begin{proof}[Proof of Theorem~\ref{T:Main}]

%Theorem~\ref{T:countadd} shows that $\mu_u$ is a pre-measure on the algebra $\mathcal{A}$ of sets generated by basic open sets $U_{S,H}%
%$. Then by the Carath\'eodory's extension theorem, $\mu_u$ can be extended to a measure (which will also be denoted by $\mu_u$) on the $
%\sigma$-algebra generated by $\mathcal{A}$, i.e. the $\sigma$-algebra of Borel sets of $\mathcal{P}$.

Note that the weak convergence $\mu_{u,n} \Rightarrow \mu_u$ is equivalent to that $$\liminf_{n\to\infty} \mu_{u,n} (U) \geq \mu_u(U)$$ for all open sets $U$. In the topological space $\mathcal{P}$, every open set is a countable disjoint union of basic open sets, since two open basic open sets having nontrivial intersection implies that one basic open set contains the other. Assume $U=\cup_{i\geq 1} U_i$ is an open set, where $U_i$ are disjoint basic open sets. By Fatou's lemma, we have
 $$\mu_u(U) = \sum_{i\geq 1} \mu_u(U_i)
 = \sum_{i\geq 1} \lim_{n\to \infty} \mu_{u,n} (U_i)
 \leq \liminf_{n\to \infty }\sum_{i\geq 1} \mu_{u,n}(U_i) 
 = \liminf_{n\to \infty} \mu_{u,n}(U).$$
\end{proof}

\section{For arbitrary set $S$} \label{S:arbitraryS}

In this section, we let $S$ be an arbitrary (not necessarily finite) set of finite groups and consider the value of $\mu_u$ on the specific type of Borel sets 
$$V_{S,H}:=\{X \in\mathcal{P} \mid X^{\bS}\simeq H\}$$
for a finite level $S$ group $H$. We will first prove an analogue of Theorem~\ref{T:calc} for an arbitrary set $S$ (see Theorem~\ref{T:calcinf}), the proof of which shows that Equation~\eqref{E:lim} gives the value $\mu_{u}(V_{S,H})$.

Note that $V_{S,H}$ is not a basic open set, but is the intersection of a sequence of basic open sets. Since we will approximate $S$ by increasing finite subsets, we need the following lemma.

\begin{lemma}\label{L:mincinS}
Consider two sets $T\sub T'$ of finite groups.  For any positive integer $n$, finite group $H$ of level $T$, and $G\in \mathcal{A}_H\cup \mathcal{N}$, we have
$m(T,n,H,G)\leq m(T',n,H,G).$  Also if $T_1\sub T_2\sub \cdots$ are finite sets of finite groups, then $m(T_m,n,H,G)$ eventually stabilizes as $m\ra\infty$.
\end{lemma}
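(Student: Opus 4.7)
The plan is to first construct natural surjections $F' \twoheadrightarrow F$ and $R' \twoheadrightarrow R$ respecting the projections to $H$, where $(F', R')$ and $(F, R)$ arise from the fundamental short exact sequences for $T'$ and $T$ respectively. Since $\bar{T} \subseteq \overline{T'}$, there is a canonical surjection $(\hat{F}_n)^{\overline{T'}} \twoheadrightarrow (\hat{F}_n)^{\bar{T}}$; I will choose surjections $(\hat{F}_n)^{\overline{T'}} \twoheadrightarrow H$ and $(\hat{F}_n)^{\bar{T}} \twoheadrightarrow H$ compatibly so that the respective kernels satisfy $N' \twoheadrightarrow N$. The main obstacle is to verify that $M'$, the intersection of maximal proper $(\hat{F}_n)^{\overline{T'}}$-normal subgroups of $N'$, lies in the preimage of $M$. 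The key point is that for each maximal proper $F$-normal subgroup $M_\beta \subset R$, its pullback to $N'$ is a maximal proper $(\hat{F}_n)^{\overline{T'}}$-normal subgroup (the quotient is an irreducible $F$-group, hence irreducible as an $(\hat{F}_n)^{\overline{T'}}$-group since the action factors through $F' \to F$). Intersecting over all $\beta$ gives the containment, and hence the surjection $F' \twoheadrightarrow F$.

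With this surjection in hand, the monotonicity follows quickly. For any $F$-irreducible group $G_i$ (viewed as an $F'$-group through $F' \to F$), pre-composition with $R' \twoheadrightarrow R$ gives an injection $\Sur_F(R, G_i) \hookrightarrow \Sur_{F'}(R', G_i)$, and $\Aut_F(G_i) = \Aut_{F'}(G_i)$ because the $F'$-action factors through $F$. For non-abelian $G$, applying Theorem~\ref{T:countRsub} termwise over pairwise non-isomorphic irreducible structures on $G$ (distinct $F$-structures remain distinct as $F'$-structures, since $F'$ acts through $F$) gives $m(T, n, H, G) \leq m(T', n, H, G)$. For abelian $G \in \mathcal{A}_H$, the $H$-group structure on $G$ is fixed and $h_F(G) = h_{F'}(G) = h_H(G) \geq 2$, so the injection combined with $|\Sur_F(R, G)| = h_H(G)^{m(T,n,H,G)} - 1$ from Theorem~\ref{T:countRsub} gives the inequality after taking logarithms.

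For the stabilization claim with $T_1 \subseteq T_2 \subseteq \cdots$, I will use the explicit formulas in Corollaries~\ref{C:finalmab} and \ref{C:finalmnon}, which express $m(T_m, n, H, G)$ as a sum over isomorphism classes of $H$-extensions $(E, \pi)$ with $\ker \pi$ of prescribed structure (as an $H$-group in the abelian case, as a group together with $E$-irreducibility in the non-abelian case) and $E$ of level $T_m$. Since $|E| = |G| \cdot |H|$ is fixed, there are only finitely many isomorphism classes of such $H$-extensions. For each candidate $(E, \pi)$, the condition ``$E$ is level $T_m$'' is monotone in $m$: once $E \in \overline{T_m}$, then $E \in \overline{T_{m'}}$ for all $m' \geq m$. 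Hence each summand is eventually constant, so the finite sum stabilizes, giving the stabilization of $m(T_m, n, H, G)$.
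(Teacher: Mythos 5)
Your proof is correct, but the monotonicity half takes a genuinely different route from the paper's. You build the comparison at the level of the fundamental short exact sequences: compatible surjections to $H$ give $N'\twoheadrightarrow N$, pulling back each maximal proper normal subgroup shows $M'$ lands in the preimage of $M$, and the resulting $F'$-equivariant surjection $R'\twoheadrightarrow R$ injects $\Sur_F(R,G)$ into $\Sur_{F'}(R',G)$, from which Theorem~\ref{T:countRsub} extracts the inequality of multiplicities (using $h_H(G)\geq 2$ in the abelian case and the preservation of distinctness of irreducible structures in the non-abelian case). The paper instead gets both claims from a single computation: Corollaries~\ref{C:finalmab} and \ref{C:finalmnon} together with Proposition~\ref{P:countfromF} express $(h_H(G)^{m(T,n,H,G)}-1)/(h_H(G)-1)$, resp.\ $m(T,n,H,G)$, as a sum of the non-negative terms $|\Sur(\rho,\pi)|/|\Aut_H(E,\pi)|$ over the finitely many isomorphism classes of $H$-extensions with prescribed kernel, where the only dependence on the set of groups is through the monotone condition that $E$ be level $T$; monotonicity and stabilization are then both immediate. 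Your stabilization argument is exactly this second observation, so the two proofs coincide on that half. What your route buys is the explicit surjection $R_{T'}\twoheadrightarrow R_{T}$ compatible with the decompositions into irreducibles---more structural information than the lemma asks for---at the cost of the extra verifications (maximality of the pulled-back subgroups, that the relevant irreducibility and automorphism groups are unchanged when the action is pulled back along the surjection $F'\ra F$), all of which you handle correctly. The paper's route is shorter once Proposition~\ref{P:countfromF} is in hand and treats the two halves of the lemma uniformly.
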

\begin{proof}
Consider the case when $G$ is abelian.  
Let $\rho: \hF^{\bar{T}_m} \ra H$ be a surjection.
Corollary~\ref{C:finalmab} and Proposition~\ref{P:countfromF} give
$$
\frac{h_H(G)^{m(T_m,n,H,G)}-1}{h_H(G)-1} =\sum_{\substack{\textrm{isom. classes of $H$-extensions $(E,\pi)$}\\ \textrm{$\ker \pi \simeq G$ as an $H$-group} \\
\textrm{$E$ is level $T_m$} }}
\frac{|\Sur(\rho,\pi)|}{|\Aut_H(E,\pi)|}.
$$
The right-hand side is clearly non-decreasing in $m$.  There are only finitely many isomorphism classes of $H$-extensions whose kernel is isomorphic to $G$, which proves the stabilization.  The case of non-abelian $G$ is similar.
\end{proof}

\begin{definition}
Let $S$ be a set of finite groups, $n$ a positive integer, and $H$ a finite level $S$ group. Let $T_1\subset T_2 \subset \cdots$ be finite sets of finite groups such that $\cup_{m\geq 1}T_m=S$. For any $G\in \mathcal{A}_H \cup \mathcal{N}$, we define 
$m(S,n,H,G)= \lim_{m\to \infty} m(T_m, n, H, G).$
\end{definition}

\begin{remark}
It's clear that $m(S,n,H,G)$ does not depend on the choice of the increasing sequence $T_i$, and $m(S,n,H,G)$ is always a non-negative integer.
\end{remark}

It is actually easier to determine $\mu_u(V_{S,H})$, as we will in the next lemma, than to 
find $\lim_{n\ra\infty} \mu_{u,n}(V_{S,H}),$ which we will do in Theorem~\ref{T:calcinf}.

\begin{lemma}\label{L:wrongorder}
Let $S$ be a set of finite groups.
Let $T_1\sub T_2\sub \cdots$ be finite sets of finite groups such that $\cup_{m\geq 1} T_m=S$.
Let $H$ be a finite group of level $S$. Let $u$ be an integer.
Then
\begin{eqnarray*}
 \mu_u(V_{S,H})
&=&\lim_{m\ra\infty} \lim_{n\ra\infty} \Prob((X_{u,n})^{\bT_m}\isom H)\\
&=&\frac{1}{|\Aut(H)||H|^{u}}\prod_{\substack{G \in \mathcal{A}_H}} \prod_{i=1}^{\infty} (1-\lambda(S,H,G)
\frac{h_H(G)^{-i}}{ |G|^{{u}}})
  \prod_{\substack{ G\in \mathcal{N}}}  e^{-|G|^{-u}\lambda(S,H,G)}.
\end{eqnarray*}
\end{lemma}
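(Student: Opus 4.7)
The plan is to realize $V_{S,H}$ as a nested intersection of basic open sets, apply Equation~\eqref{E:lim} of Theorem~\ref{T:calc} on each, and then pass to the limit in $m$.

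First, I would verify that
$$V_{S,H} = \bigcap_{m \geq 1} U_{T_m,\, H^{\bT_m}}.$$
The inclusion $\subseteq$ is immediate, since $H^{\bT_m}$ is a quotient of $H^{\bS}=H$. For the reverse inclusion, given $X\in\mathcal{P}$ with $X^{\bT_m}\simeq H^{\bT_m}$ for every $m$, the inverse limit of the nonempty finite sets $\Isom(X^{\bT_m},H^{\bT_m})$ under the natural restriction maps is nonempty, and any element assembles into an isomorphism $X^{\bS}\simeq H^{\bS}=H$. The sets $U_{T_m,H^{\bT_m}}$ form a decreasing sequence of Borel sets, so continuity from above for the probability measure $\mu_u$ (which exists by Theorem~\ref{T:countadd}) gives
$$\mu_u(V_{S,H}) \;=\; \lim_{m\to\infty} \mu_u\bigl(U_{T_m,H^{\bT_m}}\bigr).$$
Since $H$ is finite and of level $S$, only finitely many relations (coming from finitely many groups in $S$) are needed to present $H$, so $H\in\bT_{m_0}$ for some $m_0$. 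For $m\geq m_0$ one has $H^{\bT_m}=H$, so it suffices to compute $\lim_{m\to\infty}\mu_u(U_{T_m,H})$.

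Second, Equation~\eqref{E:lim} of Theorem~\ref{T:calc}, applied to the finite set $T_m$ and the level $T_m$ group $H$, gives for $m\geq m_0$
$$\mu_u(U_{T_m,H}) \;=\; \lim_{n\to\infty} \Prob\bigl((X_{u,n})^{\bT_m}\simeq H\bigr) \;=\; \frac{1}{|\Aut(H)||H|^u} \prod_{G\in \mathcal{A}_H} P^{(m)}_G \prod_{G\in\mathcal{N}} Q^{(m)}_G,$$
where
$$P^{(m)}_G := \prod_{i=1}^{\infty}\Bigl(1-\lambda(T_m,H,G)\frac{h_H(G)^{-i}}{|G|^u}\Bigr), \qquad Q^{(m)}_G := \exp\!\bigl(-|G|^{-u}\lambda(T_m,H,G)\bigr).$$
This already produces the first equality of the lemma.

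Third, I would take $m\to\infty$. Lemma~\ref{L:mincinS} together with Equation~\eqref{E:relmandlam} shows that $\lambda(T_m,H,G)$ is non-decreasing in $m$. Because only finitely many isomorphism classes of $H$-extensions with the prescribed kernel exist, and every such (finite) extension lies in $\bT_m$ once $m$ is large enough, one has $\lambda(T_m,H,G)\to\lambda(S,H,G)$, in fact stabilizing for $m$ sufficiently large (depending on $G$). Each factor $P^{(m)}_G$, $Q^{(m)}_G$ then lies in $[0,1]$ and is non-increasing in $m$, with pointwise limit the corresponding factor at $\lambda(S,H,G)$. A monotone convergence argument for infinite products (the inequality $\lim_m \prod_G (\cdots) \geq \prod_G \lim_m (\cdots)$ is immediate from $P^{(m)}_G\geq \lim_m P^{(m)}_G$; the reverse inequality follows by truncating to any finite subset of $G$'s, letting $m\to\infty$, and then enlarging the finite subset) identifies $\lim_m \mu_u(U_{T_m,H})$ with the product on the right-hand side of the lemma. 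The principal obstacle is precisely this last exchange of limit and infinite product, since for infinite $S$ the product may acquire infinitely many non-trivial factors only in the limit; the uniform monotonicity in $m$ of each factor is what makes the argument go through.
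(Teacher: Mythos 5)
Your proposal is correct and follows essentially the same route as the paper's proof: writing $V_{S,H}=\bigcap_m U_{T_m,H^{\bT_m}}$, using continuity of the measure, applying Equation~\eqref{E:lim} for each finite $T_m$, and exchanging $\lim_{m\to\infty}$ with the infinite product over $G$ via the monotonicity of $\lambda(T_m,H,G)$ in $m$ and the fact that each factor lies in $[0,1]$ and is non-increasing. Your explicit truncation argument for the limit–product exchange and your verification of the intersection identity are slightly more detailed than the paper's, but the substance is identical.
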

\begin{proof}
First of all, since $\mu_u$ is a measure and the sequence $U_{T_m, H^{T_m}}$ is descending, we have $$\mu_u(V_{S,H})=\mu_u( \cap_{m\geq 1} U_{T_m,H^{\bT_m}})=\lim_{m\to \infty} \mu_u (U_{T_m, H^{\bT_m}}) = \lim_{m\to \infty} \lim_{n \to \infty} \Prob((X_{u,n})^{\bT_m} \simeq H).$$
By definition, we have that $\lambda(T,H,G)$ is non-decreasing in $T$, i.e. if $T\sub T'$ then
$\lambda(T,H,G)\leq \lambda(T',H,G)$.  Further, again by definition, we have
$$
\lambda(S,H,G) =\lim_{m\ra\infty} \lambda(T_m,H,G).
$$

When $m$ is sufficiently large such that $H$ is level $T_m$, we have
\begin{eqnarray*}
&&\lim_{n\ra\infty} \Prob((X_{u,n})^{\bT_m}\isom H)\\
&=& \frac{1}{|\Aut(H)||H|^{u}}\prod_{\substack{G \in \mathcal{A}_H}} 
\prod_{i=1}^{\infty} (1-\lambda(T_m,H,G)
\frac{h_H(G)^{-i}}{ |G|^{{u}}})
 \prod_{\substack{ G\in \mathcal{N}}}  e^{-|G|^{-u}\lambda(T_m,H,G)}
\end{eqnarray*}
by Equation~\eqref{E:lim} since $T_m$ is finite. For each $G\in \mathcal{A}_H$, the factor
$$
\prod_{i=1}^{\infty} (1-\lambda(T_m,H,G)
\frac{h_H(G)^{-i}}{ |G|^{{u}}})
$$
is a limit of terms
$$
\prod_{i=1}^{m(T_m,n,H,G)} (1-\frac{h_H(G)^{m(T_m,n,H,G)} h_H(G)^{-i}}{ |G|^{{n+u}}})
$$
by Lemma~\ref{L:seq},
each of which is a probability (see Corollary~\ref{C:abprob}) and hence in the interval $[0,1]$.
Since the factors
$$
\prod_{i=1}^{\infty} (1-\lambda(T_m,H,G)
\frac{h_H(G)^{-i}}{ |G|^{{u}}}) 
\quad \quad \textrm{and} \quad \quad e^{-|G|^{-u}\lambda(T_m,H,G)}
$$
are all in $[0,1]$ and are non-increasing in $m$, we have the second equality in the following
\begin{eqnarray*}
& &\lim_{m\ra\infty}\lim_{n\ra\infty} \Prob((X_{u,n})^{\bT_m}\isom H)\\
&=& \lim_{m\ra\infty} \frac{1}{|\Aut(H)||H|^{u}}\prod_{\substack{G \in \mathcal{A}_H}} 
\prod_{i=1}^{\infty} (1-\lambda(T_m,H,G)
\frac{h_H(G)^{-i}}{ |G|^{{u}}})
  \prod_{\substack{ G\in \mathcal{N}}}  e^{-|G|^{-u}\lambda(T_m,H,G)}\\
 &=& \frac{1}{|\Aut(H)||H|^{u}}\prod_{\substack{G \in \mathcal{A}_H}} \lim_{m\ra\infty}
\prod_{i=1}^{\infty} (1-\lambda(T_m,H,G)
\frac{h_H(G)^{-i}}{ |G|^{{u}}})
  \prod_{\substack{ G\in \mathcal{N}}} \lim_{m\ra\infty} e^{-|G|^{-u}\lambda(T_m,H,G)}\\
  &=& \frac{1}{|\Aut(H)||H|^{u}}\prod_{\substack{G \in \mathcal{A}_H}} 
\prod_{i=1}^{\infty} (1-\lambda(S,H,G)
\frac{h_H(G)^{-i}}{ |G|^{{u}}})
  \prod_{\substack{ G\in \mathcal{N}}}  e^{-|G|^{-u}\lambda(S,H,G)}.
\end{eqnarray*}
The last equality uses the continuity from Lemma~\ref{L:seq}.
\end{proof}

\begin{theorem}\label{T:calcinf}
The statement in Theorem~\ref{T:calc} also works for an arbitrary set $S$ of finite groups.
\end{theorem}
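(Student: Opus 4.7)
The plan is to deduce Theorem~\ref{T:calcinf} from Theorem~\ref{T:calc} and Lemma~\ref{L:wrongorder} by approximating $S$ from below by a nested sequence $T_1\subset T_2\subset\cdots$ of finite subsets with $\bigcup_m T_m=S$. Fix $m_0$ large enough that $H$ is level $T_m$ for every $m\geq m_0$, so that $V_{S,H}=\bigcap_{m\geq m_0}U_{T_m,H}$. By continuity of measure, $\mu_{u,n}(V_{S,H})=\lim_{m\to\infty}\mu_{u,n}(U_{T_m,H})$, and each term on the right is given by Equation~\eqref{E:fixedn} applied to the finite set $T_m$. Since by Lemma~\ref{L:mincinS} every multiplicity $m(T_m,n,H,G)$ is non-decreasing in $m$ and eventually stabilizes at $m(S,n,H,G)$, every factor of the product formula converges monotonically; this yields the arbitrary-$S$ version of Equation~\eqref{E:fixedn}, with the outer product interpreted as this monotone limit. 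The final assertion of Theorem~\ref{T:calc} transfers immediately because the conditions on $(\Gamma,j)$ depend only on the closure of $S$ under subgroups and quotients and on $|H|$, so Corollary~\ref{C:nonewsimple} and Lemma~\ref{L:HfactR} force $m(T_m,n,H,G)=\lambda(T_m,H,G)=0$ for every $m$.

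The main task is Equation~\eqref{E:lim}. Lemma~\ref{L:wrongorder} identifies $\mu_u(V_{S,H})$ with the claimed right-hand side, so it suffices to prove $\lim_n\mu_{u,n}(V_{S,H})=\mu_u(V_{S,H})$. One inequality is automatic: $V_{S,H}\subset U_{T_m,H}$ together with Theorem~\ref{T:calc} for the finite $T_m$ gives
\[\limsup_{n\to\infty}\mu_{u,n}(V_{S,H})\leq \lim_{n\to\infty}\mu_{u,n}(U_{T_m,H})=\mu_u(U_{T_m,H}),\]
and letting $m\to\infty$ produces $\limsup_n\mu_{u,n}(V_{S,H})\leq\mu_u(V_{S,H})$.

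For the reverse inequality I argue by dichotomy. If $\mu_u(V_{S,H})=0$ the upper bound already forces $\lim_n\mu_{u,n}(V_{S,H})=0$. Otherwise $\mu_u(V_{S,H})>0$, which via Lemma~\ref{L:wrongorder} forces the weighted sums $\sum_{G\in\mathcal{N}}\lambda(S,H,G)|G|^{-u}$ and $\sum_{G\in\mathcal{A}_H}\lambda(S,H,G)(h_H(G)-1)^{-1}|G|^{-u}$ to be finite. Writing the arbitrary-$S$ version of Equation~\eqref{E:fixedn} as $\mu_{u,n}(V_{S,H})=A_n\prod_G f_G(n)$, with each $f_G(n)\in[0,1]$, $A_n\to\frac{1}{|\Aut(H)||H|^u}$, and $f_G(n)\to f_G(\infty)$ by the finite-$S$ case of Equation~\eqref{E:lim}, I produce uniform-in-$n$ bounds $1-f_G(n)\leq\lambda(S,H,G)(h_H(G)-1)^{-1}|G|^{-u}$ for abelian $G$ and $1-f_G(n)\leq\lambda(S,H,G)|G|^{-u}$ for non-abelian $G$, by combining Corollaries~\ref{C:finalmab}--\ref{C:finalmnon} with the elementary bound $|\Sur(\rho,\pi)|\leq|G|^n$ from Proposition~\ref{P:countfromF}. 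Splitting the product at any finite $\mathcal{F}\subset\mathcal{A}_H\cup\mathcal{N}$, the head $\prod_{G\in\mathcal{F}}f_G(n)$ converges to $\prod_{G\in\mathcal{F}}f_G(\infty)$ while the tail obeys $\prod_{G\notin\mathcal{F}}f_G(n)\geq 1-\sum_{G\notin\mathcal{F}}(1-f_G(n))$, and the summability just established lets me make the tail uniformly close to $1$ by enlarging $\mathcal{F}$, yielding $\liminf_n\mu_{u,n}(V_{S,H})\geq\mu_u(V_{S,H})$.

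The principal obstacle is this uniform tail estimate in the nonzero case. The crux is that $\lambda(S,H,G)$ is precisely the value obtained from Corollaries~\ref{C:finalmab}--\ref{C:finalmnon} by replacing $|\Sur(\rho,\pi)|$ with its supremum $|G|^n$, so the very same identity delivers the required one-sided bound uniformly in $n$; the positivity of $\mu_u(V_{S,H})$ then supplies exactly the summability needed to push this estimate through the infinite product.
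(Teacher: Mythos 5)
Your proposal is correct, and it shares the paper's skeleton: approximate $S$ by finite $T_m$, obtain Equation~\eqref{E:fixedn} for arbitrary $S$ as a monotone limit via Lemma~\ref{L:mincinS}, observe that $\limsup_n \mu_{u,n}(V_{S,H}) \le \mu_u(V_{S,H})$ is automatic, and then split into the degenerate case and the case where the weighted sums of the $\lambda(S,H,G)$ converge (the paper's Case 1/Case 2 dichotomy, which is equivalent to your dichotomy on whether $\mu_u(V_{S,H})$ vanishes). Where you genuinely diverge is the mechanism for the reverse inequality in the convergent case. The paper argues probabilistically: if $(X_{u,n})^{\bar{T}_m}\isom H$ but $(X_{u,n})^{\bar{S}}\not\isom H$, then $X_{u,n}$ surjects onto a \emph{minimal non-trivial $H$-extension} of level $S$ but not of level $T_m$, and a first-moment bound $\Prob(X_{u,n} \textrm{ surjects onto } E)\le |\Aut(E)|^{-1}|E|^{-u}$ plus the convergence hypothesis kills the union bound over escaping extensions as $m\ra\infty$. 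You instead stay inside the product formula, and your uniform-in-$n$ estimates are correct: from Corollary~\ref{C:finalmab} and $|\Sur(\rho,\pi)|\le |G|^n$ one gets $1-\prod_{k=0}^{m(S,n,H,G)-1}(1-h_H(G)^k|G|^{-n-u}) \le \frac{h_H(G)^{m(S,n,H,G)}-1}{h_H(G)-1}|G|^{-n-u}\le \frac{\lambda(S,H,G)}{(h_H(G)-1)|G|^u}$, and similarly in the non-abelian case via Corollary~\ref{C:finalmnon}; positivity of the product in Lemma~\ref{L:wrongorder} supplies exactly the summability needed for the tail bound $\prod_G (1-x_G)\ge 1-\sum_G x_G$. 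The two tails are really the same quantity in different clothing, since $\lambda(S,H,G)$ is a weighted count of the minimal extensions with kernel $G$; your route avoids introducing minimal $H$-extensions and the moment computation, while the paper's avoids using the arbitrary-$S$ product formula in the lower bound. One point worth making explicit in your write-up: for the head of the product, the per-factor convergence $f_G(n)\to f_G(\infty)$ requires noting that for each fixed $G$ there are only finitely many relevant $H$-extensions, so $m(T_m,n,H,G)$ stabilizes at $m(S,n,H,G)$ uniformly in $n$ and the factor genuinely coincides with a finite-$S$ factor, to which the limit computation in the proof of Theorem~\ref{T:calc} applies.
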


\begin{proof}
 Let $T_m$ be the subset of $S$ of all groups of order at most $m$ in $S$.
 Since $H$ is level $S$, for large enough $m$ we have that $ H^{\bT_m}=H$, and from now on we only consider $m$ this large. 
 We can show that $G^{\bS}\isom H$ if and only if for every $m\geq 1$ we have $G^{\bT_m}\isom H^{\bT_m}.$
Since  $G^{\bar{T}_m}$ is a quotient of $G^{\bar{S}}$, the ``only if'' direction is clear.  If we take the inverse limit of the sets $\operatorname{Isom}(G^{\bar{T}_m},H^{\bar{T}_m})$, with the natural maps, we have an inverse limit of non-empty finite sets, which is non-empty.  
An element of this inverse limit gives us an isomorphism $G^{\bar{S}}\isom H^{\bar{S}}$.
 
 From this, and the basic properties of a measure, and Equation~\eqref{E:fixedn} for finite $S$, we have that 
\begin{eqnarray*}
 &&\Prob((X_{u,n})^{\bS}\isom H)\\
 &=& \lim_{m\ra\infty} \frac{|\Sur(\hat{F}_n,H)|}{|\Aut(H)||H|^{n+u}}\prod_{\substack{G \in \mathcal{A}_H}} 
\prod_{k=0}^{m(T_m,n,H,G)-1} (1-\frac{h_H(G)^k}{ |G|^{{n+u}}})
 \prod_{\substack{ G\in \mathcal{N}}}  (1-|G|^{-{n-u}})^{m(T_m,n,H,G)}.
\end{eqnarray*}
 From  Lemma~\ref{L:mincinS}, we have that
 $$
 \prod_{k=0}^{m(T_m,n,H,G)-1} (1-\frac{h_H(G)^k}{ |G|^{{n+u}}}) \quad \quad \textrm{and} \quad \quad 
 (1-|G|^{-{n-u}})^{m(T_m,n,H,G)}
 $$
are non-increasing in $m$, and as they are probabilities they are in the interval $[0,1]$.  Thus it follow from basic analysis that
\begin{align*}
& \Prob((X_{u,n})^{\bS}\isom H)\\= & \frac{|\Sur(\hat{F}_n,H)|}{|\Aut(H)||H|^{n+u}}\prod_{\substack{G \in \mathcal{A}_H}} 
\lim_{m\ra\infty} \prod_{k=0}^{m(T_m,n,H,G)-1} (1-\frac{h_H(G)^k}{ |G|^{{n+u}}})
 \prod_{\substack{ G\in \mathcal{N}}} \lim_{m\ra\infty} (1-|G|^{-{n-u}})^{m(T_m,n,H,G)}
\end{align*}
By definition of $m(S,n,H,G)$, we have that $\lim_{m\ra\infty} m(T_m,n,H,G)=m(S,n,H,G)$ (and the latter is finite).
  Thus, we have
  \begin{align*}
& \Prob((X_{u,n})^{\bS}\isom H)\\= & \frac{|\Sur(\hat{F}_n,H)|}{|\Aut(H)||H|^{n+u}}\prod_{\substack{G \in \mathcal{A}_H}} 
\prod_{k=0}^{m(S,n,H,G)-1} (1-\frac{h_H(G)^k}{ |G|^{{n+u}}})
 \prod_{\substack{ G\in \mathcal{N}}}  (1-|G|^{-{n-u}})^{m(S,n,H,G)},
\end{align*}
which is  Equation~\eqref{E:fixedn} for arbitrary $S$.

Next, towards Equation~\eqref{E:lim} for arbitrary $S$, we will show that the order of the limits in Lemma~\ref{L:wrongorder} could be exchanged.

For every $m$, we have $\Prob((X_{u,n})^{\bS}\isom H)\leq \Prob((X_{u,n})^{\bT_m}\isom H)$ and so
\begin{eqnarray}
&&\limsup_{n\ra\infty} \Prob((X_{u,n})^{\bS}\isom H)\notag\\
&\leq& \lim_{m\ra\infty} \lim_{n\ra\infty} \Prob((X_{u,n})^{\bT_m}\isom H)\notag\\
&=& \frac{1}{|\Aut(H)||H|^{u}}\prod_{\substack{G \in \mathcal{A}_H}} \prod_{i=1}^{\infty} (1-\lambda(S,H,G)
\frac{h_H(G)^{-i}}{ |G|^{{u}}})
  \prod_{\substack{ G\in \mathcal{N}}}  e^{-|G|^{-u}\lambda(S,H,G)}.\label{E:doublim}
\end{eqnarray}

From here we consider two cases.
Case 1 will be the following:
$$
\sum_{G\in \mathcal{A}_H } \frac{\lambda(S,H,G)}{h_H(G)|G|^u} +\sum_{G\in  \mathcal{N} } \frac{\lambda(S,H,G)}{|G|^u} \quad \textrm{diverges}.
$$
In case $1$, the product in Equation~\eqref{E:doublim} is $0$,  and we have proven $\lim_{n\ra\infty} \Prob((X_{u,n})^{\bS}\isom H)=0$, establishing Equation~\eqref{E:lim}.

Case 2 will be the following:
$$
\sum_{G\in \mathcal{A}_H } \frac{\lambda(S,H,G)}{h_H(G)|G|^u} +\sum_{G\in  \mathcal{N} } \frac{\lambda(S,H,G)}{|G|^u} \quad \textrm{converges}.
$$
We define a \emph{minimal} non-trivial $H$-extension $(E,\pi)$ to be an $H$-extension whose only quotient $H$-extensions are itself and the trivial one.  These are exactly the $H$-extensions with $\ker \pi$ an irreducible $E$-group (under conjugation).  %, by minimality.  
 Also, these are exactly the $H$-extensions $(E,\pi)$ such that $\ker \pi$ is an abelian irreducible $H$-group or $\ker \pi$ is a power of a non-abelian simple group and an irreducible $E$-group.
  Since $|\Aut(E)|\geq |\Aut_H(E,\pi)|$ and $h_H(G)\geq 2$,  we have
$$
\sum_{G\in \mathcal{A}_H } \frac{\lambda(S,H,G)}{h_H(G)|G|^u} +\sum_{G\in  \mathcal{N} } \frac{\lambda(S,H,G)}{|G|^u} 
 \geq  \frac{1}{2}\sum_{\substack{(E,\pi) \textrm{ min. non-triv. $H$-extension  }
\\ \textrm{$E$ level $S$}}}|\Aut(E)|^{-1} |G|^{-u}. 
$$
Since we are in case 2, the sum on the right converges, and
\begin{eqnarray*}
&&\lim_{m\ra\infty} \sum_{\substack{(E,\pi) \textrm{ min. non-triv. $H$-extension  }\\
\textrm{$E$ level $S$, but not level $T_m$}}}|\Aut(E)|^{-1} |E|^{-u}\\
&=&|H|^{-u}\lim_{m\ra\infty} \sum_{\substack{(E,\pi) \textrm{ min. non-triv. $H$-extension  }\\
\textrm{$E$ level $S$, but not level $T_m$}}}|\Aut(E)|^{-1} |G|^{-u}  \\
&=&0.
\end{eqnarray*}

   If $(X_{u,n})^{\bS}\not\isom H,$ but $(X_{u,n})^{\bT_m}\isom H$ for some $m$, then $X_{u,n}$ has a surjection to $H$ and thus $X_{u,n}$ has a surjection to some minimal non-trivial $H$-extension $(E,\pi)$ of level $S$ but not level $T_m$.
Note that
\begin{align*}
\Prob(X_{u,n} \textrm{ has a surjection to $E$})&\leq \E(\textrm{quotients of $X_{u,n}$ isom. to $E$})\\
&= |\Aut(E)|^{-1} \E( |\Sur(X_{u,n},E)|)\\
&= |\Aut(E)|^{-1} \frac{|\Sur(\hF_n,E)|}{|E|^{n+u}}\\
&\leq |\Aut(E)|^{-1} |E|^{-u}.
\end{align*}

Thus,
\begin{align*} 
\Prob((X_{u,n})^{\bS}\isom H) \geq \Prob((X_{u,n})^{\bT_m}\isom H) -\sum_{\substack{(E,\pi) \textrm{ min. non-triv. $H$-extension  }\\
\textrm{$E$ level $S$, but not level $T_m$}}}|\Aut(E)|^{-1} |E|^{-u} 
\end{align*}
and 
\begin{eqnarray*} 
&&\liminf_{n\ra\infty} \Prob((X_{u,n})^{\bS}\isom H) \\
&\geq& \lim_{n\ra\infty} \Prob((X_{u,n})^{\bT_m}\isom H) -\sum_{\substack{(E,\pi) \textrm{ min. non-triv. $H$-extension  }\\
\textrm{$E$ level $S$, but not level $T_m$}}}|\Aut(E)|^{-1} |E|^{-u} 
\end{eqnarray*} 
Now we take a $\lim_{m\ra\infty}$ of both sides and  conclude Equation~\eqref{E:lim} for arbitrary $S$.
Finally, note that if $m(S,n,H,G)\ne 0$, then $m(T_m,n,H,G)\ne 0$ for some $m$, and so the last statement of Theorem~\ref{T:calc} for infinite $S$ follows from the same statement for finite $S$. 
\end{proof}
 
Though this doesn't follow from weak convergence (see Proposition~\ref{P:alltrivial} and Remark~\ref{R:alltrivial}, for example), we see here that $\mu_u$ and $\lim_{n\ra\infty} \mu_{u,n}$ agree on the $V_{S,H}$.
\begin{corollary}\label{C:coninfS}
  Let $S$ be a set of finite groups and $H$ a finite level $S$ group. Then we have 
  $$\lim_{n\to \infty}\mu_{u,n} (V_{S,H}) = \mu_u(V_{S,H}).$$
\end{corollary}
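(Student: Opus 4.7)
The plan is to observe that both $\lim_{n\to\infty}\mu_{u,n}(V_{S,H})$ and $\mu_u(V_{S,H})$ have already been computed in terms of the same explicit product, so the corollary reduces to a one-line comparison.

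First I would note that by the very definition of $V_{S,H}$, we have $\mu_{u,n}(V_{S,H}) = \Prob((X_{u,n})^{\bS}\simeq H)$. Theorem~\ref{T:calcinf} extends Theorem~\ref{T:calc} to arbitrary $S$, and in particular gives
\[
\lim_{n\to\infty} \Prob((X_{u,n})^{\bS}\simeq H)
= \frac{1}{|\Aut(H)||H|^{u}}\prod_{G \in \mathcal{A}_H} \prod_{i=1}^{\infty} \Bigl(1-\lambda(S,H,G)\frac{h_H(G)^{-i}}{|G|^{u}}\Bigr) \prod_{G\in \mathcal{N}}  e^{-|G|^{-u}\lambda(S,H,G)}.
\]
On the other hand, Lemma~\ref{L:wrongorder}, which computes $\mu_u(V_{S,H})$ via the countable intersection $V_{S,H} = \bigcap_{m\geq 1} U_{T_m, H^{\bT_m}}$ for any exhausting sequence $T_1\subset T_2\subset \cdots$ of finite subsets of $S$ with union $S$, produces the exact same expression on the right-hand side.

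Comparing the two formulas gives $\lim_{n\to\infty}\mu_{u,n}(V_{S,H}) = \mu_u(V_{S,H})$, which is the claim. There is no obstacle to overcome here beyond verifying that the two quoted results really do yield identical right-hand sides (they do, since they are both written in terms of the invariants $\lambda(S,H,G)$ rather than the $\lambda(T_m,H,G)$); all the substantive analytic work—controlling the order of limits in $n$ and $m$ by splitting into the divergent and convergent cases of $\sum_{G\in \mathcal{A}_H} \lambda(S,H,G)/(h_H(G)|G|^u) + \sum_{G\in\mathcal{N}} \lambda(S,H,G)/|G|^u$ and using the first-moment bound $\Prob(X_{u,n} \twoheadrightarrow E)\leq |\Aut(E)|^{-1}|E|^{-u}$ to kill the $H$-extensions of level $S$ not of level $T_m$—has already been absorbed into the proof of Theorem~\ref{T:calcinf}.
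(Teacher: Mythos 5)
Your proposal is correct and is essentially the paper's own argument: the paper likewise deduces the corollary by combining the limit computation underlying Theorem~\ref{T:calcinf} (Equation~\eqref{E:lim} for arbitrary $S$, i.e.\ the interchange of the limits in $n$ and $m$) with Lemma~\ref{L:wrongorder}, together with the observation that $\mu_{u,n}(V_{S,H})=\lim_{m\to\infty}\Prob((X_{u,n})^{\bT_m}\simeq H)=\Prob((X_{u,n})^{\bS}\simeq H)$ since $\mu_{u,n}$ is a measure. Citing the statement of Theorem~\ref{T:calcinf} rather than the intermediate limit-exchange step of its proof is an equivalent, slightly cleaner packaging of the same reasoning.
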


\begin{proof}
In the proof of Theorem~\ref{T:calcinf}, we showed that 
$$\lim_{n\to \infty} \lim_{m\to \infty} \Prob((X_{u,n})^{\bT_m} \simeq H) = \lim_{m \to \infty} \lim_{n \to \infty} \Prob((X_{u,n})^{\bT_m}\simeq H).$$
By Lemma~\ref{L:wrongorder}, the right-hand side in the above equation is $\mu_u(V_{S,H})$. Also, since $\mu_{u,n}$ are measures on $\mathcal{P}$, we have
$\lim_{m\to \infty}\Prob((X_{u,n})^{\bT_m}\simeq H)= \mu_{u,n}(V_{S,H})$.
\end{proof}

\section{Examples of the values of $\mu_u$}\label{S:examples}

In this section, we will apply Theorem~\ref{T:calcinf} to compute $\mu_u(A)$ for some interesting Borel sets $A$.

\begin{example}[Trivial group]
Let $S$ contain every finite group. Then the trivial group is the only element in $V_{S,1}$. By Lemma~\ref{L:HfactR}, if $(E,\pi)$ is an extension of the trivial group such that $\ker \pi$ is irreducible $E$-group, then $E$ is a finite simple group. Then it follows from the definition of $\lambda(S,H,G)$ that
$$\lambda(S,1,G)=\begin{cases}
  1 & G \text{ is a non-trivial abelian simple group}\\
  |\Aut(G)|^{-1} & G \text{ is a non-abelian simple group}\\
  0 & otherwise,
\end{cases}
$$
where in the first case, we use the fact that $h_1(G)-1=|\Hom(G,G)|-1=|\Aut(G)|$ as $G$ is simple. 
By Theorem~\ref{T:calcinf}, we have 
$$\mu_u(\text{trivial group})=\prod_{p\text{ prime}}\prod_{i=u+1}^{\infty}(1-p^{-i}) \prod_{\substack{G \text{ finite simple}\\ \text{non-abelian group}}} e^{-|G|^{-u}|\Aut(G)|^{-1}}.$$
The above product over prime integers is zero if and only if $u\leq 0$. When $u\geq 1$, by the classification of finite simple groups, the number of finite simple groups of given order is at most 2. Note that $|\Aut(G)|\geq |\Inn(G)|=|G|$ for every non-abelian simple group $G$. We have
$$\prod_{\substack{G \text{ finite simple}\\ \text{non-abelian group}}} e^{-|G|^{-u}|\Aut(G)|^{-1}} \geq \exp (-\sum_{\substack{G \text{ finite simple}\\ \text{non-abelian group}}}|G|^{-u-1}) > 0,$$
which shows that $\mu_u(\text{trivial group})>0$ if and only if $u\geq 1$. By using the classification of finite simple groups, we are able to give the following approximations
$$\mu_u(\text{trivial group})\approx\begin{cases}
 0.4357 & \text{when }u=1\\
 0.7168 & \text{when }u=2\\
 0.8616 & \text{when }u=3.
\end{cases}$$
We observe that the product over non-abelian factors is very close to 1 and cannot be seen in this many digits.
\end{example}

\begin{example}[Any infinite group]
Again let $S$ contain all finite groups. Let $H$ be an infinite profinite group in $\mathcal{P}$, and $H_{\ell}$ denote the pro-$\bS_{\ell}$ completion of $H$. Since $U_{S_\ell, H_{\ell}}$ is a sequence of basic opens that is decreasing in $\ell$ and $\cap_{\ell} U_{S_{\ell}, H_{\ell}} = \{H\}$, we obtain 
\begin{eqnarray*}
 \mu_u(\{H\})&=& \lim_{\ell \to \infty} \mu_u(U_{S_\ell, H_\ell})\\
 &\leq& \lim_{\ell\to \infty} \frac{1}{|\Aut(H_\ell)| |H_\ell|^u}.
\end{eqnarray*}
Note that $H$ is the inverse limit of $H_{\ell}$, so $\lim_{\ell\to \infty} |H_\ell| =\infty$. It follows that $\mu_u(\{H\})=0$ when $u\geq 1$. When $u=0$, since $\{H\}$ is contained in the Borel set $A:=V_{\{\text{all abelian groups}\}, H^{ab}}$ and $\mu_0(A)=0$ (see Example~\ref{E:abelian}), we have $\mu_0(\{H\})=0$.
\end{example}

\begin{example}[Pro-$p$ abelianization]\label{ex:abelian}
Let $p$ be a prime integer and $S$ the set consisting of all finite abelian $p$-groups. Then $\bS=S$ and $(\Z/p\Z, 1)$ is the only element in $\CF(\bS)$. Let $H$ be a finite abelian $p$-group of generator rank $d$. Then for any $G\in\mathcal{A}_H\cup \mathcal{N}$, the factor in Theorem~\ref{T:calc} associated to $G$ is $1$, unless $G=\Z/p\Z$ with the trivial $H$-action.
We consider the Borel set $V_{S,H}$ that is the set of all profinite groups whose maximal abelian pro-$p$ quotient is $H$. For any integer $n\geq d$ , there is a normal subgroup $N$ of $(\hat{F}_n)^{\bS}=(\Z_p)^n$ such that the corresponding quotient is $H$. Since $H$ is finite, $N$ is isomorphic to $(\Z_p)^n$ with the trivial $(\hat{F}_n)^{\bS}$-action, which shows that $m(S,n,H,\Z/p\Z)=n$ and $\lambda(S,H,\Z/p\Z)=1$. By Lemma~\ref{L:wrongorder}, we have
$$\mu_u(V_{S,H})=\frac{1}{|\Aut(H)||H|^u} \prod_{i=1}^{\infty}(1-p^{-i-u}).$$
When $u<0$, this probability is 0, which is as expected since we can never get a finite quotient of $(\Z_p)^n$ with fewer than $n$ relators. When $u\geq 0$, we get a finite group with probability 1. When $u=0$ or $1$, these are the measures used in the Cohen-Lenstra heuristics for class groups of quadratic number fields.

More generally, let's consider an infinite abelian pro-$p$ group $H$ in $\mathcal{P}$. Since $H\in \mathcal{P}$, the pro-$\overline{\{\Z/p\Z\}}$ completion of $H$ is finite, so $H$ is finitely generated, i.e. $H=H_1 \times (\Z_p)^r$ for a finite abelian $p$-group $H_1$ and a positive integer $r$. Let $T_j:=\{\Z/p^j\Z\}$. So $\bT_j$ is an increasing sequence and $\cup \bT_j=S$. Assume $n\geq d$ and $j$ is greater than the exponent of $H_1$. Then we have
 $$(\hat{F}_n)^{\bT_j}=(\Z/p^j\Z)^n \text{  and  } H^{\bT_j}=H_1 \times (\Z/p^j\Z)^r.$$
So $m(T_j, n,H, \Z/p\Z)=n-r$, $\lambda(T_j, H, \Z/p\Z)=p^{-r}$ and 
\begin{eqnarray*}
\mu_u(V_{T_j,H}) &=& \frac{1}{|\Aut(H^{\bT_j})||H_1|^u p^{jru}} \prod_{i=1+u+r}^{\infty}(1-p^{-i})\\
&=&\frac{1}{|\Aut(H_1)||H_1|^{2r+u}p^{jr(r+u)}} \prod_{i=1}^r(1-p^{-i})^{-1}\prod_{i=1+u+r}^{\infty}(1-p^{-i}),
\end{eqnarray*}
since $$|\Aut(H^{\bT_j})|
= |\Aut(H_1)||H_1|^{2r} p^{jr^2} \prod_{i=1}^r (1- p^{-i}).$$
It follows that $\mu_u(V_{S,H})=\lim_{j\to \infty}(V_{T_j,H})> 0$ if and only if $u+r=0$, in which case
$$\mu_u(V_{S,H})=\frac{1}{|\Aut(H_1)||H_1|^{-u}} \prod_{i=1-u}^{\infty} (1-p^{-i}).$$
So we see that when $u<0$, $\mu_u(V_{S, H})>0$ if and only if the (torsion-free) rank of $H$ is $-u$ and we get the groups in such form with probability 1.
\end{example}

\begin{example}[Abelianization]\label{E:abelian}
Similar to the example above, when $S$ is the set of all finite abelian groups and $H$ is a finite abelian group, we have
$$
\mu_u(V_{S,H})=\frac{1}{|\Aut(H)||H|^u} \prod_{p \textrm{ prime}}\prod_{i=1}^{\infty}(1-p^{-i-u}),
$$
which is $0$ if $u\leq 0$ and is positive if $u\geq 1$.  If $H=H_1 \times (\hat{\Z})^r$, then 
$$\mu_u(V_{S,H})=\frac{1}{|\Aut(H_1)||H_1|^{-u}} \prod_{p \textrm{ prime}} \prod_{i=1-u}^{\infty} (1-p^{-i})>0$$
if $u=-r<0$ and $\mu_u(V_{S,H})=0$ otherwise.
\end{example}

In order to consider the pro-$p$ quotients of our random groups, we will first need to recall the definitions of some $p$-group invariants.
Let $H$ be a finite $p$-group of generator rank $d$. The relation rank $r(H)$ of $H$ is defined to be the smallest number of relations in a pro-$p$ presentation of $H$ (and also it is known that $r(H)=\dim_{\F_p}\operatorname{H}^2(H, \Z/p\Z)$). 
Let $1\to N\to \hat{F}_d \to H \to 1$ be a presentation of $H$. Define $N^*:=[N,\hat{F}_d]\cdot N^p$. Then $N^*$ is the minimal $\hat{F}_d$-normal subgroup of $N$ such that $N/N^*$ is a finite elementary abelian $p$-group with trivial $\hat{F}_d/N^*$-action. $\hat{F}_d/N^*$ is called the \emph{$p$-covering group} of $H$, and $N/N^*$ is called the \emph{$p$-multiplicator} of $H$, and $\dim _{\F_p}(N/N^*)$ is called the \emph{$p$-multiplicator rank} of $H$.  It is not hard to see that  $r(H)$ is the $p$-multiplicator rank of $H$.

\begin{lemma}\label{L:p-multi}
  Let $H$ be a finite $p$-group of generator rank $d$, $S$ the set of all finite $p$-groups, and $G$ the $H$-group that is isomorphic to $\Z/p\Z$ with trivial $H$-action. Then $m(S,d,H,G)=r(H)$ and $m(S,n,H,G)=r(H)+n-d$ for every $n\geq d$.
\end{lemma}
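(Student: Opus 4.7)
The plan is to identify $m(S,n,H,G)$ with $\dim_{\F_p} H^1(N,\F_p)^H$, where $N$ is the kernel of a surjection $(\hat F_n)^{\bS} \twoheadrightarrow H$, and then to compute that dimension via the Lyndon--Hochschild--Serre five-term exact sequence.

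Let $1\to R\to F\to H\to 1$ be the fundamental short exact sequence associated to $S, n, H$. Since $S$ is the set of all finite $p$-groups, $(\hat F_n)^{\bS}$ is the free pro-$p$ group on $n$ generators, so $F$ is a finite $p$-group and $R$ is a $p$-group. Decomposing $R=\prod_i G_i^{m_i}$ into pairwise non-isomorphic irreducible $F$-groups, each $G_i$ is characteristically simple and a $p$-group, hence elementary abelian, carrying an irreducible $\F_p[F]$-module structure. By Schur's lemma (Lemma \ref{L:preSchur}), $\Hom_F(G_i,\F_p)\neq 0$ if and only if $G_i\cong G$, the trivial $\F_p[F]$-module, in which case $\Hom_F(G_i,\F_p)=\F_p$. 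Thus
\[ m(S,n,H,G) \;=\; \dim_{\F_p}\Hom_F(R,\F_p). \]

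Next I would show that pullback along $N\twoheadrightarrow R=N/M$ produces an isomorphism $\Hom_F(R,\F_p)\xrightarrow{\sim}\Hom_{(\hat F_n)^{\bS}}(N,\F_p)$. Injectivity is immediate. For surjectivity, any nonzero continuous $(\hat F_n)^{\bS}$-equivariant $\phi: N\to \F_p$ is surjective, so $\ker\phi$ has index $p$ in $N$ and is therefore a maximal proper $(\hat F_n)^{\bS}$-normal subgroup of $N$; hence $M\subseteq\ker\phi$ and $\phi$ factors through $R$. The right-hand space equals $\Hom_{\mathrm{cont}}(N/[N,(\hat F_n)^{\bS}]N^p,\F_p)$, whose dimension agrees with that of $N/[N,(\hat F_n)^{\bS}]N^p$, and this in turn equals $\dim_{\F_p}H^1(N,\F_p)^H$, since $H$-invariance of a map to a trivial module exactly means annihilating commutators $[N,(\hat F_n)^{\bS}]$ and $p$-powers.

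Finally I would read off this dimension from the five-term exact sequence of continuous cohomology,
\[ 0\to H^1(H,\F_p)\to H^1((\hat F_n)^{\bS},\F_p)\to H^1(N,\F_p)^H\to H^2(H,\F_p)\to H^2((\hat F_n)^{\bS},\F_p). \]
The last term vanishes because $(\hat F_n)^{\bS}$ is free pro-$p$; the second term has dimension $n$; by Burnside's basis theorem $\dim H^1(H,\F_p)=d$; and $\dim H^2(H,\F_p)=r(H)$ by definition. Exactness forces $\dim H^1(N,\F_p)^H = n-d+r(H)$, yielding $m(S,n,H,G)=r(H)+n-d$ for every $n\geq d$, and specializing to $m(S,d,H,G)=r(H)$. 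The main obstacle is the identification in the middle paragraph: everything else is either standard pro-$p$ cohomology or falls out of the results already in the paper, whereas that step requires the somewhat delicate fact that every continuous equivariant $\F_p$-quotient of $N$ survives the passage to $R=N/M$, which in turn rests on the maximality observation above.
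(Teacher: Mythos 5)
Your proof is correct, but it takes a genuinely different route from the paper's. The paper works with a large finite truncation $T_i\subset S$ containing the $p$-covering group of $H$, identifies $R$ in the fundamental short exact sequence with the $p$-multiplicator $N/N^*$ (using that $\CF(\bS)=\{(\Z/p\Z,1)\}$, so $R$ is the maximal elementary abelian quotient of $N$ with trivial action), and then handles $n>d$ by an explicit induction: restricting a presentation on $n+1$ generators with the last generator mapping to $1$, it shows $R_{n+1}\isom R_n\times \Z/p\Z$. You instead prove the formula for all $n\geq d$ in one stroke by identifying $m(S,n,H,G)$ with $\dim_{\F_p}H^1(N,\F_p)^H$ and reading off $n-d+r(H)$ from the five-term exact sequence, using $H^2(\hat{F}_n^{\bS},\F_p)=0$. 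Your middle step (that every continuous equivariant map $N\to\F_p$ factors through $R=N/M$ because its kernel is a maximal proper $(\hat{F}_n)^{\bS}$-normal subgroup of $N$) is sound and is the same mechanism the paper uses in Proposition~\ref{P:countNother}. What your approach buys: no induction on $n$, and it works directly from the cohomological definition $r(H)=\dim_{\F_p}\operatorname{H}^2(H,\Z/p\Z)$ rather than routing through the $p$-multiplicator interpretation. What it costs: since $S$ is infinite, $m(S,n,H,G)$ is \emph{defined} as $\lim_i m(T_i,n,H,G)$ and the fundamental short exact sequence is only set up for finite $S$, so you should add a sentence bridging your free-pro-$p$ computation to the truncated ones --- e.g., every relevant extension $E$ of $H$ by $\Z/p\Z$ has order $p|H|$ and hence is level $T_i$ for $i$ large, so the truncated multiplicities stabilize to the value you compute from the free pro-$p$ group. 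This is routine (and the paper makes the analogous move explicitly), but as written it is elided.
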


\begin{proof}
  Since the intersection of every normal subgroup and the center of a finite $p$-group is nontrivial, every finite $p$-group acts trivially on all of its minimal normal subgroups, which implies $\CF(\bS)=\{(\Z/p\Z, 1)\}$. Recall that $m(S,n,H,G)$ is defined to be $\lim_{i\to\infty} m(T_i, n, H, G)$, where $T_i$ is an increasing sequence of finite sets of groups such that $\cup T_i=S$. When $i$ is sufficiently large such that $T_i$ contains the $p$-covering group of $H$, the map $\rho:(\hat{F}_d)^{\bT_i}\to H$ factors through the $p$-covering group of $H$. Let $1\to R \to F \to H \to 1$ be the fundamental short exact sequence associated to $T_i, d, H$. It is not hard to check that $R$ is also the maximal quotient of $\ker \rho$ that is an elementary abelian $p$-group with the trivial $F$-action. Therefore, $R$ is the $p$-multiplicator  of $H$ and $m(S,d,H,G)=m(T_i, d,H,G)=r(H)$.
  
  Assume $n\geq d$. We can find a surjection $\rho_1:\hat{F}_{n+1}\to H$ and generators $x_1, \cdots, x_{n+1}$ of $\hat{F}_{n+1}$ such that $\rho_1(x_{n+1})=1$. Let $\rho_2$ be the restriction of $\rho_1$ on the subgroup generated by $x_1, \cdots, x_n$. Then $\rho_2: \hat{F}_n\to H$ is a surjection. Let $1\to R_1 \to F_1 \overset{\pi_1}{\to} H \to 1$ and $1\to R_2 \to F_2 \overset{\pi_2}{\to} H \to 1$ be the fundamental short exact sequences associated to $T_i, n+1, H$ and $T_i, n, H$ that arise from $\rho_1$ and $\rho_2$ respectively. These constructions allow us to get a surjection $\pi: F_1 \to F_2$ with $\pi_1=\pi_2 \circ \pi$, and a generator set $y_1, \cdots, y_{n+1}$ of $F_1$ such that $\pi(y_{n+1})=1$. Since $y_{n+1}\in\ker \pi_1$ and $F_1$ acts trivially on $R_1$, the subgroup generated by $y_{n+1}$, which is isomorphic to $\Z/p\Z$, is normal in $F_1$. It implies that $R_1\simeq R_2 \times \Z/p\Z$ and $m(T_i, n+1, H, G)=m(T_i, n, H, G)+1$. By induction on $n$, we finish the proof of the lemma.
\end{proof}

\begin{example}[Pro-$p$ quotient]
  Let $H$ be a finite $p$-group of generator rank $d$, and $S$ the set of all finite $p$-groups, and $G$ the $H$-group that is isomorphic to $\Z/p\Z$ with trivial $H$-action. Since $\CF(\bS)=(\Z/p\Z,1)$, we have
$$\mu_{u}(V_{S,H})=\frac{1}{|\Aut(H)||H|^u} \prod_{i=1}^{\infty}(1-\frac{\lambda(S,H,G)}{p^{i+u}}).$$
By Equation \eqref{E:relmandlam} and Lemma \ref{L:p-multi}, $\lambda(S,H,G)=p^{r(H)-d}$. So 
$$\mu_u(V_{S,H})=\frac{1}{|\Aut(H)||H|^u}\prod_{i=1+u-r(H)+d}^{\infty} (1-p^{-i}),$$
and $\mu_u(V_{S,H})>0$ if and only if $u\geq r(H)-d$.  

Given $u$, if $X_{n,u}^{\bS}$ has generator rank $d$ with 
 $d^2/4\geq d+u$, we have that the $X_{n,u}^{\bS}$ is necessarily infinite by the Golod-Shafarevich inequality.  We can see from the pro-$p$ abelianization that we get groups $X_{n,u}^{\bS}$ with each generator rank $d\geq \min(0,-u)$ with positive probability. 
 All groups in $\mathcal{P}$ have their pro-$p$ quotient finitely generated. %Finitely presented?
\end{example}

\begin{example}[Pro-nilpotent quotient]
When $S$ is the set of all finite nilpotent groups and $H$ is a finite nilpotent group with Sylow $p$-subgroup $H_p$ of generator rank $d_p$, we have
$$\mu_u(V_{S,H})=\frac{1}{|\Aut(H)||H|^u}\prod_{p \textrm{ prime}}\prod_{i=1+u-r(H_p)+d_p}^{\infty} (1-p^{-i}).$$
Let $W$ be the set of profinite groups $G$ such that there are only finitely many primes $p$ such that the maximal pro-$p$ quotient of $G$ has generator rank $\geq max(2,-u+1)$.
By the Borel-Cantelli lemma, we can see that $\mu_u(W)=1$, and thus $\mu_u$ assigns probability $1$ to the set of groups who pro-nilpotent quotient is finitely generated.
\end{example}

\begin{example}[All infinite groups]\label{Ex:inf}
When $u\leq 0$, we have $\mu_u( \{ \textrm{infinite groups} \})=1$ (which can already be seen on the abelianization).  
When $u>0$, we have $0<\mu_u( \{ \textrm{infinite groups} \})<1$, since $\mu_u( \{ \textrm{trivial group} \})>0$ and there is positive probability of infinite pro-p quotient.
This was seen for the $\mu_{u,n}$ in \cite{Jarden2006}. 

\end{example}

\section{Which groups appear?}\label{S:prob0}
In this section, we consider the question of when $\mu_u$ is $0$ on our basic opens $U_{S,H}$.  
In order for a basic open $U_{S,H}$ to have positive probability for $\mu_{u,n}$, the group $H$ needs to be able to be generated as a pro-$\bS$ group with $n$ generators and $n+u$ relations.  We will see in Proposition~\ref{P:prob0} that the same criterion holds for 
 $\mu_u$.
We start with a lemma about the number of generators and relations required to present a pro-$\bS$ group.

\begin{lemma}\label{L:present}
Let $S$ be a finite set of finite groups and $u$ an integer.  Let $H$ be a finite pro-$\bS$ group that can be generated by $d$ generators.  If $H$ can be presented as a pro-$\bS$ group by $m$ generators and $m+u$ relations, then $H$ can be presented as a pro-$\bS$ group by $d$ generators and $d+u$ relations.
\end{lemma}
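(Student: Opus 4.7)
The approach is to induct on $m-d$; the base case $m=d$ is vacuous. For the inductive step I would show that a pro-$\bS$ presentation of $H$ with $m$ generators and $m+u$ relations, where $m>d$, yields one with $m-1$ generators and $(m-1)+u$ relations; iterating this step then drops $m$ down to $d$.

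Fix a surjection $\pi\colon (\hat F_m)^{\bS}\twoheadrightarrow H$ with $\ker\pi=[r_1,\dots,r_{m+u}]_{(\hat F_m)^{\bS}}$. Because $S$ is finite, the pro-$\bS$ group $(\hat F_m)^{\bS}$ is itself finite, so Gasch\"utz's lemma is available: choosing generators $\bar g_1,\dots,\bar g_{m-1}$ of $H$, the generating $m$-tuple $(\bar g_1,\dots,\bar g_{m-1},1)$ of $H$ lifts to a generating $m$-tuple $(g_1,\dots,g_m)$ of $(\hat F_m)^{\bS}$ with $g_m\in\ker\pi$. Since the finite group $(\hat F_m)^{\bS}$ is Hopfian and free of rank $m$ in the pro-$\bS$ category, any generating $m$-tuple is a basis; hence $(\hat F_m)^{\bS}/[g_m]_{(\hat F_m)^{\bS}}\simeq (\hat F_{m-1})^{\bS}$ canonically, and $\pi$ descends to a surjection $\bar\pi\colon (\hat F_{m-1})^{\bS}\twoheadrightarrow H$ whose kernel $\bar N$ is normally generated by the images $\bar r_1,\dots,\bar r_{m+u}$ of the original relations.

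What remains is to trim one of these $m+u$ normal generators. The key input is that $g_m$ lies in $[r_1,\dots,r_{m+u}]_{(\hat F_m)^{\bS}}$, so $g_m$ admits an explicit expression as a product of $(\hat F_m)^{\bS}$-conjugates of the $r_i^{\pm 1}$; reducing modulo $[g_m]_{(\hat F_m)^{\bS}}$ converts this expression into a non-trivial relation among the $\bar r_i$ inside $\bar N$. To upgrade such a relation into an actual reduction in the number of normal generators, I would pass to the relation module $\bar N^{\mathrm{ab}}$, regarded as a module over the profinite completion of $\Z[H]$, compare it with $(\ker\pi)^{\mathrm{ab}}$ via the Crowell-type exact sequences for $\pi$ and $\bar\pi$, and apply a Schanuel-style cancellation to conclude that $\bar N^{\mathrm{ab}}$ has one fewer module generator than $(\ker\pi)^{\mathrm{ab}}$; lifting a minimal module generating set back to $\bar N$ then gives the desired $(m+u)-1=(m-1)+u$ normal generators. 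The main obstacle is executing this module-theoretic step cleanly in the general pro-$\bS$ setting, where the passage from minimal module generators to minimal normal generators is best controlled via analogues of the Burnside basis theorem, which in this generality requires some care.
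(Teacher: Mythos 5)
Your first reduction is sound and is genuinely different from what the paper does: since $S$ is finite, $(\hat F_m)^{\bS}$ is a finite group, so Gasch\"utz's lemma plus the Hopfian property do let you assume the $m$-th free generator lies in $\ker\pi$, and quotienting by its normal closure gives a surjection $(\hat F_{m-1})^{\bS}\twoheadrightarrow H$ whose kernel $\bar N$ is normally generated by the $m+u$ images $\bar r_i$. The gap is entirely in the trimming step, and it is a genuine one. Knowing that the $\bar r_i$ satisfy one nontrivial relation does not by itself let you drop a normal generator --- that inference is exactly the ``relation gap'' issue which is open for discrete presentations (cf.\ the paper's citation of Gruenberg after the lemma). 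To make your module-theoretic upgrade rigorous you need two inputs you have not supplied: (a) a theorem identifying the minimal number of normal generators of $\bar N$ in $(\hat F_{m-1})^{\bS}$ with a computable invariant --- and the relation module $\bar N^{\mathrm{ab}}$ is not that invariant, since $\bar N$ may have non-abelian chief factors as an $(\hat F_{m-1})^{\bS}$-group which $\bar N^{\mathrm{ab}}$ does not see; the correct invariant is the product of irreducible factors of $\bar N/M$ ($M$ the intersection of the maximal proper normal subgroups of $\bar N$ normal in $(\hat F_{m-1})^{\bS}$), as in Lemma~\ref{L:irred} and Theorem~\ref{T:probfrommult} --- and (b) a Crowell/Schanuel comparison in the pro-$\bS$ category. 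The Fox-derivative sequence you invoke exists for presentations over a \emph{free} profinite group; to use it here you would have to lift to the profinite presentation of $H$ over $\hat F_m$, whose kernel is normally generated by the $r_i$ \emph{together with} normal generators of $\ker(\hat F_m\to(\hat F_m)^{\bS})$, which destroys the count of $m+u$, while for the pro-$\bS$ kernel itself no such four-term sequence is available off the shelf. Your plan is essentially Lubotzky's proof of the case where $S$ is all finite groups, and points (a) and (b) are precisely where his argument uses freeness of $\hat F$ and does not carry over verbatim.

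The paper avoids all of this with a probabilistic argument: take a counterexample with $m$ minimal, so $\mu_{u,m}(U_{S,H})>0$ while $\mu_{u,m-1}(U_{S,H})=0$. Positivity of the product in Equation~\eqref{E:fixedn} at $n=m$ gives $h_H(G)^{m(S,m,H,G)-1}|G|^{-m-u}\leq h_H(G)^{-1}$ for all abelian $G$; vanishing at $n=m-1$ forces $h_H(G)^{m(S,m-1,H,G)-1}|G|^{-m+1-u}\geq 1$ for some abelian $G$; and the monotonicity of $(h_H(G)^{m(S,n,H,G)}-1)|G|^{-n}$ in $n$, read off from the surjection counts in Corollary~\ref{C:finalmab} and Proposition~\ref{P:countfromF}, makes these two bounds contradict each other. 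In effect, the multiplicity formulas of Section~\ref{S:basics} already encode the Schanuel-type comparison you are after, so the cleanest way to salvage your route is to replace the relation-module step by a direct comparison of $m(S,m,H,G)$ and $m(S,m-1,H,G)$ via those formulas.
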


This is the same as the situation when $S$ is the set of all profinite groups and $H$ is a finite group (see \cite[Theorem 0.1]{Lubotzky2001}), but contrasts to the more general situation of presenting $H$ as a finite group, where the analog is a long-standing open question (see \cite[Lecture 1: Question 3]{Gruenberg1976}).

\begin{proof}
Suppose for the sake of contradiction that we have a counterexample, and consider one with $m$ minimal.  We have that
\begin{eqnarray*}
&& \mu_{u,m}(U_{S,H})\\
&=& \frac{|\Sur(\hat{F}_m,H)|}{|\Aut(H)||H|^{m+u}}\prod_{\substack{G \in \mathcal{A}_H}} 
\prod_{k=0}^{m(S,m,H,G)-1} (1-\frac{h_H(G)^k}{ |G|^{{m+u}}})
 \prod_{\substack{ G\in \mathcal{N}}}  (1-|G|^{-{m-u}})^{m(S,m,H,G)} \\
 &>& 0.
\end{eqnarray*}
In particular, since $|G|$ is a power of $h_H(G)$ this implies that for $G\in \mathcal{A}_H$, we have
$$
h_H(G)^{m(S,m,H,G)-1}|G|^{-m-u}\leq h_H(G)^{-1}.
$$
However, since we have a minimal counterexample, we have that $m>d$ and
\begin{eqnarray*}
&&\mu_{u,m-1}(U_{S,H})\\
&=& \frac{|\Sur(\hat{F}_{m-1},H)|}{|\Aut(H)||H|^{m-1+u}}\prod_{\substack{G \in \mathcal{A}_H}} 
\prod_{k=0}^{m(S,m-1,H,G)-1} (1-\frac{h_H(G)^k}{ |G|^{{m-1+u}}})
 \prod_{\substack{ G\in \mathcal{N}}}  (1-|G|^{-m+1-u})^{m(S,m-1,H,G)}\\
 &=&0.
\end{eqnarray*}
By the final statement of Theorem~\ref{T:calc}, we have that one of the factors is $0$.
Since $m>d$, we have $|\Sur(\hat{F}_{m-1},H)|\ne 0$.  If $H$ is the trivial group, the lemma is clear.  Thus we can assume $d\geq 1$ and $m\geq 2$, %.  If $m-1+u=0$, then $m=1$ and $m(S,m-1,H,G)=0$, 
and so for $G\in\mathcal{N}$ we have $(1-|G|^{-m+1-u})^{m(S,m-1,H,G)}>0$.
Thus, for some $G\in \mathcal{A}_H$, we have 
\begin{equation}\label{eq:001}
h_H(G)^{m(S,m-1,H,G)-1}|G|^{-m+1-u}\geq 1.
\end{equation}

If $\rho_n : (\hat{F}_n)^{\bar{S}} \ra H$ is a surjection, we have
\begin{align*}
(h_H(G)^{m(S,n,H,G)}-1)|G|^{-n}
&= (h_H(G)-1) \sum_{\substack{\textrm{isom. classes of $H$-extensions $(E,\pi)$}\\ \textrm{$\ker \pi \simeq G$}
\\ \textrm{$\ker \pi$ irred. $E$-group} \\
\textrm{$E$ is level $S$} }} \frac{|\Sur(\rho_n, \pi)|}{|\Aut_H(E,\pi)||G|^n} .
\end{align*}
Any surjection from $\rho_n$ to $\pi$ can be extended to a surjection from $\rho_{n+1}$ to $\pi$ in $|G|$ ways.
So, $(h_H(G)^{m(S,n,H,G)}-1)|G|^{-n}$ is non-decreasing in $n$.
So we have
\begin{equation}\label{eq:002}
\frac{h_H(G)^{m(S,m,H,G)}-1}{|G|^{m}} \geq \frac{h_H(G)^{m(S,m-1,H,G)}-1}{|G|^{m-1}} ,
\end{equation}
and then we have
\begin{align*}
|G|^uh_H(G)&\leq h_H(G)^{m(S,m-1,H,G)}|G|^{-m+1} \\
&\leq h_H(G)^{m(S,m,H,G)}|G|^{-m} +|G|^{-m+1}-|G|^{-m} \\
&\leq |G|^u +|G|^{-m+1}-|G|^{-m},
\end{align*}
where the first and the last inequalities follow by \eqref{eq:001} and the second one follows by \eqref{eq:002}.
Since $m>d$ and $H$ can be generated by $d$ generators, the number of relations $m+u$ has to be positive. From above, we have $|G|^{m+u}(h_H(G)-1)\leq(|G|-1)$. Then this is a contradiction, since  $h_H(G)\geq 2$.
\end{proof}

Lemma~\ref{L:present} leads to the following definition.

\begin{definition}
Let $S$ be a finite set of finite groups and $u$ be an integer.  We call a finite group $H$ with generator rank $d$ \emph{achievable} (with $S$ and $u$ implicit) if it can be generated as a pro-$\bar{S}$ group with $d$ generators and $d+u$ relations.
\end{definition}

\begin{proposition}\label{P:prob0}
Let $S$ be a finite set of finite groups and $u$ be an integer.  Then for a finite group $H$ we have
that
$\mu_u(U_{S,H})>0$ if $H$ is achievable and $\mu_u(U_{S,H})=0$ otherwise.
\end{proposition}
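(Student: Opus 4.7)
The plan is to translate ``$\mu_{u,n}(U_{S,H})>0$'' into the concrete statement that $H$ admits a pro-$\bS$ presentation on $n$ generators and $n+u$ relations, to apply Lemma~\ref{L:present} to bootstrap such presentations down to the generator rank $d$ of $H$, and then to read positivity of the limit directly from the factors in Equation~\eqref{E:lim}. Since $S$ is finite, $(\hF_n)^{\bS}$ is finite, so the first step is to observe that $\mu_{u,n}(U_{S,H})>0$ if and only if there exist elements $r_1,\dots,r_{n+u}\in(\hF_n)^{\bS}$ with $(\hF_n)^{\bS}/[r_1,\dots,r_{n+u}]_{(\hF_n)^{\bS}}\simeq H$, which is exactly the presentation condition.

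For the easy direction, I would suppose $H$ is not achievable and let $d$ be its generator rank. For $n<d$ one has $|\Sur(\hF_n,H)|=0$ and hence $\mu_{u,n}(U_{S,H})=0$; for $n\geq d$, Lemma~\ref{L:present} in contrapositive form says $H$ has no pro-$\bS$ presentation on $n$ generators with $n+u$ relations, so again $\mu_{u,n}(U_{S,H})=0$. Taking $n\to\infty$ yields $\mu_u(U_{S,H})=0$.

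For the other direction, I would assume $H$ is achievable. Adding a fresh generator together with the relation that kills it, iterated from a presentation on $d$ generators and $d+u$ relations, produces for every $n\geq d$ a presentation on $n$ generators and $n+u$ relations, so $\mu_{u,n}(U_{S,H})>0$ for all $n\geq d$. Equation~\eqref{E:fixedn} then forces every factor to be strictly positive; in particular for each $G\in\mathcal{A}_H$ with $m(S,n,H,G)>0$ one obtains $h_H(G)^{m(S,n,H,G)-1}<|G|^{n+u}$, which rearranges to $h_H(G)^{m(S,n,H,G)}/|G|^n<h_H(G)|G|^u$. By Remark~\ref{R:lamint} the left-hand side stabilizes to a literal equality with $\lambda(S,H,G)$ for all sufficiently large $n$, giving the crucial strict inequality $\lambda(S,H,G)<h_H(G)|G|^u$. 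This makes every factor of the infinite product over $\mathcal{A}_H$ in Equation~\eqref{E:lim} strictly positive, and the geometric tail $\sum_i \lambda(S,H,G)h_H(G)^{-i}|G|^{-u}$ guarantees convergence to a positive real.

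The remaining factors in Equation~\eqref{E:lim}, namely the exponentials $e^{-|G|^{-u}\lambda(S,H,G)}$ for $G\in\mathcal{N}$ and the prefactor $(|\Aut(H)||H|^u)^{-1}$, are manifestly positive; by Remark~\ref{R:finprod} all but finitely many factors in both products are trivially $1$, so the overall product is a finite product of positive reals times one convergent positive infinite product per $G\in\mathcal{A}_H$. This gives $\mu_u(U_{S,H})>0$. The main subtlety I anticipate is the transfer from strict positivity at each finite $n$ to a strict bound on $\lambda(S,H,G)$, and this is precisely handled by the stabilization in Remark~\ref{R:lamint}; without it one would only obtain $\lambda(S,H,G)\leq h_H(G)|G|^u$, which is not enough to rule out a vanishing factor at $i=1$.
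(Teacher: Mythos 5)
Your proof is correct and takes essentially the same route as the paper's: both directions rest on Lemma~\ref{L:present}, the padding of a presentation by trivial generators and killing relations, and the stabilization of $h_H(G)^{m(S,n,H,G)}|G|^{-n}$ to $\lambda(S,H,G)$ from Remark~\ref{R:lamint}, which converts positivity of each $\mu_{u,n}(U_{S,H})$ into the strict bound $\lambda(S,H,G)<h_H(G)|G|^{u}$. The only difference is organizational: you argue the ``achievable implies positive'' direction directly, whereas the paper runs the contrapositive by assuming $\mu_u(U_{S,H})=0$ and deriving a contradiction.
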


So given $u$, our measure $\mu_u$ is supported on those groups in $\mathcal{P}$ whose pro-$\bar{S}$ completion is achievable (for $u,S$) for every finite set $S$ of finite groups.
Note that given $S$, any finite pro-$\bar{S}$ group $H$ is achievable for $u$ sufficiently large. 
 
\begin{example}
From \cite[Theorem A]{Guralnick2007}, we have that every finite simple group can be presented as a profinite group with $2$ generators and $18$ relations.
Thus if $u\geq 16$ and $S$ is a finite set of finite groups with $H\in \bS$ a simple group, then $H$ is achievable.
\end{example}

\begin{example}
If $S$ is the set of all groups of order $32$ and $u\leq 0$, we can see that $H=\Z/2\Z \times \Z/2\Z$ is not achievable.  To obtain $H$ as a quotient of $F_2$, it is easy to compute we need at least 3 relations (for both generators to be order $2$ and for them to commute with each other).
\end{example}

\begin{remark}
Proposition~\ref{P:prob0} need not hold for infinite $S$.  For example, if $S$ is the set of all finite abelian groups, then any finite abelian group $H$ can be presented as an abelian group with $n$ generators and $n$ relations, but $\mu_0(V_{S,H})=0$.  (See Example~\ref{E:abelian}.)  This is because the product over $G\in \mathcal{A}_H$ is contains factors $(1-p^{-1})$ for each prime $p$ and thus is $0$ even though no individual factor is $0$.  Further, if $S$ is the set of all groups and $H\in\mathcal{P}$, we have $\mu_0(V_{S,H})\leq \mu_0(V_{\textrm{\{abelian groups\}},H^{ab}}) =0$.  Some of those groups $H$ can be profinitely presented with $n$ generators and $n$ relations.
It is an interesting open question to understand in general for which infinite $S$ and finite $H$ does the product in Equation~\ref{E:lim} give $\mu_u(V_{S,H})=0$ even when none of the factors in the product is $0$.
\end{remark}

\begin{proof}[Proof of Proposition~\ref{P:prob0}]
By Lemma~\ref{L:present}, if $H$ is not achievable, then $\mu_{u,n}(U_{S,H})=0$ for all $n$ and hence $\mu_u(U_{S,H})=0$.
Suppose that $\mu_u(U_{S,H})=0$.  
Then using Theorem~\ref{T:calc} and Remark~\ref{R:finprod}, we must have that one of the factors in
\begin{eqnarray*}
%&&
%\lim_{n\ra\infty} \Prob((X_{u,n})^{\bS}\isom H)\\
%&=&
\frac{1}{|\Aut(H)||H|^{u}}\prod_{\substack{G \in \mathcal{A}_H}} 
\prod_{i=1}^{\infty} (1-\lambda(S,H,G)
\frac{h_H(G)^{-i}}{ |G|^{{u}}})
 \prod_{\substack{ G\in \mathcal{N}}}  e^{-|G|^{-u}\lambda(S,H,G)}.\nonumber
\end{eqnarray*}
is $0$, i.e. for some $G\in \mathcal{A}_H$ we have $\lambda(S,H,G)h_H(G)^{-1}|G|^{-u}\geq 1$.  Recall by Remark~\ref{R:Gpow} that $|G|$ is a power of $h_H(G)$ and thus so is $\lambda(S,H,G)$.  In fact, for sufficiently large $n$, we have $\lambda(S,H,G)=h_H(G)^{m(S,n,H,G)}|G|^{-n}$.
Thus, for sufficiently large $n$, we have $h_H(G)^{m(S,n,H,G)-1}\geq |G|^{n+u}$ and $\mu_{u,n}(U_{S,H})=0$.
However if we can present $H$ as a pro-$\bar{S}$ group with $d$ generators and $d+u-k$ relations with $k\geq 0$, we can add $m$ generators for any $m$ and $m+k$ relations to trivialize those generators, to present $H$ with $d+m$ generators and $d+m+u$ relations for all $m\geq 0$, which implies $\mu_{u,n}(U_{S,H})>0$ for $n$ sufficiently large.

\end{proof}

\section{Comparision to non-profinite groups}\label{S:non-profinite}
Let $Y_{u,n,\ell}$ be $F_n$ modulo $n+u$ random relations uniform from words of length at most $\ell$.    In this section, we will compare this model to our $X_{u,n}$.  To put the groups on the same footing,  we take the profinite completions $\hat{Y}_{u,n,\ell}$ of the $Y_{u,n,\ell}$.  Alternatively, we could enlarge our measure space to include non-profinite groups, with the same definition of basic opens.  Since our topology would not separate groups with the same profinite completion, we might as well consider only the profinite completions.  (Note by \cite{Ollivier2011} and \cite{Agol2013},  at density $<1/6$, these groups are asymptotically almost surely residually finite and thus inject into their profinite completions.)

The following is almost the same as \cite[Lemma 4.4]{Dunfield2006}, but we include it here for completeness.
\begin{proposition}\label{P:same}
Given integers $n,u$, we have that the distributions $\nu_{u,n,\ell}$ of the $\hat{Y}_{u,n,\ell}$ weakly converge to $X_{u,n}$.
\end{proposition}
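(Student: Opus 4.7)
The plan is to reduce weak convergence to an equidistribution statement on a finite group, following the same Fatou-based strategy that proves Theorem~\ref{T:Main}. Every open set in $\mathcal{P}$ is a countable disjoint union of basic opens $U_{S,H}$, so by Fatou's lemma it suffices to show that for every finite set $S$ of finite groups and every finite level-$S$ group $H$,
\[
\lim_{\ell\to\infty}\nu_{u,n,\ell}(U_{S,H})=\mu_{u,n}(U_{S,H}),
\]
where $\mu_{u,n}$ denotes the distribution of $X_{u,n}$; from this one deduces $\liminf_{\ell\to\infty}\nu_{u,n,\ell}(U)\ge \mu_{u,n}(U)$ for every open $U\subseteq\mathcal{P}$, which, as in the proof of Theorem~\ref{T:Main}, is equivalent to weak convergence.

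First I would fix such $S$ and $H$ and set $G:=(\hat F_n)^{\bar S}$, a finite group by \cite[Cor.~15.72]{Neumann1967}. Writing $\pi\colon F_n\to G$ for the natural map, the event $\hat Y_{u,n,\ell}\in U_{S,H}$ depends only on the images $\pi(w_1),\ldots,\pi(w_{n+u})\in G$, where $w_1,\dots,w_{n+u}$ are the i.i.d.\ uniform reduced words of length $\le\ell$ defining $Y_{u,n,\ell}$; similarly, $X_{u,n}\in U_{S,H}$ depends only on $n+u$ i.i.d.\ uniform samples from $G$ (the pushforward of Haar measure on $\hat F_n$ to its finite quotient $G$). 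By independence of the relations, the problem reduces to the single-relation statement: the pushforward to $G$ of the uniform measure on reduced words in $F_n$ of length $\le\ell$ converges, in total variation, to the uniform measure on $G$ as $\ell\to\infty$.

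This final assertion is an equidistribution statement for the non-backtracking random walk on the Cayley graph $\mathrm{Cay}(G,\pi(\{x_i^{\pm 1}\}))$, a finite $2n$-regular connected graph---connected because the $x_i$ topologically generate $\hat F_n$ and hence $\pi(x_i)$ generate $G$. For $n\ge 2$ I would invoke the standard mixing statement: the non-backtracking transition operator has Perron eigenvalue $2n-1$ (with eigenvector proportional to the constants) and a spectral gap $\rho<2n-1$ on its orthogonal complement, from which the number of reduced words of length exactly $\ell$ with $\pi(w)=g$ equals $(2n-1)^\ell/|G|+O(\rho^\ell)$ uniformly in $g\in G$; summing over lengths $\le\ell$ and normalizing yields the claim, and the $n=1$ case reduces to an elementary calculation on a finite cyclic quotient. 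This is essentially the content of \cite[Lemma~4.4]{Dunfield2006}, and I expect the main obstacle to be verifying the spectral gap at the level of generality needed---arbitrary finite Cayley graphs of quotients of $\hat F_n$---though this can be cleanly sidestepped by quoting the Dunfield--Thurston argument directly. Once the single-relation equidistribution is in hand, independence of the $n+u$ relations promotes it to joint convergence on $G^{n+u}$, which delivers $\nu_{u,n,\ell}(U_{S,H})\to \mu_{u,n}(U_{S,H})$ and completes the proof.
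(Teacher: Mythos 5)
Your proposal is correct and follows essentially the same route as the paper: reduce weak convergence to convergence on each basic open $U_{S,H}$ via Fatou (as in the proof of Theorem~\ref{T:Main}), observe that everything factors through the finite quotient $(\hat F_n)^{\bar S}$, and conclude from equidistribution of the image of a uniform word of length at most $\ell$. The only difference is cosmetic: where you spell out the non-backtracking walk and its spectral gap, the paper simply cites the fundamental convergence theorem for irreducible aperiodic finite Markov chains (and notes the statement is essentially \cite[Lemma 4.4]{Dunfield2006}), so your extra care there is fine but not needed.
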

\begin{proof}
As in the proof of Theorem~\ref{T:Main}, it suffices to show that for each finite group $H$ and finite set $S$ of groups that
$$
\lim_{\ell\ra\infty} \nu_{u,n,\ell}(U_{S,H})=\mu_{u,n}(U_{S,H}).
$$
Thus we are asked to compare the quotient of the finite group $(\hat{F}_n)^{\bar{S}}$ %(see Lemmas~\ref{L:sfinite} and \ref{L:redrel}) 
by the image of random uniform words of length at most $\ell$ versus by uniform random relators.  However as $\ell\ra\infty$ the image of a random uniform words of length at most $\ell$ converges to the uniform distribution on $(\hat{F}_n)^{\bar{S}}$, by the fundamental theorem on irreducible, aperiodic finite state Markov chains \cite[Theorem 6.6.4]{Durrett2010}.
\end{proof}

Next we see that taking a number of relations that is going to infinity always gives groups weakly converging to the trivial group in our topology.  This includes all positive density Gromov random groups as well as plenty of density $0$ random groups.

\begin{proposition}\label{P:alltrivial}
Let $u(\ell)$ be an integer valued function of the positive integers that goes to $\infty$ as $\ell\ra\infty$.
Then $\nu_{u(\ell),n,\ell}$ weakly converge to the probability measure supported on the trivial group as $\ell\ra\infty$.
\end{proposition}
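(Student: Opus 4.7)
The plan is to reduce weak convergence to checking that, for every finite set $S$ of finite groups, $\nu_{u(\ell),n,\ell}(U_{S,1}) \to 1$, where $1$ denotes the trivial group. Indeed, since the $U_{S,1}$ form a neighborhood basis of the trivial group in $\mathcal{P}$ and the only open set a point-mass assigns measure $1$ to is any open set containing that point, by the portmanteau characterization of weak convergence it suffices to show $\nu_{u(\ell),n,\ell}(U_{S,1}) \to 1$ for each finite $S$.

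Fix such an $S$, and let $G := (\hat{F}_n)^{\bar{S}}$, which is a finite group by \cite[Cor. 15.72]{Neumann1967}. The event $(\hat{Y}_{u(\ell),n,\ell})^{\bar{S}} \simeq 1$ is the event that the images in $G$ of the $n+u(\ell)$ random length-$\leq \ell$ words normally generate $G$. The relations fail to normally generate $G$ if and only if they are all contained in some maximal proper normal subgroup of $G$, so by the union bound
\[
1 - \nu_{u(\ell),n,\ell}(U_{S,1}) \leq \sum_{N} p_N(\ell)^{n+u(\ell)},
\]
where the sum runs over the (finitely many) maximal proper normal subgroups $N$ of $G$, and $p_N(\ell)$ is the probability that a single uniformly random word of length at most $\ell$ in $F_n$ maps into $N \subset G$.

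The main technical input is the same Markov-chain mixing fact used in Proposition~\ref{P:same}: the image in $G$ of a uniformly random word of length $\leq \ell$ converges in distribution to the uniform measure on $G$ as $\ell\to\infty$, by the convergence theorem for irreducible aperiodic finite Markov chains \cite[Theorem 6.6.4]{Durrett2010}. Hence $p_N(\ell)\to |N|/|G|$, and since each such $N$ is a proper subgroup we have $|N|/|G|\leq 1/2$. So there exist $\ell_0$ and $c<1$ (depending only on $S$ and $n$) such that $p_N(\ell)\leq c$ for every maximal proper normal subgroup $N$ of $G$ and every $\ell\geq \ell_0$.

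The main obstacle—such as it is—is only a bookkeeping one: we must combine a bound on $p_N(\ell)$ that requires $\ell$ large with the growing exponent $n+u(\ell)$. But since the number of maximal proper normal subgroups is a fixed finite quantity $m(G)$ and $u(\ell)\to\infty$, for $\ell\geq \ell_0$ we obtain
\[
1 - \nu_{u(\ell),n,\ell}(U_{S,1}) \leq m(G)\cdot c^{n+u(\ell)} \longrightarrow 0
\]
as $\ell\to\infty$. (If $G$ is already trivial the probability equals $1$ trivially.) This gives the required convergence for each basic open neighborhood of the trivial group, completing the proof.
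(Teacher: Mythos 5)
Your proof is correct, but it takes a genuinely different route from the paper's. The paper fixes an integer $v$, uses the monotonicity $\Prob(\hat{Y}_{u(\ell),n,\ell}\twoheadrightarrow H)\leq \Prob(\hat{Y}_{v,n,\ell}\twoheadrightarrow H)$ for $u(\ell)\geq v$, passes to the limit in $\ell$ via Proposition~\ref{P:same}, and then lets $v\to\infty$ using a first-moment bound $\E|\Sur(X_{v,n},H)|\leq |H|^{-v}$ for non-trivial $H$ together with Equation~\eqref{E:trivialgp} (whose convergence to $1$ as $v\to\infty$ rests on the classification of finite simple groups) for the trivial group. You instead work directly in the finite group $(\hat{F}_n)^{\bar{S}}$ and run a union bound over its maximal proper normal subgroups, combining the Markov-chain mixing of random words (the same input as Proposition~\ref{P:same}, but used directly rather than through that proposition) with the growing exponent $n+u(\ell)$. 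Your reduction to the sets $U_{S,1}$ is valid: they form a neighborhood basis of the trivial group, and $\nu_{u(\ell),n,\ell}(U_{S,1})\to 1$ forces $\nu_{u(\ell),n,\ell}(U_{S,H})\to 0$ for $H\not\simeq 1$ as well, so the portmanteau condition the paper uses as its definition of weak convergence is verified. What your approach buys is self-containment and effectivity — an explicit rate $m(G)\,c^{\,n+u(\ell)}$, no appeal to the limiting measures $\mu_{v,n}$, to Equation~\eqref{E:trivialgp}, or to the classification of finite simple groups; what the paper's approach buys is that it exercises and showcases the machinery already built (the measures $X_{v,n}$, their moments, and the explicit formula for the probability of triviality), tying the proposition to the main results of the paper.
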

\begin{proof}
Fix a finite set $S$ of finite groups and a finite group $H$.
Fix an integer $v$.  For $u(\ell)\geq v$, we have that 
$$
\Prob(\hat{Y}_{u(\ell),n,\ell} \textrm{ has a surjection to $H$})\leq \Prob(\hat{Y}_{v,n,\ell} \textrm{ has a surjection to $H$}).
$$
Since the set of groups with a surjection to $H$ is open and closed by Proposition~\ref{P:same}, we have that
$$
\lim_{\ell\ra\infty} \Prob(\hat{Y}_{v,n,\ell} \textrm{ has a surjection to $H$}) =\Prob(X_{v,n} \textrm{ has a surjection to $H$}).
$$
It is easy to see using the approach of our paper that
$$
\E(|\Sur(X_{v,n},H|)=\frac{|\Sur(F_n,H)|}{|H|^{n+v}}\leq |H|^{-v}.
$$
Thus
$$
\limsup_{\ell\ra \infty} \Prob(\hat{Y}_{u(\ell),n,\ell} \textrm{ has a surjection to $H$})
\leq |H|^{-v}
$$
for every $v$, and so $\lim_{\ell\ra \infty} \Prob(\hat{Y}_{u(\ell),n,\ell} \textrm{ has a surjection to $H$})=0$.
Thus, for every $U_{S,H}$ with $H$ non-trivial, we have that 
$$\lim_{\ell\ra \infty} \nu_{u(\ell),n,\ell}(U_{S,H})=0.$$

For $u(\ell)\geq v$, we have that 
$$
\Prob(\hat{Y}_{u(\ell),n,\ell}^{\bar{S}} \textrm{ trivial}) \geq \Prob(\hat{Y}_{v,n,\ell}^{\bar{S}} \textrm{ trivial}).
$$
By Proposition~\ref{P:same}, we have that
$$
\lim_{\ell\ra\infty}(\Prob(\hat{Y}_{v,n,\ell}^{\bar{S}} \textrm{ trivial}))=\Prob(X_{v,n}^{\bar{S}} \textrm{ trivial}).
$$
So
$$
\liminf_{\ell\ra\infty} \Prob(\hat{Y}_{u(\ell),n,\ell}^{\bar{S}} \textrm{ trivial}) \geq \limsup_{v\ra\infty} \Prob(X_{v,n}^{\bar{S}} \textrm{ trivial}).
$$
From Equation~\eqref{E:trivialgp}, we have that $\limsup_{v\ra\infty} \Prob(X_{v,n}^{\bar{S}} \textrm{ trivial})=1$.
(We can control the size of the product in Equation~\eqref{E:trivialgp}, for example, by using the fact that there are at most 2 finite simple groups of any particular order.) 
Thus, for every $U_{S,1}$, we have that 
$$\lim_{\ell\ra \infty} \nu_{u(\ell),n,\ell}(U_{S,1})=1.$$
\end{proof}

\begin{remark}\label{R:alltrivial}
Proposition~\ref{P:alltrivial} might seem surprising at first.   The groups $Y_{u(\ell),n,\ell}$ are plenty interesting as $\ell\ra\infty$.  In particularly they are asymptotically almost surely infinite at density $<1/2$ \cite{Gromov1993}, 
%Theorem 11 of \cite{Ollivier2005}
and residually finite at density $<1/6$ \cite{Ollivier2011,Agol2013}, and so have many  finite quotients.  The above shows that those quotients are escaping off to infinity, however.  %See \cite{Ollivier2005} for an introduction of the many properties of these groups that have been studied and their rich structure.  
Just as a very interesting sequence of numbers might go to $0$, an interesting sequence of random groups can converge to the trivial group.  A better analogy might be that a sequence of integers with interesting asymptotic growth that goes to $0$ $p$-adically.  This shows that, at low densities, the weak convergence of $\nu_{u(\ell),n,\ell}$  in $\ell$ is not as strong
as the convergence of the $\mu_{u,n}$ in $n$ that we see in Corollary~\ref{C:coninfS}.  In particular 
$$
\lim_{\ell\ra\infty}\nu_{u(\ell),n,\ell}(\textrm{trivial group})=0\ne 1.
$$

\end{remark}

%-----------------------------------------------------------------------
\section{List of notations appearing in multiple sections}\label{S:index}
%-----------------------------------------------------------------------

%\subsection*{Part~\ref{pt:general-conj}}

%Section~\ref{S:Notations}
%\vspace{1mm}

\begin{longtable}{|l|l|l|}
\hline
Notation & \S & Description \\
\hline
$\hF_n$ & \ref{S:intro} &  Free profinite group on $n$ generators \\
%\hline
$X_{u,n}$ & \ref{S:intro} & Random group with $n$ generators and $n+u$ Haar relators \\
$\mathcal{P}$ & \ref{S:intro} & $\left\{ \text{isom. cl. of profinite groups $G$} \,\Bigg|\, \begin{aligned}
	& |G^{\bS}|<\infty\, \forall \text{ finite set}\\
	& \text{$S$ of finite groups}
	\end{aligned} \right\}$\\
%\hline
$U_{S,H}$ & \ref{S:intro} & Basic open sets: $\{G\in \mathcal{P} \mid G^{\bS} \isom H\}$ for finite set $S$\\
$\mu_{u,n}$ & \ref{S:intro}, \ref{S:definemu} & distribution of $X_{u,n}$\\
%\hline
 $\mu_u$ & \ref{S:intro}, \ref{S:definemu} & probability measure given explicitly, limit of $\mu_{u,n}$\\
%\hline
%\hline
$\Cen_G(H)$ & \ref{SS:2.1} & Centralizer of a subgroup $H$ of $G$ \\
%\hline
$\Hom_F(G_1,G_2)$ & \ref{SS:2.2} & $F$-group homorphisms $G_1\to G_2$\\
%\hline
$\Sur_F(G_1, G_2)$ & \ref{SS:2.2} & $F$-group surjections $G_1\to G_2$\\
%\hline
$h_F(G)$ & \ref{SS:2.2} & $|\Hom_F(G,G)|$\\
%\hline
$[x_1, \cdots]_F$ & \ref{SS:2.2} & The closed normal $F$-subgroup of $G$ generated by $x_k$\\
%\hline
$(E, \pi)$ & \ref{SS:2.3} & $H$-extension $\pi: E\to H$\\
%\hline
$\Sur_H(\pi, \pi')$  & \ref{SS:2.3} & Surjections between $H$-extensions\\
%\hline
$\Aut_H(E, \pi)$ & \ref{SS:2.3} & Automorphism of an $H$-extensions $(E, \pi)$\\
%\hline
$\bar{S}$ & \ref{SS:proSdef} & Variety of groups generated by the set $S$\\
%\hline
$G^{\bar{S}}$ & \ref{SS:proSdef} & The pro-$\bar{S}$ completion of $G$\\
%\hline
$S_\ell$ & \ref{SS:proSdef} & $\{\text{isom. cl. of groups }G \mid |G|\leq \ell\}$\\
%\hline
%\hline
$\mathcal{A}$ & \ref{S:definemu} & The algebra of sets generated by basic opens $U_{S,H}$\\
%\hline
$\mathcal{A}_H$ & \ref{SS:basicq} & \{isom. cl. of non-trivial finite abelian irreducible $H$-groups\}\\
%\hline
$\mathcal{N}$ & \ref{SS:basicq} & \{isom. cl. of groups $G^j$ for nonabelian simple $G$ and $j\in \Z_{>0}$\}\\
%\hline
$\lambda(S,H,G)$ & \ref{SS:basicq} & Values defined for given $S$, $H$ and $G$\\
%\hline
$1\to R\to F\to H \to 1$ & \ref{S:setup} & Fundamental short exact sequence\\
%\hline
$\rho_M$ & \ref{S:factors} & $G\to \Aut(M)$ for a minimal normal subgroup $M$ of $G$\\
%\hline
$\CF(G)$, $\CF(T)$ & \ref{S:factors} & Chief factor pairs of a group $G$ or a set $T$\\
%\hline
$m(S,n,H,G)$ & \ref{S:basics} & Multiplicity of $G$ in $R$, see Section \ref{S:arbitraryS} for infinite $S$\\
%\hline
 $\mathcal{E}_H$ & \ref{S:basics} &poset of $H$-extensions\\
$\nu(D,E)$ & \ref{S:basics} & M\"obius function on a poset of $H$-extensions\\
%\hline
$V_{S,H}$ & \ref{S:arbitraryS} & $\{G\in \mathcal{P} \mid G^{\bS} \isom H\}$ for arbitrary set $S$\\
\hline
\end{longtable}
\bigskip

%Section~\ref{sec:blocks}
%\vspace{1mm}
%
%\begin{tabular}{|l|l|l|}
%\hline
%Notation & \S & Description \\
%\hline
%$\bk$ & \ref{ss:definitions-G} & Algebraically closed field of characteristic $p$ \\
%\hline
%\end{tabular}
%\bigskip

\subsection*{Acknowledgements} 
We thank Nigel Boston, Persi Diaconis, Tullia Dymarz, Benson Farb, Turbo Ho,  Peter Sarnak, Mark Shusterman, and Tianyi Zheng for helpful conversations.  We thank Goulnara Arzhantseva and the anonymous referee for feedback on an earlier version of this manuscript.
This work was done with the support of an American Institute of Mathematics Five-Year Fellowship, a Packard Fellowship for Science and Engineering, a Sloan Research Fellowship, a Vilas Early Career Investigator Award, and National Science Foundation grants DMS-1652116 and DMS-1301690.

\def\cprime{$'$}

\end{document}